\newcommand{\Qbar}{\overline \bQ}
\newcommand{\bA}{\mathbb{A}}
\newcommand{\bC}{\mathbb{C}}
\newcommand{\bG}{\mathbb{G}}
\newcommand{\bN}{\mathbb{N}}
\newcommand{\bP}{\mathbb{P}}
\newcommand{\bQ}{\mathbb{Q}}
\newcommand{\bZ}{\mathbb{Z}}
\newcommand{\AAA}{\mathbb{A}}
\newcommand{\CC}{\mathbb{C}}
\newcommand{\GG}{\mathbb{G}}
\newcommand{\PP}{\mathbb{P}}
\newcommand{\QQ}{\mathbb{Q}}
\newcommand{\RR}{\mathbb{R}}
\newcommand{\ZZ}{\mathbb{Z}}
\newcommand{\aE}{\tilde{\mathcal{E}}}
\newcommand{\cA}{\mathcal{A}}
\newcommand{\cE}{\mathcal{E}}
\newcommand{\cG}{\mathcal{G}}
\newcommand{\cH}{\mathcal{H}}
\newcommand{\cO}{\mathcal{O}}
\newcommand{\cP}{\mathcal{P}}
\newcommand{\cS}{\mathcal{S}}
\newcommand{\gGL}{\mathbf{GL}}
\newcommand{\gH}{\mathbf{H}}
\newcommand{\gM}{\mathbf{M}}
\newcommand{\gSL}{\mathbf{SL}}
\newcommand{\rM}{\mathrm{M}}
\DeclareMathOperator{\Aut}{Aut}
\DeclareMathOperator{\End}{End}
\DeclareMathOperator{\GL}{GL}
\DeclareMathOperator{\Hom}{Hom}
\DeclareMathOperator{\ord}{ord}
\DeclareMathOperator{\Spec}{Spec}
\DeclareMathOperator{\Tay}{Tay}
\newcommand{\id}{\mathrm{id}}
\newcommand{\iotaan}{{\iota\textrm{-}\mathrm{an}}}
\newcommand{\abs}[1]{\lvert #1 \rvert}
\newcommand{\powerseries}[2]{#1 [\![ #2 ]\!]}
\newcommand{\ov}{\overline}
\newcommand{\fullmatrix}[4]{\left( \begin{matrix} #1 & #2 \\ #3 & #4 \end{matrix} \right)}
\newcommand{\defterm}[1]{\textbf{#1}}
\newtheorem{lemma}{Lemma}[section]
\newtheorem{proposition}[lemma]{Proposition}
\newtheorem{theorem}[lemma]{Theorem}
\newtheorem{corollary}[lemma]{Corollary}
\newtheorem{conjecture}[lemma]{Conjecture}
\Crefname{conjecture}{Conjecture}{Conjectures} 
\theoremstyle{definition}
\newtheorem*{definition}{Definition}
\newtheorem{remark}[lemma]{Remark}
\newcounter{constant}
\newcommand{\newC}[1]{%
   \ifthenelse{\equal{#1}{*}} {%
      \stepcounter{constant} c_{\theconstant}%
   } {%
      \refstepcounter{constant} c_{\theconstant} \label{C:#1}%
   }%
}
\newcommand{\refC}[1]{c_{\ref*{C:#1}}}
\def\subsection{\@startsection{subsection}{2}%
\z@{.9\linespacing\@plus.5\linespacing}{-.5em}%
{\normalfont\bfseries}}
\title[Zilber--Pink in a product of modular curves]{Zilber--Pink in a product of modular curves assuming multiplicative degeneration}
\author{Christopher Daw}
\author{Martin Orr}
\address{Daw: Department of Mathematics and Statistics, University of Reading,
    White\-knights,  PO Box 220,  Reading,  Berkshire RG6 6AX,  United Kingdom}
\email{chris.daw@reading.ac.uk}
\address{Orr: Department of Mathematics, The University of Manchester, Alan Turing Building, Oxford Road, Manchester M13 9PL, United Kingdom}
\email{martin.orr@manchester.ac.uk}
\dedicatory{Dedicated to the memory of Susan Daw (1954--2022)}
\subjclass[2020]{11G18, 11G50, 14G35}
\begin{document}

\begin{abstract}
We prove the Zilber--Pink conjecture for curves in $Y(1)^n$ whose Zariski closure in $(\mathbb{P}^1)^n$ passes through the point $(\infty, \ldots, \infty)$, going beyond the asymmetry condition of Habegger and Pila.
Our proof is based on a height bound following André's G-functions method.
The principal novelty is that we exploit relations between evaluations of G-functions at unboundedly many non-archimedean places.
\end{abstract}

\maketitle

\vspace{-1.5em}

\section{Introduction}

One of the central problems of unlikely intersections is the {\bf Zilber--Pink conjecture} for Shimura varieties:

\begin{conjecture}\label{ZP}
Let $V$ be an irreducible subvariety of a Shimura variety $S$.  If $V$ is not contained in a special subvariety of positive codimension, then the intersection of $V$ with the union of all special subvarieties of~$S$ of codimension greater than $\dim(V)$ is not Zariski dense in $V$.
\end{conjecture}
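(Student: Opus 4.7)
The plan is to establish \Cref{ZP} by the Pila--Zannier strategy adapted to Shimura varieties by Pila, Habegger, Klingler--Ullmo--Yafaev, Daw--Ren, Daw--Orr, Gao, and others. After reformulating in the Habegger--Pila language of optimal subvarieties---which reduces the conjecture to showing that each $V \subseteq S$ contains only finitely many maximal atypical components---I fix an irreducible $V$ defined over a number field $K$, not contained in a proper special subvariety of $S$, together with a hypothetical Zariski dense sequence $(x_n) \subseteq V(\ov{\bQ})$ of intersection points lying in special subvarieties $T_n$ of codimension strictly greater than $\dim V$, aiming for a contradiction.

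The first ingredient is analytic. Let $\pi \colon D \to S$ be the uniformization by the Hermitian symmetric domain and $F \subseteq D$ a Siegel fundamental set. By Peterzil--Starchenko and Klingler--Ullmo--Yafaev, $\pi|_F$ is definable in $\Ranexp$, so $Z := (\pi|_F)^{-1}(V(\CC))$ is a definable set. The preimages in $F$ of the $T_n$ are cut out by algebraic equations whose complexity $\Delta(T_n)$---measured via arithmetic invariants of the associated Mumford--Tate subgroups---is controlled by Hodge theory.

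The second ingredient is arithmetic: one needs a polynomial height upper bound $H(\tilde x_n) \le c_1 \Delta(T_n)^{\alpha}$ on a preimage $\tilde x_n \in F$ of $x_n$, together with a polynomial Galois orbit lower bound $[K(x_n) : K] \ge c_2 \Delta(T_n)^{\beta}$. Granting both, the Pila--Wilkie counting theorem applied to $Z$ produces a positive-dimensional semi-algebraic block $B \subseteq Z$ meeting $\{\tilde x_n\}$ in infinitely many points. The hyperbolic Ax--Schanuel theorem (Mok--Pila--Tsimerman, with the Shimura-variety refinements of Gao and Klingler) then forces $\pi(B)$ to lie in a proper weakly special subvariety $W \subsetneq S$. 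An induction on $\dim V$---replacing $V$ by an irreducible component of $V \cap W$ carrying a dense sequence of atypical points---closes the argument.

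The principal obstacle is the height upper bound. Galois orbit lower bounds reduce to effective Mumford--Tate statements established for points by Tsimerman and under active development for higher-dimensional special subvarieties; Ax--Schanuel and the definability of $\pi|_F$ are now theorems. By contrast, polynomial height bounds are available only in restricted settings (modular and Shimura curves, and abelian-type varieties under strong degeneration hypotheses). The novelty of the present paper---deriving relations between G-function evaluations at unboundedly many non-archimedean places---indicates a route through the multiplicatively degenerating case; extending it requires handling arbitrary semistable degenerations, so that André's inequality continues to yield non-trivial arithmetic relations, and ultimately constructing the requisite G-functions for Shimura varieties outside the abelian type, which appears to be the deepest conceptual obstruction.
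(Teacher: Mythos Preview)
The statement you are attempting to prove is \cref{ZP}, which is the full Zilber--Pink conjecture for Shimura varieties. This is a \emph{conjecture}, not a theorem: the paper does not prove it, and indeed it remains wide open in general. The paper only establishes the very special case \cref{mainZP} (curves in $Y(1)^n$ that intersect infinity), and even there the bulk of the work goes into the Galois orbit bound \cref{galois-bound} via the height bound \cref{andre-bound}.

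Your proposal is not a proof but an outline of the Pila--Zannier strategy, and you yourself identify the gaps: the polynomial height bound and the Galois orbit lower bound are not known in general. Writing ``granting both'' and then later explaining that these are ``the deepest conceptual obstruction'' means you have not proved the statement---you have reduced it to inputs that are themselves open problems. In particular, the Galois lower bound you invoke is \emph{not} ``established for points by Tsimerman'' in the generality needed here (Tsimerman's result concerns CM points in $\cA_g$, not arbitrary atypical intersections in arbitrary Shimura varieties), and the height upper bound is precisely what this paper struggles to obtain even for curves in $Y(1)^n$ under a strong degeneration hypothesis. So the proposal should be read as a survey of the expected architecture of a proof, not as a proof.
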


An immediate corollary of Conjecture~\ref{ZP} is the Andr\'e--Oort conjecture: the special subvarieties of $S$ are characterised among the irreducible subvarieties of $S$ as those containing a Zariski dense set of special points. A proof of the Andr\'e--Oort conjecture was recently announced by Pila, Shankar and Tsimerman~\cite{PST+}, who, using work of Esnault and Groechenig, give height bounds for special points, thereby completing a new strategy of Binyamini, Schmidt and Yafaev~\cite{BSY}.

In \cite{HP12}, Habegger and Pila made the first progress on Conjecture~\ref{ZP} beyond the Andr\'e--Oort conjecture, proving it for so-called ``asymmetric curves'' in $Y(1)^n$.
Here $Y(1)$ denotes the level-one modular curve, which is isomorphic to the affine line $\AAA^1$.
An irreducible curve $C\subset Y(1)^n$ is called \defterm{asymmetric} if the restrictions of the coordinate functions on $Y(1)^n$ to $C$ have distinct degrees, up to at most one exceptional pair of coordinates, and ignoring coordinates whose restrictions to~$C$ are constant.
Habegger and Pila's theorem was as follows.

\begin{theorem} \cite[Theorem~1]{HP12} \label{HPthm}
Let $C$ be an irreducible asymmetric curve in $Y(1)^n$ defined over $\Qbar$.
If $C$ is not contained in a special subvariety of positive codimension, then the intersection of $C$ with the union of all special subvarieties of $Y(1)^n$ of codimension at least~$2$ is finite.
\end{theorem}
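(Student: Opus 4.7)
The plan is to follow the Pila--Zannier strategy. Let $j : \mathfrak{H} \to Y(1)(\bC)$ denote the modular $j$-function and fix a standard fundamental domain $F \subset \mathfrak{H}$ for $\SL_2(\bZ)$. By Peterzil--Starchenko, $j|_F$ is definable in $\Ranexp$, so the set $Z = \{\tau \in F^n : (j(\tau_1),\ldots,j(\tau_n)) \in C(\bC)\}$ is definable. A point $P \in C$ lies on a special subvariety of $Y(1)^n$ of codimension $\geq 2$ if and only if some preimage $\tau = (\tau_1,\dots,\tau_n) \in Z$ satisfies at least two independent conditions of the form ``$\tau_i$ is CM'' or ``$\tau_i = g\tau_j$ for some $g \in \GL_2^+(\bQ)$''. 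Each condition carries a complexity (a discriminant, or the denominator of~$g$) and is encoded by rational data.

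Assume for contradiction that infinitely many such intersection points exist on~$C$. Spreading out over the Galois orbit over a field of definition $K$ of~$C$, each conjugate lies on a special subvariety of the same type and complexity. The core is a dichotomy via the Pila--Wilkie counting theorem, applied to a definable family parametrising tuples $(\tau, g, \ldots)$ with $\tau \in Z$: either the Galois conjugates contribute only $o(T^\varepsilon)$ rational data points of height at most~$T$, or a positive-dimensional semi-algebraic block $B \subseteq Z$ arises along which one of the modular conditions holds identically. Iterating this twice, once for each of the two independent conditions present at every intersection point, yields two functional modular conditions holding identically on~$C$.

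The asymmetry hypothesis is used to contradict the existence of such functional conditions. A CM-type condition forces some coordinate $x_i|_C$ to be constant and equal to a singular modulus, placing $C$ inside a positive-codimension special subvariety. A modular-type condition $\Phi_N(x_i, x_j) = 0$ on~$C$ factors the map $(x_i, x_j) : C \to \bA^2$ through the modular curve $Y_0(N)$, and the equality of bidegrees of $\Phi_N$ then forces $\deg(x_i|_C) = \deg(x_j|_C)$ whenever both are non-constant; having two such conditions on distinct coordinate pairs contradicts asymmetry, and having two on the same pair once again places $C$ inside a positive-codimension special subvariety.

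To push the Pila--Wilkie dichotomy into its second alternative, two quantitative inputs are required: (i) a Galois orbit lower bound $[K(P):K] \gg \Delta^\delta$ for some $\delta>0$, where $\Delta$ is the complexity of the minimal special subvariety of codimension $\geq 2$ through $P$; and (ii) a polynomial bound on the height of the encoding rational data in terms of~$\Delta$, obtained by a direct analysis inside the fundamental domain. The main obstacle is ingredient~(i). The CM case follows from Siegel's class-number lower bound, but the modular-relation case, involving matrices $g \in \GL_2^+(\bQ)$ of unbounded determinant~$N$, requires a uniform Galois orbit lower bound for pairs of $N$-isogenous elliptic curves, polynomial in~$N$; this is where the bulk of the technical work lies, and it must be combined with a careful combinatorial case analysis over the possible partitions of the coordinates $\{1,\ldots,n\}$ into CM versus modular-related classes.
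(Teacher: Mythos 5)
This statement is quoted from Habegger--Pila ([HP12, Theorem~1]) and is not reproved in the paper, so the comparison is with the structure of their argument as recalled in Section~\ref{subsec:bounds-intro}. Your Pila--Zannier skeleton (Peterzil--Starchenko definability, Pila--Wilkie on a family parametrising the rational data, height control of that data, Galois orbit lower bounds) is the right frame, but there is a genuine gap in how you deploy the asymmetry hypothesis, and it sits exactly at the step you defer. The place where you invoke asymmetry --- ruling out a functional relation $\Phi_N(x_i,x_j)=0$ holding identically on $C$ by comparing bidegrees --- is vacuous: such an identical relation already places $C$ inside a special subvariety of positive codimension, contradicting the hypothesis of the theorem with no reference to degrees at all. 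So in your write-up the asymmetry condition does no real work, which cannot be right, since the theorem is false-in-spirit without it in the sense that nobody knows how to prove it unconditionally (that is the whole point of the present paper).

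Where asymmetry is actually needed is your ingredient~(i), the Galois orbit lower bound $[K(P):K] \gg \max\{M,N\}^{\delta}$ for points of $C$ satisfying the modular conditions, which you explicitly leave unproved (``this is where the bulk of the technical work lies''). Habegger and Pila obtain it (their Lemma~4.2) from the height bound $h(s_{i_1}) \leq c\max\{1,\log M\}$ --- essentially \cite[Thm.~1.1]{Hab10} --- combined with isogeny estimates of Masser--W\"ustholz type; and it is precisely the proof of that height bound, which compares degrees of the coordinate functions on $C$ under the modular correspondences with the behaviour of Faltings heights under isogeny, that requires the degrees $\deg(x_i|_C)$ to be pairwise distinct (up to one exceptional pair). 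Without supplying this height bound, the Pila--Wilkie dichotomy cannot be pushed into its second alternative and the argument does not close. As the present paper notes, removing or replacing this use of asymmetry is the hard open problem, addressed here by a G-function height bound under the ``intersects infinity'' hypothesis instead.
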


Removing the asymmetry condition in \cref{HPthm} has been a major challenge.
The main result of this article is \textbf{a proof of \cref{ZP} for a different class of curves in $Y(1)^n$},
namely, those that ``intersect infinity.''
We say that an irreducible curve in~$Y(1)^n \cong \AAA^n$ \defterm{intersects infinity} if its Zariski closure in $(\bP^1)^n$ contains the point $(\infty,\ldots,\infty)$.

\begin{theorem} \label{mainZP}
Let $C$ be an irreducible curve in $Y(1)^n$ which intersects infinity and is defined over $\Qbar$.
If $C$ is not contained in a special subvariety of positive codimension, then the intersection of $C$ with the union of all special subvarieties of $Y(1)^n$ of codimension at least~$2$ is finite.
\end{theorem}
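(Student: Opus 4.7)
The overall strategy is the Pila--Zannier framework for Zilber--Pink, as deployed by Habegger and Pila in their proof of \cref{HPthm}, with the asymmetry condition replaced by a new height bound coming from Andr\'e's G-functions method. After standard reductions, the statement amounts to proving finiteness of the set of points $s \in C(\Qbar)$ lying in some special subvariety $T \subset Y(1)^n$ of codimension at least $2$, where the pair $(T,s)$ is ``optimal'' in the sense that the intersection is not explained by a proper positive-dimensional special subvariety containing~$C$. Each such $s$ records a tuple of elliptic curves satisfying at least two independent modular (Hecke) or CM relations, encoded by matrices $g_1, \dots, g_k \in \GL_2(\bQ)^+$. The preimage of $C$ in $\bH^n$ under the uniformisation is definable in $\Ranexp$, so Pila--Wilkie counting gives a polynomial upper bound for the number of admissible integer tuples $(g_1,\dots,g_k)$ in terms of their ``modular complexity.'' Converting this into actual finiteness requires a polynomial \emph{lower} bound on the Galois orbit $\#(\Gal(\Qbar/F) \cdot s)$ in that complexity (provided by classical isogeny estimates in the non-CM range together with the CM inputs underlying \cite{PST+}), and a polynomial \emph{upper} bound for the Weil height $h(s)$ in the same complexity. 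The height bound is the whole content of the theorem.

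For the height bound I would use that $C$ passes through $(\infty,\dots,\infty) \in (\bP^1)^n$. In a punctured analytic neighbourhood of this cusp, each $j$-coordinate on $C$ is a Laurent series in a uniformising parameter $t$ with a pole at $t=0$, and the associated elliptic curves all have multiplicative reduction. Organising the periods of the universal elliptic curve over such a disc as $G$-functions of the Tate parameter, a Hecke isogeny imposed by $T$ at a point $s$ near infinity translates into an algebraic relation among archimedean values of these $G$-functions at a point $t(s)$ of small absolute value. Andr\'e's theorem on values of $G$-functions then forces the same algebraic relation to persist, up to a controlled denominator, at every non-archimedean place $v$ at which all the relevant elliptic curves have (good or) split multiplicative reduction and at which the $G$-functions $v$-adically converge at $t(s)$. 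Summing these local contributions over a growing family of such primes produces a polynomial upper bound on $h(s)$, exactly as indicated by the phrase ``unboundedly many non-archimedean places'' in the abstract.

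The principal obstacle is making the $G$-function specialisation argument work at unboundedly many primes simultaneously. Concretely, one must: control uniformly in~$p$ the $p$-adic radius of convergence of the relevant $G$-functions so that enough primes $v \mid p$ contribute; ensure that the chosen primes are genuinely places of multiplicative reduction for all the factor elliptic curves attached to $s$, and that the Hecke relations prescribed by the $g_i$ remain nontrivial modulo each such prime (so the corresponding local contribution is nonzero); and aggregate the resulting local inequalities into a single global height bound whose polynomial dependence on the modular complexity is strong enough to beat the Pila--Wilkie counting bound against the Galois lower bound. Once the height inequality is in place, the usual Pila--Zannier dichotomy forces the complexity of $T$ to be bounded, whence by standard arguments the set of optimal $s$ is finite and \cref{mainZP} follows.
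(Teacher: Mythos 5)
Your overall framing (reduce to strongly special subvarieties via \cite[Theorem~2]{HP12}, run Pila--Zannier with Pila--Wilkie counting against a Galois lower bound, and obtain that Galois bound from isogeny estimates plus a height bound near the cusp, using G-functions of the Tate parameter) matches the paper's architecture. But the central step — the height bound — is described with the logic of the G-function theorem inverted, and as stated it would fail. André--Bombieri (\cref{hasse-principle-E}) does \emph{not} take an archimedean relation and ``force the same algebraic relation to persist'' at non-archimedean places; persistence of that kind is simply false in general. The theorem's hypothesis is a \emph{global} relation, i.e.\ one that you must exhibit at every embedding where the argument lies inside the radius of convergence, and its conclusion is directly $h(\xi) \le c\,\delta^{c'}$ in terms of the degree $\delta$ of that relation — there is no ``summing of local contributions over a growing family of primes.'' The entire difficulty, and the paper's main novelty, is precisely the construction of the non-archimedean relations at the (unboundedly many, uncontrolled) places where $x(s)$ is $p$-adically small: this is done via the Tate uniformisation (\cref{non-arch-relation}), where the isogeny of degree $M$ lifts to $[m]$ on $\GG_m$ for some divisor $m \mid M$, giving the linear relation $aF^\iota(s_1^\iota) = mF^\iota(s_2^\iota)$; taking the product over divisors of $M$ yields a single polynomial of degree at most $2d(M) \ll_\epsilon M^\epsilon$ valid simultaneously at \emph{all} such places (\cref{global-relation}(2)). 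Your proposal contains no mechanism for producing these relations, and the relation that does hold $p$-adically is not ``the same'' as the archimedean one (it is linear in $F$ only, versus the quadratic archimedean relation).

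Two further points you would need and do not address. First, at archimedean places a single modular relation produces a period identity with a term $r/2\pi i$ (see \eqref{eqn:relation-pi}); one needs the \emph{second} modular relation, with $\{i_1,i_2\} \neq \{i_3,i_4\}$, to eliminate $2\pi i$ and get an algebraic relation at all (\cref{arch-relation}) — this is why \cref{galois-bound} and \cref{andre-bound} require two isogeny conditions, unlike \cref{habegger-conj}. Second, \cref{hasse-principle-E} requires the global relation to be \emph{non-trivial}, i.e.\ not a specialisation of a functional relation among the G-functions; the paper proves there are none using Hodge genericity of $C$ and André's normal monodromy theorem (\cref{prop:trivial}), together with the bookkeeping needed when $\ov{C}$ is not rational (the Galois cover $C_4$, the points $s_1,\dotsc,s_\ell$ over $(\infty,\dotsc,\infty)$, and one representative per generic isogeny class). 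Your Galois-orbit step also leans on ``CM inputs underlying \cite{PST+},'' which are not needed here: CM coordinates are disposed of by \cite[Theorem~2]{HP12}, and the lower bound for the remaining (strongly special) intersections comes from the height bound combined with effective Masser--Wüstholz isogeny estimates, as in \cref{galois-bound2}.
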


The class of curves which intersect infinity neither contains nor is contained in the class of asymmetric curves.
Examples of curves in $Y(1)^n$ which intersect infinity but are not asymmetric are given by lines in $\AAA^n$ which are not parallel to any coordinate hyperplane.

In fact, by combining \cref{mainZP} with the ideas about ``modular Mordell--Lang'' from \cite{HP12}, we may prove the Zilber--Pink conjecture in full for lines in~$Y(1)^n$.
(We thank an anonymous referee for suggesting this theorem.)

\begin{theorem} \label{ZP-lines}
Let $C$ be a line in $Y(1)^n \cong \AAA^n$ which is defined over $\Qbar$.
If $C$ is not contained in a special subvariety of positive codimension, then the intersection of $C$ with the union of all special subvarieties of $Y(1)^n$ of codimension at least~$2$ is finite.
\end{theorem}

If a line in $Y(1)^n$ is not parallel to any coordinate hyperplane and is contained in any hyperplane defined by setting two coordinates equal to one another, then it is not contained in any special subvariety of positive codimension.
Hence we obtain the following concrete example of a new case of the Zilber--Pink conjecture included in \cref{mainZP}, where $\Phi_M(X,Y)\in\ZZ[X,Y]$ denotes the classical modular polynomial.

\begin{corollary}\label{ZP-lines-ab}
Let $a,b$ be distinct, non-zero algebraic numbers.
Then there are only finitely many $z\in\bC$ for which there exist positive integers $M$, $N$ such that the following equation holds:
\[ \Phi_M(z, z+a) = \Phi_N(z, z+b) = 0. \]
\end{corollary}

\begin{proof}
    Let $C$ denote the line in $Y(1)^3$ defined by $x_2=x_1+a$ and $x_3=x_1+b$.
    Then $C$ intersects infinity and is not contained in a special subvariety of positive codimension.
    The curves in~$Y(1)^3$ defined by $\Phi_M(x_1,x_2) = \Phi_N(x_1,x_3) = 0$ are special subvarieties, so the theorem follows from \cref{mainZP} applied to~$C$.
\end{proof}

As well as proving \cref{mainZP}, the aim of this article is also to \textbf{demonstrate a new method of proving lower bounds for Galois orbits}, the most difficult aspect of the Zilber--Pink conjecture.
We use a method of André \cite[Ch.~X]{And89} for studying fibres with large endomorphism rings in a family of abelian varieties, based on G-functions.
Recently, this method has been used to obtain bounds for Galois orbits in other cases of the Zilber--Pink conjecture \cite{BM:AO}, \cite{ExCM}, \cite{QRTUI}.
André's method has also been applied by Papas in variations of Hodge structure, going beyond families of abelian varieties~\cite{Papas}.

The key new contribution of this article is to use relations between evaluations of G-functions \textit{at all non-archimedean places} to obtain lower bounds for Galois orbits, and thus new cases of the Zilber--Pink conjecture.
Previous applications used only archimedean evaluations of G-functions (except \cite{And95}, which exploited only a fixed finite number of non-archimedean places).
Ignoring non-archimedean places restricted the large endomorphism rings for which the method could prove Galois orbit bounds, preventing the method being applied to $Y(1)^n$, or to intersections with  $E^2$ curves in the moduli space of abelian surfaces in \cite{QRTUI}.

This article's method can be generalised to other cases of the Zilber--Pink conjecture for curves in moduli spaces of abelian varieties, subject to a condition generalising the ``intersects infinity'' condition in \cref{mainZP}.
This condition corresponds to multiplicative degeneration of the corresponding family of abelian varieties as in \cite[X, Theorem~1.3]{And89}.
The method can also be used to prove versions of \cref{mainZP} with some variations of this degeneration condition, but removing it entirely is challenging.

\subsection{Height bounds and bounds for Galois orbits} \label{subsec:bounds-intro}

For a precise description of special subvarieties in $Y(1)^n$, see \cite[Section~2.1]{HP12}.
Informally, a \defterm{special subvariety} is a subvariety of $Y(1)^n$ defined by a system of equations of two types:
\begin{enumerate}[(i)]
\item $\Phi_M(x_i, x_j) = 0$ for some indices~$i, j \in \{ 1, \dotsc, n \}$;
\item $x_i = a$ for some index~$i \in \{ 1, \dotsc, n \}$ and some CM $j$-invariant $a \in \Qbar$.
\end{enumerate}
A \defterm{strongly special subvariety} of $Y(1)^n$ is a subvariety defined by equations of type~(i) only.

In \cite[Theorem~2]{HP12}, the variant of \cref{HPthm} which dealt with intersections between $C$ and non-strongly special subvarieties was proved, without the asymmetry restriction.
Thus in order to prove \cref{mainZP}, it suffices to restrict our attention to strongly special subvarieties.

Furthermore, in \cite{HP12}, the asymmetry condition was used only in proving the lower bound for Galois orbits \cite[Lemma~4.2]{HP12}.
Thus, in this article, we only need to prove a new lower bound for Galois orbits of intersections between strongly special subvarieties and a curve which intersects infinity.  \Cref{mainZP} then follows via the Pila--Zannier method, exactly as in \cite{HP12}.
Our lower bound for Galois orbits is as follows.

\begin{theorem} \label{galois-bound}
Let $n\geq 3$ and let $C \subset Y(1)^n$ be an irreducible algebraic curve defined over $\ov\QQ$ that intersects infinity and is not contained in a special subvariety of positive codimension.
Let $\Sigma$ denote the set of points of~$C$ with at least two CM coordinates.
There exist positive constants $\newC{galois-bound-mult}$ and $\newC{galois-bound-exp}$ with the following property.
Let $i_1, i_2, i_3, i_4 \in \{ 1, \dotsc, n \}$ with $i_1 \neq i_2$, $i_3 \neq i_4$ and $\{i_1,i_2\} \neq \{i_3,i_4\}$.
For all points $s=(s_1,\dotsc,s_n) \in C(\ov\bQ)\setminus\Sigma$ satisfying
\[\Phi_M(s_{i_1},s_{i_2})=\Phi_N(s_{i_3},s_{i_4})=0\]
for some positive integers $M$ and~$N$,
we have 
\[ [\QQ(s):\QQ] \geq {\refC{galois-bound-mult}}\max\{M,N\}^{\refC{galois-bound-exp}}. \]
\end{theorem}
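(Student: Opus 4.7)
The plan is to follow André's G-functions method \cite[Ch.~X]{And89}, as adapted in \cite{ExCM,QRTUI}, with the key novelty of extracting constraints from values of certain G-functions at \emph{all} non-archimedean places of $\bQ(s)$ at which $s$ lies close to the cusp. The hypothesis that $C$ intersects infinity provides a point $P_\infty$ on the normalization $\tilde{C} \to C$ lying above $(\infty,\ldots,\infty) \in (\bP^1)^n$, at which the universal family of elliptic curves degenerates multiplicatively and Tate's uniformization applies.

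\textbf{Setup.} Let $t$ be a local parameter at $P_\infty$. Each coordinate $s_i$ has a pole of some order $e_i \geq 1$ at $P_\infty$, and the universal elliptic curve of $j$-invariant $s_i$ admits a Tate parameter $q_i(t) \in \Qbar[\![t]\!]$ with $\mathrm{ord}_t q_i = e_i$ satisfying $j(q_i(t)) = s_i(t)$. The relative periods of the associated family of elliptic curves over $\tilde{C}$ form a system of G-functions in $t$ via the Picard--Fuchs equation. For a point $s = s(t_0) \in C(\Qbar)$ satisfying $\Phi_M(s_{i_1}, s_{i_2}) = 0$, a cyclic isogeny of degree $M$ between $E_{i_1}(s)$ and $E_{i_2}(s)$, parameterised by an integer matrix $\bigl(\begin{smallmatrix} a & b \\ 0 & d \end{smallmatrix}\bigr)$ with $ad = M$ and $\gcd(a,b,d) = 1$, produces the identity
\begin{equation*}
q_{i_2}(t_0)^d = q_{i_1}(t_0)^a,
\end{equation*}
together with a refinement encoding the specific $d$-th root selected by the isogeny as a $d$-th root of unity. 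An analogous identity holds for the $N$-isogeny at the indices $(i_3, i_4)$.

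\textbf{Non-archimedean exploitation.} For every non-archimedean place $v$ of $K = \bQ(s)$ with $v(t(s)) > 0$, the Tate series $q_i(t(s))$ converges $v$-adically, yielding $v(q_i(t(s))) = e_i v(t(s))$. The isogeny identity imposes the valuation constraint $d e_{i_2} = a e_{i_1}$, which determines $(a,d)$ essentially uniquely from the geometric data $(e_{i_1}, e_{i_2})$, while the refinement constrains the $v$-adic unit part of $q_i(t(s))$. Applying an adelic form of the Bombieri--André inequality for values of G-functions -- summing over all non-archimedean places rather than a fixed finite set as in \cite{And95} -- yields a lower bound on $\sum_v v(t(s)) \log |\kappa(v)|$ growing like $\log \max(M,N)$. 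On the other hand, the product formula and a standard global height bound for $t(s)$ give the opposite inequality, linear in $[K:\bQ]$. The hypotheses $\{i_1, i_2\} \neq \{i_3, i_4\}$, $n \geq 3$ and $s \notin \Sigma$ are what force the two isogeny identities to be genuinely independent, enabling the logarithmic bound to be amplified to the polynomial estimate $[K : \bQ] \geq \refC{galois-bound-mult} \max(M, N)^{\refC{galois-bound-exp}}$.

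\textbf{Main obstacle.} The hardest step is proving the adelic Bombieri--André inequality. Previous applications of the method used a single archimedean place, which suffices when the enforced endomorphism structure has bounded index but fails precisely in the $Y(1)^n$ setting. Here one must control the $v$-adic radii of convergence of the relevant G-functions uniformly in $v$, in the spirit of the Dwork--Robba formalism underlying \cite[Ch.~VII]{And89}, and rule out accidental cancellations between contributions at different places. The ``intersects infinity'' hypothesis enters essentially at this stage: it is what guarantees that the Tate expansions converge $v$-adically at every prime of sufficiently small reduction, thus supplying enough places for the adelic sum to be non-trivial.
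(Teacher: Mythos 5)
There is a genuine gap, in fact two. First, the step you yourself flag as the ``main obstacle'' --- an ``adelic Bombieri--Andr\'e inequality'' obtained by summing contributions over all non-archimedean places --- is left entirely unproven, and it is not what makes the argument work. The paper never needs a new analytic theorem of this kind: it uses the standard Bombieri--Andr\'e theorem (Andr\'e's Theorem~E), which requires a \emph{single} non-trivial homogeneous polynomial relation, of controlled degree, between evaluations of the G-functions that holds at \emph{every} embedding where the argument is inside the radii of convergence. The whole point of the Tate-uniformisation input is that at a non-archimedean place the isogeny forces a linear relation $aF^\iota(s_1^\iota)=mF^\iota(s_2^\iota)$ for some divisor $m$ of $M$ (the $m$ depends on the place, but only through the finitely many divisors of $M$), so the product of the linear forms $aY_1-mY_2$ over all divisors is one global polynomial of degree at most $2d(M)\ll M^\epsilon$; at archimedean places one needs \emph{two} isogenies to eliminate $2\pi i$ from the Legendre relation, and one multiplies the resulting quadratics over the archimedean embeddings, giving degree at most $2[\hat K:\QQ]$. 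Your plan of extracting only valuation information from $q_{i_2}^d=q_{i_1}^a$ and ``ruling out cancellations between places'' does not produce such a relation, and without it Theorem~E cannot be invoked.

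Second, even granting your adelic inequality, the bound you describe --- $\sum_v v(t(s))\log|\kappa(v)|\gg\log\max(M,N)$ against a height bound linear in $[K:\QQ]$ --- would give only $[\QQ(s):\QQ]\gg\log\max\{M,N\}$, and the asserted ``amplification'' to the polynomial bound $[\QQ(s):\QQ]\geq c\max\{M,N\}^{c'}$ has no mechanism behind it; the hypotheses $n\geq 3$, $\{i_1,i_2\}\neq\{i_3,i_4\}$, $s\notin\Sigma$ guarantee non-triviality of the relation (via Hodge genericity and Andr\'e's normal monodromy theorem), not any amplification. In the actual proof the polynomial Galois bound comes from a completely different final step that your proposal omits: the G-function method only yields the height bound $h(s)\ll\max\{[\QQ(s):\QQ]^{c},M^\epsilon\}$, and this is then combined, exactly as in Habegger--Pila, with the effective Masser--W\"ustholz isogeny estimate of Gaudron--R\'emond (minimal isogeny degree polynomially bounded by degree and Faltings height), Silverman's comparison of Faltings and modular heights, and the observation that for non-CM elliptic curves a cyclic $M$-isogeny forces the minimal isogeny degree to be at least $M$. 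Without this isogeny-estimate step the statement cannot be reached from a height bound alone.
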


The statement of Theorem \ref{galois-bound} is the same as \cite[Lemma~4.2]{HP12}, except that the ``asymmetric curve'' condition has been replaced by ``intersects infinity'' (and we always impose the condition $\{i_1,i_2\}\neq\{i_3,i_4\}$, which is not really a change from \cite[Lemma~4.2]{HP12} because the condition of intersecting infinity implies that no coordinate function is constant on~$C$).

Habegger and Pila deduce \cite[Lemma~4.2]{HP12} from isogeny estimates of Masser--Wüstholz type and a height bound: after a suitable relabelling of the coordinates,
\begin{equation} \label{eqn:HP-height-bound}
h(s_{i_1}) \leq \newC* \max\{1,\log(M)\}.
\end{equation}
Here, and throughout the paper, $h:\AAA^n(\ov\bQ)\to\RR$ denotes the (logarithmic) height defined in \cite[Section~2.2]{HP12}.
This height bound \cite[(17)]{HP12}, which is essentially the same as \cite[Thm.~1.1]{Hab10}, is obtained by comparing heights of coordinates on~$C$ and Faltings heights, with a key input being the asymmetry condition.

Habegger conjectured that the height bound \eqref{eqn:HP-height-bound} should hold without requiring the asymmetry condition, and indeed that it should only require two coordinates related by one modular polynomial (as opposed to the two relations between three or four coordinates in \cref{galois-bound}):
\begin{conjecture} \label{habegger-conj} \cite[Conjecture]{Hab10}
Let $C \subset \AAA^2$ be an irreducible algebraic curve defined over~$\Qbar$ that is not special.
There exists a positive constant $\newC{habegger-conj-mult}$ such that, for all $s = (s_1,s_2) \in C(\Qbar)$ satisfying $\Phi_M(s_1, s_2) = 0$ for some positive integer~$M$, we have
\[ h(s) \leq \refC{habegger-conj-mult} \max \{ 1, \log(M) \}. \]
\end{conjecture}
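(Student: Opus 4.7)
The plan is to adapt the G-function method developed in this paper to the setting of a single modular relation; since \cref{habegger-conj} is open, my sketch is necessarily speculative, but I will indicate where the method should succeed and where the real obstruction lies. As $C$ is not special, both coordinate projections $C \to Y(1)$ are non-constant and $C$ is not itself cut out by any modular polynomial. I would work with the family of abelian surfaces $\pi : \cE_1 \times_C \cE_2 \to C$, where $\cE_i$ denotes the pullback of the universal elliptic curve along the $i$-th projection. For each $s \in C(\Qbar)$ with $\Phi_M(s_1, s_2) = 0$, the two elliptic factors of the fibre at~$s$ are related by a cyclic isogeny of degree comparable to $M$, which in turn produces a non-trivial endomorphism of the abelian surface $\cE_1(s) \times \cE_2(s)$.

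Following André and the present paper, I would then fix a rational base point of $C$ and a basis of periods for $\pi$ on a neighbourhood of this base point, yielding a $\QQ$-vector space of G-functions. The cyclic isogeny at $s$ imposes rational linear relations on the corresponding period values at $s$, valid simultaneously at the archimedean place and at every non-archimedean place of good reduction. Combining André's Hasse principle for values of G-functions with the non-archimedean enhancement developed in this paper, and pitting the sum of local contributions against $[\QQ(s):\QQ]$, one expects a lower bound for $[\QQ(s):\QQ]$ in terms of $M$ and $h(s)$, which on inversion gives the desired $h(s) = O(\log M)$. A final comparison between the height of $s \in C$, the heights of its $j$-invariants, and Faltings heights of the elliptic factors (in the style of Silverman) would translate this into the stated height bound on $s \in C \subset \AAA^2$.

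The principal obstacle, and in my view the reason the conjecture remains open, is exactly the hypothesis that drives \cref{galois-bound}: the non-archimedean G-function argument needs a place at which G-function values have large denominators, and this paper extracts such denominators from multiplicative degeneration at the intersects-infinity point. A general curve $C$ need not pass through a cusp, and even when $\cE_1 \times_C \cE_2$ acquires multiplicative degeneration somewhere, controlling the denominators of period values at the relevant places requires delicate local analysis that may simply be unavailable. Removing the degeneration hypothesis therefore seems to demand not a sharpening of estimates but a new mechanism for generating non-archimedean constraints on G-function values --- a challenge which the introduction of this paper explicitly flags as genuinely hard, and which would presumably need to be combined with, or replaced by, an arithmetic intersection argument in the spirit of Silverman or Yuan--Zhang.
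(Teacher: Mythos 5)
The statement you are asked about is not a theorem of this paper at all: it is Habegger's open conjecture, quoted from [Hab10] for context, and the paper explicitly does \emph{not} prove it --- it proves only the weaker \cref{andre-bound}, which (a) assumes the curve intersects infinity, (b) requires \emph{two} modular relations rather than one, and (c) gives a bound of the shape $h(s) \ll \max\{[\QQ(s):\QQ]^{c}, M^{\epsilon}\}$ rather than $O(\log M)$. Your sketch is honest that it is speculative, and you correctly identify multiplicative degeneration as the hypothesis the paper cannot remove; but as a proof attempt it has gaps beyond that single admitted obstruction, so it should not be read as ``the method works except for degeneration.''

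Concretely, two further points would block your outline even for curves that do intersect infinity. First, with a single modular relation the archimedean period relation produced by an isogeny has the form of equation \eqref{eqn:relation-pi}: its right-hand side is $r/2\pi i$ with $r \in \ZZ$, so unless $r=0$ the relation has a transcendental coefficient and is useless as input to \cref{hasse-principle-E}. The paper needs the second isogeny (the second modular relation, cf.\ \cref{arch-relation} and the remark after \cref{andre-bound}) precisely to eliminate $2\pi i$; \cref{habegger-conj} supplies only one relation, so your plan of ``the cyclic isogeny at $s$ imposes rational linear relations \ldots valid at the archimedean place'' fails at exactly this step. Second, the quantitative output of the Bombieri--Andr\'e theorem is $h(\xi) \le c_1 \delta^{c_2}$ where $\delta$ is the degree of the global relation; in this paper $\delta$ is controlled by $[\hat K : \QQ]$ (from multiplying archimedean relations over all embeddings) and by $d(M) \ll_\epsilon M^{\epsilon}$ (from the non-archimedean relations of \cref{global-relation}(2)). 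No inversion of this yields $h(s) = O(\log M)$, nor a bound independent of $[\QQ(s):\QQ]$; that is why even under the degeneration hypothesis the paper obtains only \cref{andre-bound}, explicitly described as weaker than \cref{habegger-conj}. Finally, your description of the non-archimedean difficulty is slightly off: the issue is not finding a place with ``large denominators'' of G-function values, but that Bombieri's theorem demands the relation hold at \emph{every} place where $\xi$ lies inside the radius of convergence, and the paper's novelty is to manufacture such relations at all non-archimedean places simultaneously via the Tate uniformisation, rather than to avoid them.
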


The majority of this article is dedicated to proving a height bound, somewhat weaker than \cref{habegger-conj}, in the case of curves that intersect infinity. We shall prove the following height bound or, more precisely, a technical variant of it.

\begin{theorem} \label{andre-bound}
Let $C\subset\AAA^n$ and $\Sigma$ be as in \cref{galois-bound} (in particular, $C$ intersects infinity). For every $\epsilon>0$, there exist constants $\newC{andre-bound-mult}$ and $\newC{andre-bound-exp}$ such that, for all $s$, $M$, $N$ as in \cref{galois-bound},
we have
\[ h(s) < \refC{andre-bound-mult} \max\{ [\QQ(s):\QQ]^{\refC{andre-bound-exp}}, M^\epsilon \}. \]
\end{theorem}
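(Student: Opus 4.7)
The plan is to execute André's G-function method from \cite[Ch.~X]{And89} for the $n$ elliptic curves parametrized by $C$, exploiting local isogeny relations at \emph{all} non-archimedean places rather than only at the archimedean place. First I would choose a local uniformizer $t$ on the normalization $\widetilde C$ of $C$ at a point $\widetilde P_\infty$ above $(\infty,\dotsc,\infty)$. Since each coordinate $j$-invariant $j_i$ has a pole at $\widetilde P_\infty$, Tate's theory yields a parameter $q_i(t)\in\Qbar\powerseries{t}{}$ with $q_i(0)=0$ and $j_i(t)=q_i(t)^{-1}+O(1)$; equivalently, the $i$-th pulled-back elliptic curve acquires multiplicative reduction at $\widetilde P_\infty$. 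The Picard--Fuchs connection of each pulled-back family has a fundamental solution matrix near $t=0$ whose entries involve $\log q_i(t)$ together with a vector $\mathbf y(t)$ of G-functions built from the $q_i(t)$.

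\textbf{Local relations from isogenies.} Let $s\in C(\Qbar)\setminus\Sigma$ be as in \cref{galois-bound}, corresponding to $t=t_0\in\Qbar$. At any place $v$ of $\QQ(t_0)$ for which $|t_0|_v$ is sufficiently small, each $q_i$ converges $v$-adically to a non-zero element of absolute value $<1$, so both $E_{i_1,s}$ and $E_{i_2,s}$ have multiplicative reduction at~$v$; the relation $\Phi_M(s_{i_1},s_{i_2})=0$ then forces a Tate-parameter identity $q_{i_1}(t_0)^a=\zeta\,q_{i_2}(t_0)^b$ for positive integers $a,b$ with $ab\mid M$ and $\zeta$ a root of unity, hence a $\ZZ$-linear relation between $v(q_{i_1}(t_0))$ and $v(q_{i_2}(t_0))$ with coefficients of size $O(M)$. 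The analogous statement for $\Phi_N$, together with the hypotheses $n\ge3$, $\{i_1,i_2\}\ne\{i_3,i_4\}$ and the non-specialness of $C$, guarantees that the two resulting relations are linearly independent.

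\textbf{Summation and Bombieri--André inequality.} These local data are now fed into an enhanced version of André's Bombieri-style inequality for values of G-functions at an algebraic point. Rather than extracting a single local relation from the archimedean place, every non-archimedean $v$ with $|t_0|_v<1$ contributes its own algebraic relation, and summing over $v$ produces an inequality of the shape
\[
    \sum_v \log^{+}|t_0|_v^{-1} \leq \newC*\cdot\max\bigl\{[\QQ(s):\QQ]^{\newC*},\,M^\epsilon\bigr\}.
\]
Since $j_i(t)=t^{-d_i}\cdot(\text{unit series})$ for some positive integer $d_i$, the left-hand side controls $h(t_0)$, and hence $h(s)$, up to $O(\log M)$ terms absorbed by the $M^\epsilon$ contribution.

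The main obstacle is precisely this summation step: previous implementations of the G-function method used only the archimedean place (or a fixed finite set of non-archimedean ones as in \cite{And95}), whereas here the set of admissible $v$ must be allowed to grow with~$s$. This demands uniform Eisenstein-type estimates on the coefficients of $\mathbf y$ valid at every~$v$, and careful height bookkeeping in the local relations of Step~2 so that the resulting global error terms remain polynomial in $[\QQ(s):\QQ]$ rather than swamping the main term extracted from the non-archimedean sum.
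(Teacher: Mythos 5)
Your proposal stalls exactly at its central step, and you say so yourself: the ``summation and Bombieri--Andr\'e inequality'' you invoke, in which every non-archimedean place $v$ with $|t_0|_v$ small contributes a separate local relation and these are then summed to bound $\sum_v \log^+|t_0|_v^{-1}$, is not a theorem that exists, and you give no argument for it. Worse, the local data you propose to feed into it are of the wrong kind: an identity $q_{i_1}(t_0)^a=\zeta\,q_{i_2}(t_0)^b$ yields a $\ZZ$-linear relation between \emph{valuations} of Tate parameters, which is not a polynomial relation among \emph{values of G-functions} at $t_0$, and no version of Bombieri's or Andr\'e's theorem (\cref{hasse-principle}, \cref{hasse-principle-E}) accepts such input. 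The actual proof resolves precisely this obstacle differently: from the Tate uniformisation it extracts, for each divisor $m$ of $M$, the single place-independent linear relation $aF^\iota(s_1^\iota)=mF^\iota(s_2^\iota)$ between evaluations of the period G-function $F$, where $a\in\hat K$ is the global scalar with $f^*(\omega_{s_2})=a\,\omega_{s_1}$ (\cref{isogeny-lifts-tate}, \cref{non-arch-relation}). Taking the product over divisors gives one polynomial $P_{fin}$ of degree at most $2d(M)\ll_\epsilon M^\epsilon$ that holds simultaneously at \emph{every} non-archimedean place where the point is small (\cref{global-relation}), so no summation over places and no enhanced inequality are needed: the standard \cref{hasse-principle-E} applies to a single global relation of degree $O(\max\{[K(s):\QQ],M^\epsilon\})$.

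Two further ingredients of the proof are absent from your sketch. First, Bombieri's theorem requires the relation to hold at archimedean places within the radius of convergence as well; there the relation between periods of an isogenous pair involves $2\pi i$, and this is the true reason the statement needs the second modular equation $\Phi_N$ --- two isogenies are combined to eliminate $2\pi i$ (\cref{arch-relation}) --- not the ``linear independence of valuation relations'' you appeal to. Second, the theorem only applies to \emph{non-trivial} global relations, so one must rule out functional relations among the G-functions $F_\lambda,G_\lambda$; the paper does this via Andr\'e's normal monodromy theorem and a Zariski-density argument for the period map (\cref{CZar}, \cref{prop:trivial}), together with the bookkeeping needed because the exceptional point may be close to different degeneration points of the cover $C_4$ at different places (\cref{curve-ramification}, \cref{padic-local-isoms}). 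Without the place-independent relation between G-function values, and without these two steps, the argument as proposed does not go through.
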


The bound in \cref{andre-bound} is weaker than $\refC{habegger-conj-mult} \max \{ 1, \log(M) \}$ in \cref{habegger-conj}, but it is sufficient to deduce \cref{galois-bound} in the same way as \cite[Lemma~4.2]{HP12} follows from \eqref{eqn:HP-height-bound}.
Note that, unlike \cref{habegger-conj} and \cite[Thm.~1.1]{Hab10}, we require two modular relations in \cref{andre-bound}.
Two modular relations are required to ``eliminate $2\pi i$'' in the relations between archimedean periods (see \cref{arch-relation}).

We will not prove or use \cref{andre-bound} directly, but rather a variant applying to a suitable finite cover of~$C$.  \Cref{andre-bound} can be deduced from \cref{andre-bound2} in the same way as \cref{galois-bound} is deduced from \cref{galois-bound2} in Section~\ref{subsec:main-proofs}.

\subsection{Comparison with previous applications of André's method}

As in \cite[Ch.~X]{And89}, our proof of \cref{andre-bound} is based on studying the relative periods of a family of abelian varieties $\cA$ over the curve~$C$.
The Taylor series of some of these period functions (the ``locally invariant periods'') around
a point where $\cA$ degenerates to a torus (in the case of \cref{andre-bound}, this point is $(\infty,\dotsc,\infty)$) are power series with special Diophantine properties, called G-functions.

At points $s \in C$ where the corresponding abelian variety $\cA_s$ has additional endomorphisms, there are polynomial relations between the evaluations at~$s$ of these G-functions.
\Cref{andre-bound} follows using a theorem of Bombieri: if there is a polynomial relation between the evaluations of a set of G-functions at a $\Qbar$-point $s$, valid at all places for which $s$ lies inside the radius of convergence of the G-functions, and not coming from a ``trivial'' relation between the G-functions themselves, then the height of $s$ is bounded in terms of the degree of the polynomial relation.

In \cite{And89}, and in previous applications of \cite{And89} to Galois orbit bounds for the Zilber--Pink conjecture, only the archimedean evaluations of G-functions were interpreted as periods.
In order to apply Bombieri's theorem without finding relations between non-archimedean evaluations of G-functions, the extra endomorphisms of $\cA_s$ were used to ensure that $s$ lies outside the radius of convergence of the G-functions at all non-archimedean places.
This is the reason for the condition appearing in \cite{And89} that $\End(\cA_s)$ does not inject into $\End(\GG_m^g) \otimes_\ZZ \QQ \cong \rM_g(\QQ)$ (where $g = \dim(\cA_s)$).

On the other hand, in \cref{mainZP}, the exceptional abelian varieties $\cA_s$ have endomorphism rings which inject into $\rM_4(\QQ)$.
For example, a product of non-CM elliptic curves $E_1 \times E_2 \times E_3 \times E_4$, where there are isogenies $E_1 \to E_2$ and $E_3 \to E_4$, has endomorphism algebra $\rM_2(\QQ) \times \rM_2(\QQ)$.
Thus, in order to apply Bombieri's theorem, we need relations valid at non-archimedean places.

Relations between non-archimedean evaluations of G-functions were used in \cite[Théorème~1]{And95}.  However in this result, as in \cite{And89}, the global relation needed for Bombieri's theorem was obtained as a product of relations, one for each place.  Thus in order to control the degree of this global relation, it was necessary to limit the number of non-archimedean places at which the G-functions could converge (by imposing an integrality condition at other places).
Our construction of relations instead directly constructs a polynomial relation simultaneously valid at all non-archimedean places, based on the Tate uniformisation of elliptic curves in a family degenerating to a torus.
At archimedean places, we are still using the previous approach of taking a product of local relations, one for each place.

\enlargethispage{\baselineskip}

Since the first preprint of this article became public, several other papers have appeared, all building on the aforementioned approach to Zilber--Pink using relations between G-functions. These include the paper \cite{Urbanik} of Urbanik and the papers \cite{Papas,Papas24,PapasII} of Papas, as well as our sequel to this article \cite{DOAg}. For a detailed survey of the literature to date, see \cite[Section 1.D]{DOAg}.

\subsection{Effectivity}

Our height bound \cref{andre-bound} and Galois bound \cref{galois-bound} can be made effective.
The main work required to do this would be to determine the size and global radius of the G-functions $F_\lambda$ and $G_\lambda$ in equation~\eqref{eqn:FG-li} and the size and singularities of the associated differential operator.

As previously mentioned, the Zilber--Pink conjecture in the case of intersections between curves $C \subset Y(1)^n$ and \textit{non-strongly special} subvarieties was already proved by \cite[Theorem~2]{HP12}, without any condition on the curve~$C$.
When the special subvariety is defined by one CM coordinate and one pair of isogenous coordinates, the underlying Galois bound \cite[Lemma~4.4]{HP12} was ineffective due to its dependence on an estimate for Siegel zeros.
(The remaining case, namely the André--Oort conjecture for a curve in $Y(1)^n$, is known effectively by a different method, due to Kühne \cite{Kuh12}.)
Our proof of \cref{galois-bound} could be adapted to apply to intersections with non-strongly special subvarieties, provided that $C$ intersects infinity, thus leading to an effective Galois bound for such cases -- the bound on CM discriminants would come from a height bound as in \cref{andre-bound} and an estimate of Masser--Wüstholz type.

Note that the full strength of this paper's ideas are not required to deal with intersections with non-strongly special subvarieties in~$Y(1)^n$, because having one CM coordinate is sufficient to prevent such intersections being $p$-adically close to infinity for any~$p$.
Hence, a height bound can be proved for such intersections using only relations between archimedean periods, using the methods of \cite[Ch.~X]{And89}.

Thus, if the relevant point-counting step of the Pila--Zannier strategy were to be made effective, that could be combined with Galois bounds using the techniques of this article, to obtain an effective proof of the Zilber--Pink conjecture for curves in $Y(1)^n$ which intersect infinity.

\subsection{Structure of the paper}

We begin with some preliminary definitions and lemmas in Section~\ref{sec:prelim}. In Section~\ref{sec:families}, we describe two well-known families of elliptic curves: the ``$1/j$'' family and the Tate family.  We also describe periods of elliptic curves in terms of the relation between these families. In Section~\ref{sec:period-relations}, we construct polynomial relations between the periods of isogenous elliptic curves, both archimedean and non-archimedean.
In Section~\ref{sec:proofs}, we prove our main theorems: \cref{galois-bound} and a variant of \cref{andre-bound}, by interpreting the relations from Section~\ref{sec:period-relations} as relations between evaluations of G-functions.
We also deduce \cref{ZP-lines} from \cref{mainZP}.

\subsection*{Acknowledgements}
Both authors would like to thank the Engineering and Physical Sciences
Research Council for its support via New Investigator Awards [EP/S029613/1 to C.D., EP/T010134/1 to M.O.].
They would also like to thank the organisers of the Mathematisches Forschungsinstitut Oberwolfach workshop~2216, \textit{Diophantische Approximationen}, for the opportunity to speak about this work.
They are grateful to the referees for their careful reading
of the paper and helpful suggestions and corrections.

\section{Preliminaries}

\label{sec:prelim}

\subsection{Embeddings of a number field}

Let $K$ be a number field.
We define an \defterm{embedding} of $K$ to be a homomorphism $K \to \CC$ or $\CC_p$ for some prime~$p$.
If $\iota$ is an embedding of $K$, we write $\CC_\iota$ for the complete algebraically closed field which is the target of $\iota$. We write $D(0, r, \CC_\iota)$ for the open disc in~$\CC_\iota$ of radius~$r$, centred at~$0$.

If $s \in K$, we write $s^\iota = \iota(s) \in \CC_\iota$.
Similarly, if $S$ is an algebraic object over $K$ (in particular, a polynomial, a power series, an algebraic variety or a morphism of algebraic varieties), then we write $S^\iota$ for the base change of $S$ by $\iota \colon K \to \CC_\iota$.
This superscript notation sometimes carries the connotation that $S^\iota$ should be interpreted as an analytic, rather than algebraic, object over $\CC_\iota$: in particular, if $f \in \powerseries{K}{X}$ then $f^\iota$ denotes the analytic function obtained by evaluating $f$ on its disc of convergence in $\CC_\iota$.

We have sought to minimise the use of rigid geometry in this paper (it is essential only in section~\ref{sec:padic-rel}).
Nevertheless, whenever we wish to refer specifically to the complex or rigid analytification of an algebraic variety (after base change to~$\CC_\iota$), we denote this by the superscript $\iotaan$.

We normalise the absolute value on $\CC_\iota$ so that it extends the standard archimedean or $p$-adic absolute value on $\QQ$.

\subsection{Non-archimedean power series}

Let $f(X) = \sum_{n=0}^\infty a_nX^n \in \powerseries{K}{X}$ be a power series, where $(K, \abs{\cdot})$ is a complete field equipped with an absolute value.
We write $R(f)$ for the radius of convergence of $f$.
Recall that
\[ R(f) = \frac{1}{\limsup_{n \to \infty} \abs{a_n}^{1/n}}. \]
When $(K, \abs{\cdot})$ is non-archimedean, we also define
\[ R^\dag(f) = 1/\sup_{n \in \ZZ_{>0}} \abs{a_n}^{1/n} = \inf \{ \abs{\rho} : \rho \in K^\times, f(\rho^{-1}X)-f(0) \in \cO_\powerseries{K}{X} \}.  \]
Note that we ignore $a_0$ in the definition of $R^\dag$.

Clearly $R^\dag(f) \leq R(f)$.
It is easy to prove that if
$R(f) > 0$, then $R^\dag(f) > 0$.

The quantity $R^\dag(f)$ is useful for understanding composition of power series over non-archimedean fields.
We note the following bound, which can be seen as a consequence of the non-archimedean mean value theorem:

\begin{lemma} \label{padic-Rdag-bound}
Let $(K, \abs{\cdot})$ be a non-archimedean complete field equipped with an absolute value.
Let $g \in \powerseries{K}{X}$ and $x \in K$.
If $\abs{x} \leq R^\dag(g)$, then \[ \abs{g(x)-g(0)} \leq R^\dag(g)^{-1}\abs{x}. \]
\end{lemma}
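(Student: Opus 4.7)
The plan is to expand $g(x) - g(0)$ as a convergent series and bound each term using the definition of $R^\dag(g)$, then apply the non-archimedean (ultrametric) triangle inequality.

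Write $g(X) = \sum_{n=0}^\infty a_n X^n$, so that
\[ g(x) - g(0) = \sum_{n=1}^\infty a_n x^n. \]
First I would verify convergence: from $R^\dag(g) \leq R(g)$ and $|x| \leq R^\dag(g)$, the series converges in $K$. Next, the definition
\[ R^\dag(g) = 1/\sup_{n \in \ZZ_{>0}} |a_n|^{1/n} \]
gives, for every $n \geq 1$, the inequality $|a_n| \leq R^\dag(g)^{-n}$. Combining this with $|x|^n = |x| \cdot |x|^{n-1} \leq |x| \cdot R^\dag(g)^{n-1}$ (which uses the hypothesis $|x| \leq R^\dag(g)$), I obtain for each $n \geq 1$ the term-wise bound
\[ |a_n x^n| \leq R^\dag(g)^{-n} \cdot |x| \cdot R^\dag(g)^{n-1} = R^\dag(g)^{-1} |x|. \]

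Finally, applying the ultrametric inequality to the convergent series yields
\[ |g(x) - g(0)| = \Bigabs{\sum_{n=1}^\infty a_n x^n} \leq \max_{n \geq 1} |a_n x^n| \leq R^\dag(g)^{-1} |x|, \]
which is the claimed bound.

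The argument is essentially a direct unwinding of the definition of $R^\dag$, and I do not anticipate any real obstacle: the only mild subtlety is the factorisation $|x|^n = |x| \cdot |x|^{n-1}$ that allows one to extract a single factor of $|x|$ while absorbing the remaining $|x|^{n-1}$ against $R^\dag(g)^{-(n-1)}$. The use of the ultrametric inequality (rather than the archimedean triangle inequality) is essential, since without it one would pick up a potentially divergent geometric series on the right-hand side.
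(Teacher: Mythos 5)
Your proof is correct and is essentially the paper's own argument: both bound each term by $\abs{a_nx^n} \leq R^\dag(g)^{-1}\abs{x}$ using $\abs{a_n} \leq R^\dag(g)^{-n}$ together with extracting a single factor of $\abs{x}$, and then conclude by the ultrametric inequality. No substantive difference.
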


\begin{proof}
Let $g(X) = \sum_{n=0}^\infty b_nX^n$.
Note that $R^\dag(g) = \inf_{n \geq 1} \abs{b_n}^{-1/n}$.
Hence, for each $n \geq 1$, $R^\dag(g)^n \leq \abs{b_n}^{-1}$.
Therefore, for each $n \geq 1$,
\begin{equation} \label{eqn:bnxn}
\abs{b_nx^n} \leq \abs{b_n}\abs{x}R^\dag(g)^{n-1}
\leq R^\dag(g)^{-1}\abs{x} \cdot \abs{b_n}R^\dag(g)^n
\leq R^\dag(g)^{-1}\abs{x} \cdot 1.
\end{equation}
Because the absolute value is non-archimedean, this implies that $\abs{g(x)-g(0)} \leq R^\dag(g)^{-1}\abs{x}$.
\end{proof}

The following lemma is obvious from the perspective of rigid geometry (it is a statement about composing morphisms of rigid open discs).
However we have given a proof in order to minimise this paper's dependence on rigid geometry.

\begin{lemma} \label{padic-composition}
Let $(K, \abs{\cdot})$ be a non-archimedean complete field equipped with an absolute value.
Let $f, g \in \powerseries{K}{X}$ with $R(f) \geq 1$ and $g(0) = 0$.
Then $R(f \circ g) \geq R^\dag(g)$ and, for all $x \in K$ satisfying $\abs{x} < R^\dag(g)$, we have $f(g(x)) = (f \circ g)(x)$.
\end{lemma}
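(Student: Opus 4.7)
The plan is to separate the formal and analytic aspects of the composition. Write $f(X) = \sum_{n \geq 0} a_n X^n$ and, using $g(0)=0$, $g(X) = \sum_{m \geq 1} b_m X^m$, and set $r = R^\dag(g)$, so that $\abs{b_m} \leq r^{-m}$ for all $m \geq 1$ by the definition of $R^\dag$. First I would bound the coefficients $c_k$ of the formal composition $f \circ g = \sum_{k \geq 0} c_k X^k$. Expanding $g(X)^n = \sum_{k \geq n} d_{n,k} X^k$, each $d_{n,k}$ is a sum of products $b_{m_1} \dotsm b_{m_n}$ with $m_1 + \dotsb + m_n = k$, so the ultrametric inequality yields $\abs{d_{n,k}} \leq r^{-k}$ and hence $\abs{c_k} \leq (\max_{n \leq k} \abs{a_n}) r^{-k}$. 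Since $R(f) \geq 1$ means $\limsup_n \abs{a_n}^{1/n} \leq 1$, a short computation shows $\limsup_k \abs{c_k}^{1/k} \leq r^{-1}$, i.e.\ $R(f \circ g) \geq r$, which is the first assertion.

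Next, to see that the analytic evaluation $f(g(x))$ is well-defined when $\abs{x} < r$, I would apply Lemma~\ref{padic-Rdag-bound}: using $g(0)=0$, it gives $\abs{g(x)} \leq r^{-1}\abs{x} < 1 \leq R(f)$, so the series $\sum_n a_n g(x)^n$ converges in $K$ and by definition equals $f(g(x))$.

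The remaining step, which I expect to be the only delicate point, is to identify $f(g(x))$ with $(f \circ g)(x) = \sum_k c_k x^k$. I would set up the double series $\sum_{(n,k)} a_n d_{n,k} x^k$ and invoke the non-archimedean Fubini principle: if the doubly-indexed family of terms tends to $0$ in the sense that only finitely many have absolute value exceeding any fixed $\epsilon > 0$, then the unordered sum exists and coincides with both iterated sums. To verify this tending-to-zero condition, I would use the bound $\abs{a_n d_{n,k} x^k} \leq \abs{a_n}(\abs{x}/r)^k$. For fixed $n$, this decays to $0$ in $k$ because $\abs{x}/r < 1$; and for large $n$, the hypothesis $R(f) \geq 1 > \abs{x}/r$ forces $\abs{a_n}(\abs{x}/r)^n \to 0$, which controls the terms with $k \geq n$ since then $\abs{a_n}(\abs{x}/r)^k \leq \abs{a_n}(\abs{x}/r)^n$. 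Summation by rows then recovers $\sum_n a_n g(x)^n = f(g(x))$, while summation by columns recovers $\sum_k c_k x^k = (f \circ g)(x)$, yielding the desired equality.
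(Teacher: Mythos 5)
Your argument is correct, but it takes a genuinely different route from the paper. The paper's proof is a reduction to a cited result: it checks that $M_{\abs{x}}(g) \leq R^\dag(g)^{-1}\abs{x} < 1 \leq R(f)$ using the inequality \eqref{eqn:bnxn} from the proof of \cref{padic-Rdag-bound}, and then invokes the composition theorem \cite[6.1.5, Theorem]{Rob00} to get both the convergence statement and the equality $f(g(x)) = (f\circ g)(x)$. You instead prove the composition theorem from scratch in the special case at hand: the coefficient estimate $\abs{c_k} \leq (\max_{n\leq k}\abs{a_n})\,R^\dag(g)^{-k}$ gives $R(f\circ g)\geq R^\dag(g)$ directly (slightly cleaner than the paper, which only extracts $R(f\circ g) > \abs{x}$ for each admissible $x$), and the equality of evaluations is obtained by verifying that the double family $a_n d_{n,k}x^k$ is a null family and applying the non-archimedean interchange-of-summation principle; the bound $\abs{g(x)}\leq R^\dag(g)^{-1}\abs{x} < 1 \leq R(f)$ via \cref{padic-Rdag-bound} plays the same role in both proofs. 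Your row-by-row summation implicitly uses that $(g^n)(x)=g(x)^n$ for $\abs{x}<R(g)$, i.e.\ that evaluation commutes with Cauchy products of convergent series; this is standard and unproblematic, but worth a sentence if you write it up. The trade-off is the expected one: the paper's proof is shorter and leans on a standard reference, while yours is self-contained and makes explicit exactly which estimates drive the result.
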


\begin{proof}
Let $f(X) = \sum_{n=0}^\infty a_nX^n$ and $g(X) = \sum_{n=1}^\infty b_nX^n$.

Let $x \in K$ with $\abs{x} < R^\dag(g)$.
Let $M_{\abs{x}}(g) = \sup_{n \geq 1} \abs{b_nx^n}$.
By inequality \eqref{eqn:bnxn} from the proof of \cref{padic-Rdag-bound} and by our hypotheses, we have
\[ M_{\abs{x}}(g) \leq R^\dag(g)^{-1}\abs{x} < 1 \leq R(f). \]
Since also $\abs{x} < R(g)$, the conditions of \cite[6.1.5, Theorem]{Rob00} are satisfied.  Therefore $R(f \circ g) > \abs{x}$ and $f(g(x)) = (f \circ g)(x)$.
\end{proof}

Over $\CC$, composition of power series is described by the following classical result:
\begin{lemma} \label{arch-composition}
Let $f, g \in \powerseries{\CC}{X}$ with $g(0) = 0$.
Let
\[ s = \sup\{ r \leq R(g) : \abs{g(x)} < R(f) \text{ for all } x \in \CC \text{ with } \abs{x} < r \}. \]
Note that if $R(f) > 0$ and $R(g) > 0$, then $s > 0$.
Then $R(f \circ g) \geq s$ and, for all $x \in \CC$ satisfying $\abs{x} < s$, we have $f(g(x)) = (f \circ g)(x)$.
\end{lemma}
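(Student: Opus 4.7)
The plan is to reduce the statement to two standard facts: that an analytic function on an open disc is represented by its Taylor series there, and that the Taylor coefficients of a composition can be computed formally. The key observation is that, despite being phrased in terms of formal power series, the lemma is really asserting that the formal composition $(f \circ g)(X)$ coincides with the Taylor expansion at $0$ of the \emph{function} $x \mapsto f(g(x))$ on the disc $\{\abs{x} < s\}$.

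First I would observe that $f \circ g$ makes sense as an analytic function on $\{\abs{x} < s\}$. Indeed, $g$ is analytic (equivalently, holomorphic) on $\{\abs{x} < R(g)\}$, hence on $\{\abs{x} < s\}$ since $s \leq R(g)$. By the very definition of $s$, the image $g(\{\abs{x} < s\})$ is contained in $\{\abs{y} < R(f)\}$, on which $f$ is analytic. So $f \circ g$ is a composition of holomorphic maps and is therefore holomorphic on $\{\abs{x} < s\}$. (If one prefers an explicit absolute-convergence argument: for any $x_0$ with $\abs{x_0} < s$, pick $r'$ with $\abs{x_0} < r' < s$; by continuity of $\abs{g}$ on the compact disc $\{\abs{x} \leq r'\}$ and the definition of $s$, the supremum $M := \sup_{\abs{x} \leq r'} \abs{g(x)}$ is strictly less than $R(f)$, whence $\sum_n \abs{a_n} M^n < \infty$ bounds $\sum_n a_n g(x)^n$ uniformly on $\{\abs{x} \leq r'\}$.)

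Next, every holomorphic function on an open disc in $\CC$ is the sum of its Taylor series about the centre, which converges on that whole disc. Applying this to $f \circ g$ on $\{\abs{x} < s\}$ gives a power series with radius of convergence at least $s$ that represents $f(g(x))$ on the disc. It remains only to identify this Taylor series with the formal composition $(f \circ g)(X)$ from the statement.

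For this last step, the plan is to use truncation: for each $k$, the $k$-th Taylor coefficient of $f \circ g$ at $0$ depends only on the Taylor coefficients of $f$ and $g$ at $0$ up to order $k$. Replacing $f$ and $g$ by their degree-$k$ Taylor polynomials, the composition becomes a polynomial whose coefficients (through degree $k$) are computed by literal substitution, which by definition matches the first $k+1$ coefficients of the formal power series $(f \circ g)(X) = \sum_n a_n g(X)^n$ (a well-defined formal series since $g(0) = 0$ forces $g(X)^n$ to start in degree $n$). Hence the Taylor series of $f \circ g$ at $0$ coincides coefficient-by-coefficient with the formal $(f \circ g)(X)$, which gives $R(f \circ g) \geq s$ and $(f \circ g)(x) = f(g(x))$ for $\abs{x} < s$. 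No step poses a real obstacle; the only minor care needed is in handling the strict inequality $\abs{g(x)} < R(f)$ when passing to a compact subdisc, which is what the choice of $r'$ above takes care of.
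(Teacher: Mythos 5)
Your proof is correct. The paper itself gives no proof of this lemma, stating it as a classical fact about composition of convergent power series over $\CC$; your argument (holomorphy of the composite on $\{\abs{x}<s\}$, representation by its Taylor series on the whole disc, and identification of that Taylor series with the formal composition via truncation, using $g(0)=0$ so that the formal $f\circ g$ is well defined degree by degree) is exactly the standard justification, and the one delicate point — passing to a compact subdisc to get $\sup\abs{g}<R(f)$ — is handled properly.
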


\subsection{G-functions}

\begin{definition} \cite[p.~1]{And89}
A \defterm{G-function} is a power series $y(X) = \sum_{n \geq 0} a_nX^n$ whose coefficients $a_n$ belong to a number field $K$,  which satisfies the following conditions:
\begin{enumerate}
\item there exists $\newC{G-function-arch-base}>0$ such that $\abs{a_n} < \refC{G-function-arch-base}^n$ for all $n$ and for all archimedean absolute values $\abs{\cdot}$ of~$K$;
\item there exists a sequence of positive integers $(d_n)$ which grows at most geometrically such that $d_na_m$ is an algebraic integer for all $m \leq n$;
\item $y(X)$ satisfies a linear homogeneous differential equation
\[ \frac{d^\mu}{dX^\mu}y + \gamma_{\mu-1} \frac{d^{\mu-1}}{dX^{\mu-1}}y + \dotsb + \gamma_1 \frac{d}{dX}y + \gamma_0y = 0 \]
with coefficients $\gamma_i \in K(X)$.
\end{enumerate}
\end{definition}


\begin{definition}
Let $y_1, \dotsc, y_n \in \powerseries{\Qbar}{X}$.
Let $\tilde Q \in \Qbar[X][Y_1, \dotsc, Y_n]$ be a homogeneous polynomial.
We say that $\tilde Q$ is a \defterm{functional relation} between $y_1, \dotsc, y_n$ if $\tilde Q(X)(y_1(X), \dotsc, y_n(X)) = 0$ in $\powerseries{\Qbar}{X}$.
\end{definition}

\begin{definition}
Let $y_1, \dotsc, y_n \in \powerseries{\Qbar}{X}$ and let $\xi \in \Qbar$.
Let $Q \in \Qbar[Y_1, \dotsc, Y_n]$ be a homogeneous polynomial.
Let $K$ be a number field which contains~$\xi$ and all the coefficients of $y_1, \dotsc, y_n$ and~$Q$.
We say that:
\begin{enumerate}
\item $Q$ is an \defterm{$\iota$-adic relation} between the evaluations at $\xi$ of $y_1, \dotsc, y_n$ (for an embedding $\iota$ of~$K$) if $\abs{\xi^\iota} < \min \{ 1, R(y_1^\iota), \dotsc, R(y_n^\iota) \}$ and
\[ Q^\iota(y_1^\iota(\xi^\iota), \dotsc, y_n^\iota(\xi^\iota)) = 0. \]
\item $Q$ is a \defterm{global relation} between the evaluations at $\xi$ of $y_1, \dotsc, y_n$ if it is an $\iota$-adic relation between the evaluations at~$\xi$ for every embedding $\iota$ of~$K$ satisfying $\abs{\xi^\iota} < \min \{ 1, R(y_1^\iota), \dotsc, R(y_n^\iota) \}$. Of course, the set of such embeddings may be empty.
\item $Q$ is a \defterm{trivial relation} between $y_1, \dotsc, y_n$ at~$\xi$ if it is the specialisation at $X=\xi$ of a functional relation $\tilde Q \in \Qbar[X][Y_1, \dotsc, Y_n]$ between $y_1, \dotsc, y_n$, where $\tilde Q$ is homogeneous of the same degree as~$Q$.
\end{enumerate}
\end{definition}

The following theorem of Andr\'e is a slight strengthening of a theorem due to Bombieri \cite[Thm.~4]{Bom81}. 

\begin{theorem} \cite[VII, Thm.~5.2]{And89} \label{hasse-principle}
Let $y_1, \dotsc, y_n \in \powerseries{\Qbar}{X}$ be G-functions which satisfy a linear system of differential equations
\begin{equation} \label{eqn:diff}
\frac{dy_i}{dX} = \sum_{j=1}^n a_{ij} y_j
\end{equation}
for some $a_{ij} \in \Qbar(X)$.
Then there exist constants $\newC{hasse-multiplier}$, $\newC{hasse-exponent}$, depending only on $y_1, \dotsc, y_n$, such that, for all $\xi \in \ov\QQ \setminus \{0\}$ and all $\delta \in \ZZ_{>0}$, if $\xi$ is not a singularity of the differential equation~\eqref{eqn:diff} and there exists a non-trivial global relation of degree~$\delta$ between the evaluations of $y_1, \dotsc, y_n$ at~$\xi$, then
\[ h(\xi) \leq \refC{hasse-multiplier} \delta^{\refC{hasse-exponent}}. \]
\end{theorem}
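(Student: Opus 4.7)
The strategy I would adopt is the one introduced by Bombieri in~\cite{Bom81}. First, I would reduce to the case where $y_1, \dotsc, y_n$ are linearly independent over $\Qbar(X)$, since replacing them by a basis of their $\Qbar(X)$-span preserves the differential system, the G-function property, the convergence data, and the notion of non-trivial global relation.

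Second, I would pass to the $\delta$-th symmetric power of the system. The monomials $y_1^{i_1}\cdots y_n^{i_n}$ with $i_1+\dotsb+i_n=\delta$ form a finite family of G-functions, whose sizes and denominator growth can be estimated polynomially in~$\delta$ in terms of those of the original $y_i$, and which satisfy a linear differential system obtained from~\eqref{eqn:diff} by Leibniz's rule. A non-trivial global relation $Q$ of degree~$\delta$ at $\xi$ translates into a \emph{linear} global relation between the evaluations of these monomials at~$\xi$, valid at every place where $\xi$ lies inside the common radius of convergence, and still non-trivial in the sense that it is not the specialisation at $X=\xi$ of a single linear $\Qbar(X)$-relation among the monomials themselves.

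The heart of the argument is then the fundamental Diophantine inequality of the theory of G-functions, in its linear form, applied to this enlarged system. Using Siegel's lemma on the size and denominator data of the symmetric power system, one constructs an auxiliary polynomial of controlled height which, combined with the non-trivial linear relation, produces a non-zero algebraic number. Estimating its absolute value from above at each place using the local radii of convergence and the sizes of the $y_i$, and comparing with the product formula applied to~$\xi$, forces an inequality of the shape $h(\xi) \leq \refC{hasse-multiplier}\delta^{\refC{hasse-exponent}}$, with the exponents depending only on the original system.

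The main obstacle is the bookkeeping in the symmetric-power passage: one must verify that the sizes, denominator growth, and dimension of the space of linear $\Qbar(X)$-relations of the enlarged system all scale polynomially in~$\delta$, so that the final Diophantine estimate produces a polynomial rather than exponential dependence on~$\delta$. This is also the point at which André's refinement of Bombieri's original theorem is essential: the present statement treats only specialisations of \emph{single} functional relations of the same degree as trivial (rather than ruling out membership in the full $\Qbar(X)$-ideal generated by functional relations), and one must check that this stronger notion of non-triviality is preserved when passing to the symmetric power system, so that the fundamental inequality can still be invoked in its linear form.
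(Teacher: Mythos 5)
You should note first that the paper does not prove \cref{hasse-principle} at all: it is quoted verbatim (up to normalisation of absolute values) from \cite[VII, Thm.~5.2]{And89}, itself a strengthening of \cite[Thm.~4]{Bom81}, and the only original content the paper adds is the remark immediately following the statement, namely that nothing in Andr\'e's proof --- including the constants in [VII, Prop.~3.5] --- depends on the number field containing $\xi$ and the coefficients of the relation, so the theorem holds for relations with $\Qbar$-coefficients with constants depending only on $y_1,\dotsc,y_n$. Your outline is essentially a reconstruction of the cited Bombieri--Andr\'e argument: the reduction of a degree-$\delta$ relation to a linear relation among the degree-$\delta$ monomials via the symmetric power of the system, the polynomial-in-$\delta$ control of sizes and denominators of that enlarged system, the Siegel-lemma auxiliary construction and the product formula, and the verification that the paper's notion of non-triviality (not a specialisation of a homogeneous functional relation of the same degree) transfers to a linear non-triviality for the monomials. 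Those are indeed the right ingredients and the right pressure points, so the strategy is consistent with the source the paper relies on.

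Two caveats. First, as a proof your text defers the entire core to ``the fundamental Diophantine inequality of the theory of G-functions, in its linear form'': that inequality \emph{is} the substance of \cite{Bom81} and of \cite[VII]{And89} (Pad\'e-type approximations to solutions of a G-operator, zero estimates, and the global size/Galochkin-type bounds), so what you have is a correct roadmap of the known proof rather than an argument one could check independently. Second, you do not address the uniformity that the statement actually requires and that the paper had to flag: $\xi$ and the coefficients of $Q$ range over $\Qbar$, so the Siegel-lemma and product-formula estimates must be carried out with constants independent of the number field $K$ over which one works (this is automatic with the normalised weights $[K_v:\QQ_v]/[K:\QQ]$, but it is exactly the point that justifies stating the theorem over $\Qbar$, and it is the only step the paper itself verifies). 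Relatedly, your preliminary reduction to $\Qbar(X)$-linearly independent $y_i$ needs a short argument that a non-trivial global relation stays non-trivial (and stays defined, i.e.\ the dependence coefficients are regular at $\xi$) after substitution; it is true, but not free, and it is not needed if one follows Andr\'e, who works with the full system and its space of functional relations directly.
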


\begin{remark}\label{rem:are-G-fn}
Under the hypothesis that $y_1,\dotsc,y_n \in \powerseries{\Qbar}{X}$ satisfy a system of differential equations~\eqref{eqn:diff}, the condition that they are G-functions is equivalent to the condition $\sigma(y_1,\dotsc,y_n) < \infty$ in \cite[VII, Thm.~5.2]{And89} (see \cite[I, 1.3 and~1.4]{And89}).
\end{remark}

\begin{remark}
    The definitions of global and non-trivial relations in \cite[VII, 5.2]{And89} refer only to polynomials with coefficients in some number field~$K$.
However, nothing in the proof of \cite[VII, Thm.~5.2]{And89}, including the constants which appear in \cite[VII, Prop.~3.5]{And89}, depends on the number field~$K$, so \cref{hasse-principle} is valid for relations with coefficients in $\Qbar$.
\end{remark}

\begin{remark}
    It is not clear whether those $\xi$ for which $\abs{\xi^\iota} > \min \{ 1, R(y_1^\iota), \dotsc, R(y_n^\iota) \}$ for all embeddings $\iota$ of $K$ are formally covered by \cite[VII, 5.2]{And89}. For this case, we refer to \cite[Section 12]{Papas}.
\end{remark}

We can deduce the following theorem from \cref{hasse-principle}, which is the form in which we shall actually apply it.
Since the deduction of \cref{hasse-principle-E} from \cref{hasse-principle} is only an exercise in \cite{And89}, and we are not aware of a full proof of \cref{hasse-principle-E} in the literature, we have included a proof of this deduction.

\begin{theorem}\label{hasse-principle-E} \cite[Introduction, Thm.~E]{And89}
    Let $y_1,\ldots,y_n\in\powerseries{\Qbar}{X}$ be $G$-functions. There exist constants $\newC{hasse-multiplier-E}$, $\newC{hasse-exponent-E}$ depending only on $y_1, \dotsc, y_n$, such that, for all $\xi \in \ov\QQ \setminus \{0\}$ and all $\delta \in \ZZ_{>0}$, if there exists a non-trivial global relation of degree~$\delta$ between the evaluations of $y_1, \dotsc, y_n$ at~$\xi$, then
\[ h(\xi) \leq \refC{hasse-multiplier-E} \delta^{\refC{hasse-exponent-E}}. \]
\end{theorem}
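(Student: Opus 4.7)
The strategy is to reduce \cref{hasse-principle-E} to \cref{hasse-principle} by enlarging $y_1,\dotsc,y_n$ to a list which satisfies a first-order linear ODE system. Each G-function $y_i$ satisfies a linear homogeneous ODE of some order $\mu_i$ with coefficients in $\Qbar(X)$, so I introduce auxiliary power series $Y_{i,k} := d^k y_i/dX^k$ for $0 \le k < \mu_i$. After relabelling the enlarged list as $z_1,\dotsc,z_N$, one has a first-order system $dz_j/dX = \sum_\ell a_{j\ell}(X) z_\ell$ with $a_{j\ell}\in\Qbar(X)$: for $z_j = y_i^{(k)}$ with $k<\mu_i-1$, the derivative is simply the next element of the chain, and for $z_j = y_i^{(\mu_i-1)}$ it is expressed using the original ODE.

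Next, I verify the quantitative hypothesis of \cref{hasse-principle} for $z_1,\dotsc,z_N$. A standard fact is that every derivative of a G-function is again a G-function (the archimedean size bound and the geometric growth of common denominators are both preserved by formal differentiation, up to modified constants, and the differential-equation condition is inherited). Bounding the maximum over the enlarged index set in the hypothesis of \cref{hasse-principle} by a sum over the individual $z_j$'s, and using conditions~(2) and~(3) in the definition of G-function for each $z_j$, one obtains the required finiteness of the $\limsup$.

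The delicate step is transferring relations. Given a non-trivial global relation $Q(Y_1,\dotsc,Y_n)$ of degree $\delta$ between the $y_i$'s, I view $Q$ as a homogeneous polynomial in the enlarged variables with no dependence on derivative variables. It remains a global relation between $z_1,\dotsc,z_N$ because $R(y_i^{(k),\iota}) \ge R(y_i^\iota)$ at every place $\iota$ (both archimedean and non-archimedean), so the convergence hypothesis for the enlarged list coincides with the original one. The remaining question is non-triviality in the enlarged sense: suppose some $\tilde Q \in \Qbar[X][z_1,\dotsc,z_N]$, homogeneous of degree $\delta$, is a functional relation with $\tilde Q(\xi,z_1,\dotsc,z_N) = Q(Y_1,\dotsc,Y_n)$. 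Writing $\tilde Q$ as $Q(Y_1,\dotsc,Y_n) + (X-\xi)\tilde R(X,z_1,\dotsc,z_N)$ and using the first-order system to rewrite each appearance of a derivative variable in terms of the original variables and lower-order derivatives, one carries out an iterated elimination, using $\xi \neq 0$ (and the fact that $\xi$ is not a singularity), to produce a functional relation $\tilde Q' \in \Qbar[X][Y_1,\dotsc,Y_n]$ with $\tilde Q'(\xi,Y_1,\dotsc,Y_n) = Q$, contradicting the assumed non-triviality in the original sense. This elimination argument, controlling how derivative variables couple to the original variables via the ODE, is the \textbf{main technical obstacle}: the enlargement introduces many new ``differential'' trivial relations, and one must show that none of them can absorb the given $Q$.

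Finally, applying \cref{hasse-principle} to the enlarged system and the same $\xi,\delta$ yields $h(\xi) \leq \newC* \delta^{\newC*}$ for all $\xi \in \Qbar\setminus\{0\}$ which are not singularities of the first-order system. The singularities form a finite subset of $\Qbar$ of bounded height, so they can be absorbed into the constants $\refC{hasse-multiplier-E}$ and $\refC{hasse-exponent-E}$ without affecting the shape of the bound.
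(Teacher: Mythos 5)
Your overall reduction (enlarge $y_1,\dotsc,y_n$ by their derivatives to get a first-order system, check the quantitative hypothesis, transfer the relation, apply \cref{hasse-principle}, and absorb finitely many bad points into the constants) is the same skeleton as the paper's proof, and the easy parts are fine: the $z_j$'s are G-functions, and since derivatives have radius of convergence at least that of the original series at every place while the $y_i$ themselves remain in the enlarged list, a global relation between the $y_i$'s is indeed a global relation between the $z_j$'s. The genuine gap is exactly at the step you yourself flag as the main obstacle: transferring \emph{non-triviality}. Your proposed fix --- writing $\tilde Q = Q + (X-\xi)\tilde R$ and then ``using the first-order system to rewrite each appearance of a derivative variable in terms of the original variables'' --- does not make sense. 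The ODE system is a relation between the power series $z_j$ and their derivatives $dz_j/dX$; it gives no polynomial identity expressing the formal variable attached to $y_i^{(k)}$ ($k\geq 1$) in terms of the variables attached to $y_1,\dotsc,y_n$, so there is nothing to substitute. The functional relations among $z_1,\dotsc,z_m$ are simply the kernel of the evaluation map to $\powerseries{\Qbar}{X}$, and the question is whether specialisation at $X=\xi$ commutes with eliminating the derivative variables from that ideal. That is false in general for an arbitrary specialisation point, and your sketch gives no argument for it; moreover the statement you aim for (validity at every nonsingular $\xi\neq 0$) is stronger than what is needed or what is proved in the paper, since the bad specialisation points need not be singularities of the differential system.

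The paper handles this with \cref{trivial-relations-extra-variables}: choose a Gr\"obner basis $B$ of the ideal $I$ of functional relations among $z_1,\dotsc,z_m$ for a lex order with the derivative variables largest and $X$ smallest; outside the finite set $W$ of roots of the leading coefficients $h_b(X)$, specialisation at $\xi$ sends $B$ to a Gr\"obner basis of ${\rm sp}_\xi(I)$, and elimination theory then gives ${\rm sp}_\xi(I)\cap\Qbar[Y_1,\dotsc,Y_n]\subset {\rm sp}_\xi\bigl(I\cap\Qbar[X][Y_1,\dotsc,Y_n]\bigr)$, which is precisely the non-triviality transfer. Since $W$ is finite and depends only on $z_1,\dotsc,z_m$, it is absorbed into the constants, just as you do with the singularities. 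To repair your proof you need to prove a lemma of this specialisation-commutes-with-elimination type (or otherwise control the exceptional $\xi$'s); the elimination-via-the-ODE argument as written cannot be carried out.
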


\begin{proof}
    By definition, each $y_i$ satisfies a linear homogeneous differential equation of some order $d_i$. We label the power series 
    \[
    \frac{d^\mu}{dX^\mu}y_i\text{ for }i=1,\ldots,n\text{ and }\mu=0,1,\ldots,d_i
    \] 
    as $z_1, \dotsc, z_m$ in some order, such that $z_i=y_i$ for $1 \leq i \leq n$.
    Then there exists $\Gamma\in\rM_m(\Qbar(X))$ such that
    \[\frac{d}{dX}z_i=\sum_{j=1}^m\Gamma_{ij}z_j.\]

    Let $W$ denote the finite set given by \cref{trivial-relations-extra-variables}, applied to $z_1,\dotsc,z_m$ and $n$.
    For $\xi \in \Qbar \setminus W$, the theorem follows from \cref{hasse-principle} applied to $z_1, \dotsc, z_m$.
    Since $W$ is a finite set which depends only on $z_1, \dotsc, z_m$, we can modify the constants so that the theorem holds for all $\xi \in \Qbar \setminus \{0\}$.
\end{proof}

\begin{lemma} \label{trivial-relations-extra-variables}
    Let $y_1,\ldots,y_m\in\powerseries{\Qbar}{X}$. There exists a finite set $W\subset\Qbar$ such that, for all $n\leq m$ and all $\xi \in \Qbar \setminus W$, a non-trivial relation between $y_1,\ldots,y_n$ at $\xi\in\Qbar$ is a non-trivial relation between $y_1,\ldots,y_m$ at $\xi$.
\end{lemma}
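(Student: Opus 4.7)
Define, for each $n \le m$, the prime ideal
\[
  V_n \;=\; \ker\!\bigl(\Qbar[X,Y_1,\dotsc,Y_n] \to \powerseries{\Qbar}{X},\ Y_i \mapsto y_i\bigr),
\]
and set $V_n(\xi) = \{\tilde Q(\xi,\cdot) : \tilde Q \in V_n\} \subset \Qbar[Y_1,\dotsc,Y_n]$, the ideal of trivial relations at~$\xi$. Since $V_n$ is homogeneous in the $Y$-variables, so is $V_n(\xi)$, and the degree-matching condition in the definition of triviality follows automatically. The contrapositive of the lemma thus reduces to producing a finite $W \subset \Qbar$ such that
\[
  V_m(\xi) \cap \Qbar[Y_1,\dotsc,Y_n] \;\subseteq\; V_n(\xi)
\]
for every $n \le m$ and every $\xi \in \Qbar \setminus W$, the reverse containment being tautological from $V_n \subset V_m$.

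The plan is a single Gröbner-basis elimination that simultaneously handles every $n$. Extend $V_m$ to $I := V_m \cdot \Qbar(X)[Y_1,\dotsc,Y_m]$ and compute a reduced Gröbner basis $\mathcal G = \{g_1,\dotsc,g_s\}$ of $I$ with respect to the lex order $Y_m > Y_{m-1} > \dotsb > Y_1$. Because $V_m$ is $\Qbar[X]$-saturated (its elements vanish in the integral domain $\powerseries{\Qbar}{X}$, so division by a nonzero polynomial in $X$ stays inside $V_m$), contraction of $I$ returns $V_m$, and likewise contraction of $I \cap \Qbar(X)[Y_1,\dotsc,Y_n]$ returns $V_n$. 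By the elimination theorem for lex Gröbner bases, the subset $\mathcal G_n := \mathcal G \cap \Qbar(X)[Y_1,\dotsc,Y_n]$ generates $V_n \cdot \Qbar(X)[Y_1,\dotsc,Y_n]$. Clear denominators so each $g_i \in \Qbar[X,Y_1,\dotsc,Y_m]$, and let $c_i(X) \in \Qbar[X]$ be the leading coefficient of $g_i$ in the $Y$-variables. Choose finite generating sets of $V_m$ and of each $V_n$ inside their respective polynomial rings (possible by Noetherianity), express each such generator as a $\Qbar(X)[Y]$-combination of $\mathcal G$ or~$\mathcal G_n$, and let $d(X) \in \Qbar[X]$ be a common denominator for all these finitely many expressions. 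Set
\[
  W \;=\; \{\xi \in \Qbar : c_i(\xi) = 0 \text{ for some } i,\text{ or } d(\xi) = 0\},
\]
which is finite.

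For $\xi \notin W$ two things follow. Firstly, the specialisations $g_i(\xi,\cdot)$ retain the leading $Y$-monomials of the $g_i$ because $c_i(\xi) \neq 0$; by standard stability of Gröbner bases under specialisation they form a Gröbner basis of the ideal they generate in $\Qbar[Y_1,\dotsc,Y_m]$, and the lex elimination property transfers. Secondly, $d(\xi) \neq 0$ allows us to specialise the chosen expressions of our $V_m$- and $V_n$-generators as combinations of $\mathcal G$ (respectively $\mathcal G_n$), showing that the ideal generated in $\Qbar[Y_1,\dotsc,Y_m]$ by $\{g_i(\xi,\cdot)\}$ equals $V_m(\xi)$, and that the ideal generated in $\Qbar[Y_1,\dotsc,Y_n]$ by $\{g_i(\xi,\cdot) : g_i \in \mathcal G_n\}$ equals $V_n(\xi)$. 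Combining the two, $V_m(\xi) \cap \Qbar[Y_1,\dotsc,Y_n] = V_n(\xi)$ for every $n \le m$, which is what is required.

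The main obstacle is ensuring that a \emph{single} finite set $W$ works uniformly for every $n$ and across all degrees of relations simultaneously. This requires carefully separating the two failure modes: loss of leading monomials under specialisation (controlled by the $c_i$), and failure of the ideal generated by the specialised $g_i$ to recover the intended $V_m(\xi)$ or $V_n(\xi)$ (controlled by~$d$). Both are zero sets of finitely many nonzero polynomials in~$\Qbar[X]$, so $W$ is finite, and the proof is complete.
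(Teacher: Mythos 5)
Your Gröbner-theoretic core is essentially the paper's argument (the paper runs the specialisation--elimination step directly over $\Qbar[X][Y_1,\dotsc,Y_m]$ with a lex order in which $X$ is last, citing the standard stability result, whereas you work over $\Qbar(X)[Y_1,\dotsc,Y_m]$ and clear denominators; that difference is cosmetic). The genuine problem is in your opening reduction. You take $V_n$ to be the full kernel of the evaluation map $Y_i \mapsto y_i$ and assert that ``$V_n$ is homogeneous in the $Y$-variables, so \dots\ the degree-matching condition in the definition of triviality follows automatically.'' This is false in general: the kernel contains inhomogeneous elements whose homogeneous components are not functional relations (for instance $Y_1-1$ when $y_1=1$, or $Y_2-Y_1^2$ when $y_2=y_1^2$), and the lemma is stated for arbitrary power series -- indeed in the paper it is applied to families of G-functions which may well satisfy inhomogeneous relations over $\Qbar[X]$. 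Because of this, your final containment $V_m(\xi)\cap\Qbar[Y_1,\dotsc,Y_n]\subseteq V_n(\xi)$ does not give the lemma: knowing that a homogeneous $Q$ of degree $\delta$ lies in ${\rm sp}_\xi(V_n)$ only produces some (possibly inhomogeneous) preimage $f$ in the kernel, and its degree-$\delta$ component, which is what specialises to $Q$, need not be a functional relation at all. So you cannot conclude that $Q$ is a \emph{trivial} relation between $y_1,\dotsc,y_n$ at $\xi$ in the sense of the paper, which requires a homogeneous functional relation of the same degree specialising to $Q$.

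The repair is exactly the paper's choice of ideal: work not with the kernel but with the ideal $I_n$ generated by the homogeneous functional relations (equivalently, the polynomials all of whose $Y$-homogeneous components lie in the kernel). This ideal is homogeneous by construction, so a homogeneous element of ${\rm sp}_\xi(I_n)$ of degree $\delta$ is the specialisation of the degree-$\delta$ component of a preimage, which is itself a functional relation -- this is where degree-matching really comes from. All of your auxiliary steps survive this replacement: $I_m$ is still saturated with respect to $\Qbar[X]\setminus\{0\}$ (if $c(X)f\in I_m$ then each $c\,f_d$ kills the $y_i$ in the domain $\powerseries{\Qbar}{X}$, hence so does $f_d$), the contraction statements $I_m\cdot\Qbar(X)[Y]\cap\Qbar[X][Y]=I_m$ and $I_m\cap\Qbar[X][Y_1,\dotsc,Y_n]=I_n$ hold, and the specialisation and lex-elimination argument then proves ${\rm sp}_\xi(I_m)\cap\Qbar[Y_1,\dotsc,Y_n]\subseteq{\rm sp}_\xi(I_n)$ outside a finite set, which is what the lemma needs. (A smaller point: your set $W$ consists of zeros of the leading coefficients $c_i$ and of the single denominator $d$; to invoke specialisation stability for a basis computed over $\Qbar(X)$ one should either note that, the reduced basis being monic, the denominators occurring in the S-polynomial reduction certificates divide products of the $c_i$, or simply enlarge $W$ by the finitely many extra denominators -- either way the set stays finite, so this is not a substantive obstacle.)
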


\begin{proof}
    Let $I$ denote the ideal of $\Qbar[X][Y_1,\ldots,Y_m]$ generated by the (homogeneous) functional relations between $y_1,\ldots,y_m$. In other words, $I$ consists of the $f\in\Qbar[X][Y_1,\ldots,Y_m]$ satisfying: each homogeneous component of $f$ is in the kernel of the ring homomorphism
    \[\Qbar[X][Y_1,\ldots,Y_m]\to\powerseries{\Qbar}{X},\ Y_i\mapsto y_i.\]
    (Here ``homogeneous'' means homogeneous as a polynomial in $Y_1,\ldots,Y_m$ with coefficients in $\Qbar[X]$.)
    For $n\leq m$, the ideal of $\Qbar[X][Y_1,\ldots,Y_n]$ generated by functional relations between $y_1,\ldots,y_n$ is $I_n:=I\cap \Qbar[X][Y_1,\ldots,Y_n]$. 
    
    For any polynomial ring $R$ over  $\Qbar[X]$ and any $\xi\in\Qbar$, let ${\rm sp}_\xi$ denote the specialisation map on $R$ induced by $X\mapsto \xi$.
    By definition, the ideal of $\Qbar[Y_1,\dotsc,Y_m]$ generated by trivial relations between $y_1,\dotsc, y_m$ at~$\xi$ is ${\rm sp}_\xi(I)$, while the ideal  of $\Qbar[Y_1,\dotsc,Y_n]$ generated by trivial relations between $y_1,\dotsc, y_n$ at~$\xi$ is ${\rm sp}_\xi(I_n)$.
    Thus, in order to prove the lemma, it suffices to show that ${\rm sp}_{\xi}(I)_n:={\rm sp}_{\xi}(I)\cap\Qbar[Y_1,\ldots,Y_n]$ is contained in ${\rm sp}_{\xi}(I_n)$ for all $\xi\in\Qbar$ outside of a finite set.

    Consider the lexicographic monomial order on $\Qbar[X,Y_1,\ldots,Y_m]$ with $Y_m>Y_{m-1}>\cdots>Y_1>X$ and let $B$ denote a Gr\"obner basis for $I$ with respect to this ordering. 
    By \cite[Chapter 6, \S3, Proposition 1(ii), p308]{CLO15}, if we write each $b\in B$ as 
    \begin{align}\label{eqn:binB}
b=h_b(X)\mathbf{Y}^{\alpha_b}+(\text{terms}<\mathbf{Y}^{\alpha_b})
    \end{align}
    and let $W$ denote the union of the roots of the $h_b(X)$, then, for all $\xi\in\Qbar\setminus W$, the set ${\rm sp}_\xi(B)$ is a Gr\"obner basis for ${\rm sp}_\xi(I)$. Therefore, by \cite[Chapter 3, \S1, Theorem 2, p122]{CLO15}, ${\rm sp}_\xi(B)_n:={\rm sp}_\xi(B)\cap\Qbar[Y_1,\ldots,Y_n]$ is a Gr\"obner basis for ${\rm sp}_\xi(I)_n$. 

For all $\xi \in \Qbar \setminus W$ and $b \in B$, we have $h_b(\xi) \neq 0$.  Hence, by \eqref{eqn:binB} and our choice of monomial order, ${\rm sp}_{\xi}(b)\in\Qbar[Y_1,\ldots,Y_n]$ if and only if $b\in\Qbar[X][Y_1,\ldots,Y_n]$.
    That is, ${\rm sp}_\xi(B)_n={\rm sp}_\xi(B_n)$ for all $\xi\notin W$. We conclude that ${\rm sp}_\xi(I)_n$ is contained in ${\rm sp}_\xi(I_n)$ for all $\xi\notin W$. This concludes the proof. 
\end{proof}

\begin{remark}
The exceptional set~$W$ in \cref{trivial-relations-extra-variables} depends on the enlarged set of power series $y_1,\dotsc,y_m$, not just the smaller set $y_1,\dotsc,y_n$.
Indeed, given \emph{any} $\xi \in \Qbar \setminus \{0\}$ and any homogeneous polynomial $Q \in \Qbar[Y_1,\dotsc,Y_n]$, it is always possible to choose an additional power series $y_{n+1}$ such that $Q$ becomes trivial as a relation between $y_1,\dotsc,y_n,y_{n+1}$ at~$\xi$.
To achieve this, assume that $Q(y_1,\dotsc,y_n) \neq 0$ (otherwise, $Q$ is already a trivial relation between $y_1,\dotsc,y_n$, and it remains so after adding any additional power series).
Let $\ell = \ord_{X=0} Q(y_1,\dotsc,y_n)$ and let $d = \deg(Q)$.
Let
\[ z = \frac{Q(y_1,\dotsc,y_n)}{X^\ell(X-\xi)} \in \powerseries{\Qbar}{X}. \]
Then $z(0) \neq 0$, so $z$ has a $d$-th root in $\powerseries{\Qbar}{X}$, which we call~$y_{n+1}$.
Now
\[ \tilde Q(X)(Y_1,\dotsc,Y_{n+1}) = Q(Y_1,\dotsc,Y_n) - X^\ell(X-\xi)Y_{n+1}^d \]
is a homogeneous functional relation between $y_1,\dotsc,y_{n+1}$, which specialises to~$Q$ at~$\xi$.

This implies that our proof of \cref{hasse-principle-E} is not effective, if we assume given as input only information about the functional relations between $y_1,\dotsc,y_n$.

The proof of \cref{trivial-relations-extra-variables} is effective (in the sense that it gives an algorithm to compute~$W$) if we assume given as input a set of generators for the functional relations between the enlarged set of power series $y_1,\dotsc,y_m$.
It follows that our proof of \cref{hasse-principle-E} is effective if given as input generators for the functional relations among the power series denoted $z_1,\dotsc,z_m$ in the proof of \cref{hasse-principle-E} (that is, $y_1,\dotsc,y_n$ \emph{and certain derivatives}), as well as the information about singularities, size and radius of convergence needed to make \cref{hasse-principle} effective.
\end{remark}

\section{Families of elliptic curves} \label{sec:families}

\subsection{The ``\texorpdfstring{$1/j$}{1/j}'' family of elliptic curves} \label{subsec:j-family}

Let $S = \bA_\bQ^1 \setminus \{1/1728\}$ and let $S^* = S \setminus \{0\}$.
Let
\begin{multline*}
\cE = \bigl\{ ([x:y:z], s) \in \bP^2 \times S :
\\ y^2z + xyz = x^3 + \frac{36s}{1728s-1}xz^2 + \frac{s}{1728s-1}z^3 \bigr\} \setminus \{([0:0:1], 0)\}.
\end{multline*}
This is the $S$-smooth locus of an elliptic scheme $\pi \colon \cE \to S$.
Indeed, the fibres of $\cE$ over $S^*$ are elliptic curves and $\cE_s$ has $j$-invariant $1/s$.
The fibre~$\cE_0$ is the smooth locus of a nodal cubic.
Therefore, the standard Weierstrass group law makes $\cE$ into an $S$-group scheme, with $\cE_0 \cong \bG_m$.

We obtain an invariant holomorphic differential form on $\cE/S$ by setting
\[ \omega = \frac{dx}{2y+x} \in \Omega^1(\cE/S). \]

\subsection{The Tate family}

Let $\iota \colon \QQ \to \CC_\iota$ denote an embedding of $\QQ$.
Thus $\CC_\iota = \CC$ or $\CC_p$.
Let $\Delta_\iota$ denote the open unit disc $D(0, 1, \CC_\iota)$ and let $\Delta_\iota^* = \Delta_\iota \setminus \{0\}$.

We shall use \cite[Chapter~V]{SilATAEC} and \cite{TateCurves} as our main references for Tate curves.
Following these references, we will work mostly with explicit power series.  We will require rigid geometry only in equation~\eqref{eqn:phi-omega} and section~\ref{sec:padic-rel}, where it is needed to talk about the pullback of differential forms by rigid morphisms.
For an outline of the theory of differential forms on rigid spaces, see \cite[sec.~1]{BLR95}.

Note that \cite[Chapter~V]{SilATAEC} is stated in terms of finite extensions of $\QQ_p$ or~$\ov\QQ_p$, rather than over~$\CC_p$.
However, as noted in \cite[Remark~V.3.1.2]{SilATAEC}, the key theorem \cite[Theorem~V.3.1]{SilATAEC} remains valid over any non-archimedean complete field, in particular over $\CC_p$.

The $\iota$-adic Tate family of elliptic curves is defined, as in \cite[Theorems V.1.1 and~V.3.1]{SilATAEC} and \cite[(13)]{TateCurves}, by
\begin{multline} \label{eqn:delta-family}
\aE_\iota = \bigl\{ ([x:y:z], q) \in \PP^2(\CC_\iota) \times \Delta_\iota :
\\ y^2z + xyz = x^3 + \tilde{a}_4^\iota(q)xz^2 + \tilde{a}_6^\iota(q)z^3 \bigr\} \setminus \{([0:0:1], 0)\},
\end{multline}
where
\begin{align*}
    \tilde{a}_4(q) & = -5s_3(q) = -5q - 45q^2 - 140q^3 - \dotsb \in \powerseries{\ZZ}{q},
\\ \tilde{a}_6(q) & = -\frac{5s_3(q) + 7s_5(q)}{12} = -q - 23q^2 - 154q^3 - \dotsb \in \powerseries{\ZZ}{q},
\\  s_k(q) & = \sum_{n=1}^\infty \sigma_k(n)q^n \in \powerseries{\ZZ}{q},
\\ \sigma_k(n) & = \sum_{d|n, d > 0} d^k.
\end{align*}
For each $q \in \Delta^*_\iota$, the fibre $\aE_{\iota,q}$ is an elliptic curve over $\CC_\iota$.
The $j$-invariant of $\aE_{\iota,q}$ is given by the $\iota$-adic evaluation of the $q$-expansion of the standard $j$-function:
\[ j(q) = q^{-1} + 744 + 196884q + \dotsb \in \powerseries{\QQ}{q}[1/q]. \]
Similarly to Section~\ref{subsec:j-family}, the fibre $\aE_{\iota,0}$ is the smooth locus of a nodal cubic, so the Weierstrass group law makes $\aE_{\iota,0}$ into an algebraic group over~$\CC_\iota$, isomorphic to~$\bG_m$.

Again by \cite[Theorems V.1.1, V.3.1]{SilATAEC} or by \cite[Theorem~1]{TateCurves}, for each $q \in \Delta_\iota^*$, there is a surjective homomorphism $\phi_{\iota,q} \colon \GG_m(\CC_\iota) \to \aE_{\iota,q}$ with kernel~$q^\ZZ$.
The morphism $\phi_{\iota,q}$ is defined by explicit series and we can verify that, if we substitute $q=0$ into these series, we obtain an isomorphism $\phi_{\iota,0} \colon \GG_{m,\CC_\iota} \to \aE_{\iota,0}$.
Furthermore, $\phi_{\iota,q}$ is (the map on $\CC_\iota$-points induced by) a morphism of complex or rigid analytic spaces for each~$q$ \cite[sec.~5.1]{FvdP04}.

Define an invariant differential form on $\aE_\iota/\Delta_\iota$ by
\[ \tilde\omega_\iota = \frac{dx}{2y+x} \in \Omega^1(\aE_\iota/\Delta_\iota). \]
Calculation, or \cite[p.~15]{TateCurves}, shows that, if $u$ denotes the standard coordinate on $\GG_m$, then
\begin{equation} \label{eqn:phi-omega}
\phi_{\iota,q}^*(\tilde\omega_{\iota,q}) = \frac{du}{u}.
\end{equation}
This should be interpreted as an equation of differential forms on complex or rigid analytic spaces over~$\CC_\iota$ (depending on whether $\CC_\iota$ is archimedean or non-archimedean).

\subsection{Map from the Tate family to the ``\texorpdfstring{$1/j$}{1/j}'' family}

Let $\Delta'_\iota = \Delta_\iota$ if $\iota$ is non-archimedean and $D(0, e^{-2\pi}, \CC_\iota)$ if $\CC_\iota$ is archimedean.
Note that $j^\iota$ does not take values $0$ or $1728$ on $\Delta_\iota'$.  Hence $1/j^\iota$ maps $\Delta_\iota'$ into $S^\iota(\CC_\iota)$ (this is the reason for choosing $\Delta_\iota'$ smaller than $\Delta_\iota$ for archimedean emebddings~$\iota$).

For each $q \in \Delta'_\iota \setminus \{0\}$, the elliptic curves $\aE_{\iota,q}$ and $\cE^\iota_{1/j^\iota(q)}$ have the same $j$-invariant, so they are isomorphic over~$\CC_\iota$.
We now show that we can choose these isomorphisms in such a way that the associated scaling factor on invariant differentials is given by evaluating a power series $\alpha \in \powerseries{\ZZ}{X}$, independent of $\iota$ or~$q$.

\begin{lemma} \label{map-to-j}
There exists
\begin{enumerate}[(a)]
\item a power series $\alpha \in \powerseries{\ZZ}{X}$, and
\item for each embedding $\iota$ of~$\QQ$ and each $q \in \Delta_\iota'$, a morphism of $\CC_\iota$-algebraic groups $\psi_{\iota,q} \colon \aE_{\iota,q} \to \cE^\iota_{1/j^\iota(q)}$,
\end{enumerate}
such that, for every embedding $\iota$ of $\QQ$,
\begin{enumerate}[(i)]
\item $\alpha^\iota$ converges on $\Delta_\iota'$, and
\item for each $q \in \Delta_\iota'$, we have $\psi_{\iota,q}^*(\omega_{1/j^\iota(q)}) = \alpha^\iota(q) \tilde\omega_{\iota,q}$.
\end{enumerate} 
\end{lemma}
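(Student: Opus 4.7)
The plan is to construct $\psi_{\iota,q}$ as a Weierstrass change of variables governed by a single power series $u \in 1 + q\powerseries{\ZZ}{q}$, and to take $\alpha = u^{-1}$. First, I will observe that both families are in Weierstrass form with $a_1 = 1$, $a_2 = a_3 = 0$, so any isomorphism respecting these features is a Tate substitution $(x',y') \mapsto (u^2 x' + r,\, u^3 y' + u^2\sigma x' + t)$. Imposing $a_1 = 1$, $a_2 = 0$, $a_3 = 0$ on the target forces $\sigma = (u-1)/2$, $r = (u^2-1)/12$, $t = -(u^2-1)/24$, and the standard formula for the transformation of the invariant differential gives $\psi^*\omega = u^{-1}\tilde\omega$. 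Everything thus reduces to identifying $u$.

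Next I will pin down $u$ using the transformation law $u^4 c_4(\aE_q) = c_4(\cE_{1/j(q)})$. A direct computation yields $c_4(\aE_q) = E_4(q)$, while $c_4(\cE_s)$ is an explicit rational function of~$s$. Specialising at $s = 1/j(q)$ and invoking the classical identities $j = E_4^3/\Delta$ and $j - 1728 = E_6^2/\Delta$ will collapse this to $u^2 = E_4(q)/E_6(q)$, with sign fixed by the requirement $u(0) = 1$ (both families reduce to the same nodal cubic $y^2 + xy = x^3$ at $q = 0$).

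The main technical step is the integrality $u \in 1 + q\powerseries{\ZZ}{q}$. The key input is that the $q^n$-coefficient of $E_4 - E_6$ equals $240\sigma_3(n) + 504\sigma_5(n) = 24(10\sigma_3(n) + 21\sigma_5(n))$, so $f := E_4/E_6 - 1$ lies in $24 q\powerseries{\ZZ}{q}$. Expanding the binomial series $u = (1+f)^{1/2} = \sum_{n \geq 0} \binom{1/2}{n} f^n$, I will verify the identity
\[ \binom{1/2}{n} \cdot 24^n = (-1)^{n-1} \cdot 2^{n+1} \cdot 3^n \cdot C_{n-1} \quad (n \geq 1), \]
where $C_{n-1} = \binom{2n-2}{n-1}/n$ is a Catalan number, hence an integer. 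Writing $f = 24 q g$ with $g \in \powerseries{\ZZ}{q}$ then turns each summand into $\binom{1/2}{n} 24^n \cdot (qg)^n \in q^n \powerseries{\ZZ}{q}$, which sums to an element of $1 + q\powerseries{\ZZ}{q}$. This Catalan identity is the crux: naive binomial expansion would only give $u \in \powerseries{\QQ}{q}$. The subsidiary integrality of $r, \sigma, t$ follows from $u^2 - 1 = f \in 24 q\powerseries{\ZZ}{q}$.

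Finally, I will verify convergence of $\alpha^\iota = u^{-1,\iota}$ on $\Delta_\iota'$. For non-archimedean $\iota$, the integer coefficients yield $|\alpha_n|_\iota \leq 1$ and hence $R(\alpha^\iota) \geq 1 = $ radius of $\Delta_\iota'$. For archimedean $\iota$, the function $u^2 = E_4/E_6$ is holomorphic and non-vanishing wherever $E_6$ has no zero; the smallest zero of $E_6$ on the upper half-plane is at $\tau = i$, i.e.\ $q = e^{-2\pi}$, which lies on the boundary of $\Delta_\iota'$, and $E_4$ has no smaller zero (its first zero is at $\tau = e^{i\pi/3}$, i.e.\ $|q| = e^{-\pi\sqrt{3}} > e^{-2\pi}$). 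Hence the branch of $\sqrt{E_4/E_6}$ with value~$1$ at $0$ is holomorphic on $\Delta_\iota'$, and $\alpha^\iota$ inherits this holomorphicity.
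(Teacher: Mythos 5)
Your proposal is correct and takes essentially the same route as the paper's proof: an explicit Weierstrass substitution whose scaling factor satisfies $u^2 = E_4/E_6$, integrality of the power-series square root via Catalan numbers, and convergence from integrality at non-archimedean places and from the absence of zeros of $E_4$ and $E_6$ in $\Delta_\iota'$ at the archimedean place. The differences are cosmetic: the paper writes $E_6/E_4 = 1+4h$ and uses integrality of $(1+4X)^{1/2}$ instead of your $24$-divisibility identity, works with $\alpha$ directly rather than $u^{-1}$, and fixes the sign of $u$ at each $q$ by composing the isomorphism with $[-1]$.
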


\begin{remark}
The geometry which lies behind \cref{map-to-j} is that, for each~$\iota$, there is a morphism $\psi_\iota \colon \aE_\iota|_{\Delta_\iota'} \to \cE^{\iotaan}$ of complex or rigid analytic spaces, such that the following diagram commutes:
\[ \xymatrix{
    \aE_\iota|_{\Delta_\iota'}   \ar[r]^{\psi_\iota}  \ar[d]
  & \cE^\iotaan                  \ar[d]^{\pi^\iotaan}
\\  \Delta_\iota'                \ar[r]^{1/j^\iota}
  & S^\iotaan,
} \]
and such that, for every $\CC_\iota$-point $q \in \Delta_\iota'$, the restriction of~$\psi_\iota$ to~$\aE_{\iota,q}$ is equal to the morphism of complex or rigid analytic spaces underlying $\psi_{\iota,q}$.
This follows from equation~\eqref{eqn:isom-eq} in the proof below, and the fact that $u(q), r(q), s(q), t(q)$ in that equation are polynomials in $\alpha^\iota(q)^{-1}$, as can be seen using the first three lines of \cite[sec~III.1, Table~3.1]{SilAEC2ed}.
\end{remark}

\begin{proof}
The $q$-expansions of normalized Eisenstein series
$E_4(q) = 1 + 240s_3(q)$ and $E_6(q) = 1-504s_5(q)$ each have constant coefficient~$1$ and all their other coefficients are in $4\ZZ$.
Hence, the same is true of $E_6/E_4 \in \powerseries{\QQ}{q}$ and we can write
\[ E_6/E_4 = 1 + 4h, \]
where $h(q) \in \powerseries{\ZZ}{q}$ has no constant term.

The power series
\[ (1+4X)^{1/2} = 1 + \sum_{n=1}^\infty (-1)^{n+1} \frac{2}{n} \binom{2n-2}{n-1} X^n \]
has coefficients in $\ZZ$ because $\frac{1}{n} \binom{2n-2}{n-1}$ is a Catalan number.
Hence,
\[ \alpha(q) := (1+4X)^{1/2} \circ h = 1 + 372q + 127692q^2 + \dotsb \]
has coefficients in $\ZZ$.
Furthermore, $\alpha^2 = E_6/E_4$ in the ring $\powerseries{\QQ}{q}$.

Since $\alpha \in \ZZ[X]$, for every non-archimedean embedding $\iota$ of~$\QQ$, $\alpha^\iota$ converges on~$\Delta_\iota$.
For the archimedean embedding $\iota$ of~$\QQ$, recall that the modular forms $E_4^\iota(e^{2\pi i\tau})$, $E_6^\iota(e^{2\pi i\tau})$ are holomorphic on $\cH$ and that they have no zeros on $\cH$ with imaginary part greater than~$1$.
Consequently, $E_6^\iota(e^{2\pi i\tau})/E_4^\iota(e^{2\pi i\tau})$ is holomorphic and non-zero on $\cS = \{ \tau \in \cH : \Im(\tau) > 1 \}$, so $E_6^\iota/E_4^\iota$ is holomorphic and non-zero on the image of $\cS$ under $e^{2\pi i-}$, namely $\Delta'_\iota$.
It follows that $E_6^\iota/E_4^\iota$ has a holomorphic square root on this disc.
If we choose the square root of $E_6^\iota/E_4^\iota$ with the correct sign, its Taylor series will be $\alpha^\iota$, so $\alpha^\iota$ converges on $\Delta_\iota'$.

\medskip

For each embedding $\iota$ of~$\QQ$ and each $q \in \Delta_\iota'^*$, since $\aE_{\iota,q}$ and $\cE^\iota_{1/j^\iota(q)}$ have the same $j$-invariant, there is an isomorphism of elliptic curves $\psi_{\iota,q} \colon \aE_{\iota,q} \to \cE^\iota_{1/j^\iota(q)}$.
By \cite[Prop.~III.3.1(b)]{SilAEC2ed}, this isomorphism must have the form
\begin{equation} \label{eqn:isom-eq}
\psi_{\iota,q}(\tilde{x}, \tilde{y}) = (u(q)^2\tilde{x} + r(q), \, u(q)^3\tilde{y} + u(q)^2s(q)\tilde{x} + t(q))
\end{equation}
for some $u(q) \in \CC_\iota^\times$ and $r(q), s(q), t(q) \in \CC_\iota$.

The short Weierstrass coefficients for $\cE^\iota_{1/j^\iota(q)}$ and $\aE_{\iota,q}$ are
\begin{align*}
    c_4(1/j^\iota(q))  & = \frac{j^\iota(q)}{j^\iota(q)-1728},
  & \tilde{c}_4(q)     & = E_4^\iota(q),
\\  c_6(1/j^\iota(q))  & = -\frac{j^\iota(q)}{j^\iota(q)-1728},
  & \tilde{c}_6(q)     & = -E_6^\iota(q).
\end{align*}
From \cite[sec.~III.1, Table~3.1]{SilAEC2ed}, these short Weierstrass coefficients are related by
\[ u(q)^4\tilde{c}_4(q) = c_4(1/j^\iota(q)), \quad u(q)^6\tilde{c}_6(q) = c_6(1/j^\iota(q)). \]
Therefore,
\[ u(q)^2 = \frac{\tilde{c}_4(q) c_6(1/j^\iota(q))} {\tilde{c}_6(q) c_4(1/j^\iota(q))} = \frac{E_4^\iota(q)}{E_6^\iota(q)} = \alpha^\iota(q)^{-2}. \]
(Note that, as discussed above, $j^\iota$ does not take the values $0$ or $1728$ on $\Delta_\iota'$, so $c_4(1/j^\iota(q))$ and $c_6(1/j^\iota(q))$ are finite and non-zero on $\Delta_\iota'$ for any embedding $\iota$.
Since $u$ is non-zero on $\Delta_\iota'$, it follows that $\tilde{c}_4(q)$ and $\tilde{c}_6(q)$ are non-zero on $\Delta_\iota'$.)

Thus $u(q) = \pm\alpha^\iota(q)^{-1}$.
If $u(q) = -\alpha^\iota(q)^{-1}$, then we can replace the isomorphism $\psi_q$ by $[-1] \circ \psi_q$.
This replacement multiplies $u$ by $-1$.
Thus, we obtain $u(q) = \alpha^\iota(q)^{-1}$ for all $q \in \Delta'^*_\iota$.
Again from \cite[sec.~III.1, Table~3.1]{SilAEC2ed}, we read off that
\[ u(q)^{-1} \tilde\omega_{\iota,q} = \psi_{\iota,q}^*(\omega^\iota_{1/j^\iota(q)}), \]
proving (ii) for all $q \neq 0$.

For $q=0$, define $\psi_{\iota,0}$ to be the identity morphism $\GG_{m,\CC_\iota} \to \GG_{m,\CC_\iota}$.  Note that this is the same as the morphism that would be given by the formula \eqref{eqn:isom-eq} at $q=0$, after calculating $r(q)$, $s(q)$ and $t(q)$ as polynomials in $u(q)=\alpha^\iota(q)^{-1}$ using \cite[sec.~III.1, Table~3.1]{SilAEC2ed}.
Since $\alpha^\iota(0)=1$, (ii) is satisfied for $q=0$.
\end{proof}

\subsection{Periods} \label{subsec:periods-defs}

In order to define a power series~$F$ which will be play a central role throughout the paper, we modify $\alpha$ from \cref{map-to-j} so that its $\iota$-adic evaluations become functions of $s=1/j^\iota(q) \in S^\iota(\CC_\iota)$ instead of functions of $q \in \Delta'_\iota$. To accomplish this, note first that $1/j$ has no constant term and its coefficient of degree $1$ is a unit in $\ZZ$.
Hence $1/j$ has a compositional inverse in $\powerseries{\ZZ}{X}$, which we denote by
\[ \theta(X) = X + 744X^2 + 750420X^3 + \dotsb \in \powerseries{\ZZ}{X}. \]

Define
\[ F = \alpha \circ \theta \in \powerseries{\ZZ}{X}. \]

Since $\theta, \alpha \in \powerseries{\ZZ}{X}$, by \cref{padic-composition}, for all non-archimedean embeddings $\iota$ of $\QQ$, we have $R(F^\iota) \geq 1$ and for all $s \in \Delta_\iota$,
\begin{equation} \label{eqn:F-composition}
F^\iota(s) = \alpha^\iota(\theta^\iota(s)).
\end{equation}
Note that $\Delta_\iota \subset S^\iota(\CC_\iota)$ since $\abs{1/1728}_\iota \geq 1$ for all non-archimedean embeddings~$\iota$.

For the archimedean embedding $\iota$ of~$\QQ$, let $\Delta_S$ denote an open disc in $\CC$ centred at~$0$ with the following properties:
\begin{enumerate}
\item the radius of $\Delta_S$ is at most $1/1728$ (so $\Delta_S \subset S^\iota(\CC)$);
\item $\theta^\iota$ converges on $\Delta_S$;
\item $\theta^\iota(\Delta_S) \subset D(0, e^{-2\pi }, \CC)$.
\end{enumerate}
Then the disc of convergence of $F^\iota$ contains $\Delta_S$ and \eqref{eqn:F-composition} holds when $\iota$ is the archimedean embedding for all $s \in \Delta_S$.

\medskip

We can interpret the evaluation of $F$ at the archimedean embedding~$\iota \colon \QQ \to \CC$ as giving periods of elliptic curves over~$\CC$.
Indeed, let $\tilde\gamma$ be the generator of $H_1(\CC^\times, \bZ) \cong \bZ$ such that $\int_{\tilde\gamma} du/u = 2\pi i$ (this fixes an orientation of $\bC$).
Then, for $s \in \Delta_S$ and $q = \theta^\iota(s) \in \Delta'_\iota$,
\[ \gamma_s := \psi_{\iota,q*} \phi_{\iota,q*}(\tilde\gamma) \in H_1(\cE^\iota_s(\CC), \ZZ) \]
are the values of a section $\gamma \in (R_1 \pi^\iota_* \QQ)(\Delta_S)$.
In particular, for $s \in \Delta_S^*$, $\gamma_s$ is locally invariant (that is, invariant under the monodromy action of $\pi_1(\Delta_S^*, s)$).
Using \cref{map-to-j}(ii), \eqref{eqn:phi-omega} and \eqref{eqn:F-composition}, we have
\begin{equation} \label{eqn:arch-F-integral}
     \frac{1}{2\pi i} \int_{\gamma_s} \omega^\iota_s
   = \frac{1}{2\pi i} \int_{\tilde\gamma} \phi_{\iota,q}^*\psi_{\iota,q}^*(\omega^\iota_s)
   = \frac{1}{2\pi i} \int_{\tilde\gamma} \alpha^\iota(q) \frac{du}{u} = \alpha^\iota(q) = F^\iota(s).
\end{equation}

\medskip

Now we define three additional functions which, together with $F$, make up the full period matrix of the family $\cE$ under the archimedean embedding~$\iota \colon \QQ \to \CC$.
The first of these, $G$, is the archimedean evaluation of a power series with coefficients in $\QQ$; this matters because we need $G$ to be a G-function, but we will not make use of non-archimedean evaluations of~$G$.
The other two functions, $F^*$ and $G^*$, are defined only as holomorphic functions on a subset of $\Delta_S^*$ and are not evaluations of power series or even Laurent series at all (they are not meromorphic at~$0$).

Let $\eta \in H^1_{DR}(\cE|_{S^*}/S^*)$ denote the de Rham cohomology class which, on each fibre $\cE_s$, is represented by the differential form of the second kind
\[ (x+\tfrac{1}{12}) \, \frac{dx}{2y+x}. \]
For each $s \in S^*(K)$ (where $K$ is any field of characteristic~$0$), the de Rham classes $[\omega_s]$ and $\eta_s$ form a symplectic basis for $H^1_{DR}(\cE_s/K)$ with respect to the residue pairing.
Define a holomorphic function $G^\iota \colon \Delta_S \to \bC$ by
\begin{equation} \label{eqn:arch-G-integral}
G^\iota(s) = \frac{1}{2\pi i} \int_{\gamma_s} \eta^\iota_s.
\end{equation}

\begin{lemma}
$G^\iota$ is the complex evaluation of a power series $G \in \powerseries{\QQ}{X}$.
\end{lemma}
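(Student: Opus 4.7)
The plan is to express $G^\iota$ in terms of $F^\iota$ using the Gauss--Manin connection on $H^1_{DR}(\cE|_{S^*}/S^*)$, which is defined over $\QQ$ since the family $\cE/S$ and the de Rham classes $[\omega]$ and $\eta$ are. Concretely, since $\{[\omega], \eta\}$ is an $\cO_{S^*}$-module basis of $H^1_{DR}(\cE|_{S^*}/S^*)$, there exist rational functions $a, b \in \QQ(s)$ with
\[ \nabla_{\partial/\partial s}[\omega] = a(s)\,[\omega] + b(s)\,\eta. \]

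Because the cycles $\gamma_s$ form a locally invariant section of $R_1\pi^\iota_*\bZ$ over $\Delta_S^*$, they are horizontal for the dual Gauss--Manin connection, so one may differentiate under the integral in \eqref{eqn:arch-F-integral} and \eqref{eqn:arch-G-integral} to obtain
\[ (F^\iota)'(s) = a^\iota(s)\,F^\iota(s) + b^\iota(s)\,G^\iota(s), \]
valid as an identity of holomorphic functions on $\Delta_S$ away from the poles of $a$ and $b$.

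The main substantive step is to check that $b \not\equiv 0$. The family $\cE\to S^*$ is non-isotrivial and has multiplicative degeneration at $s=0$, so its local monodromy there is a non-trivial unipotent transvection; hence the rank-$2$ local system $R_1\pi^\iota_*\bQ$ is irreducible, so $[\omega]$ cannot generate a rank-$1$ sub-connection of $H^1_{DR}$, and equivalently $b \neq 0$. (The same conclusion can be reached by a direct computation of the Gauss--Manin matrix in the Weierstrass model; this is the one place where any real verification, as opposed to formalism, is needed.)

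Having established $b \not\equiv 0$, the relation above can be solved to give
\[ G^\iota = \frac{(F^\iota)' - a^\iota F^\iota}{b^\iota}. \]
The right-hand side is the complex evaluation of the formal Laurent series $G := (F' - aF)/b$, which lies in $\QQ((X))$ because $F \in \powerseries{\bZ}{X}$ and $a, b \in \QQ(s)$ have Laurent expansions in $\QQ((X))$ at $0$. Since $G^\iota$ is holomorphic on all of $\Delta_S$, including at~$0$, this Laurent series has no polar part, so $G \in \powerseries{\QQ}{X}$ as required.
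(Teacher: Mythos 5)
Your proof is correct in substance and is essentially the paper's argument run in the opposite direction. The paper expands $\eta$ in the $\QQ(S)$-basis $\{\omega, \nabla_{d/ds}\omega\}$, giving immediately $G = aF + b\,\frac{dF}{dX} \in \powerseries{\QQ}{X}$ with no division; you instead expand $\nabla_{\partial/\partial s}[\omega]$ in the basis $\{[\omega],\eta\}$ and then solve for $G^\iota$, which forces you to (a) verify $b \not\equiv 0$ and (b) argue separately that the resulting Laurent series has no polar part. Note that your $b \not\equiv 0$ is exactly equivalent to the paper's (unproved) assertion that $\omega$ and $\nabla_{d/ds}\omega$ are linearly independent over $\QQ(S)$, so neither route escapes this non-degeneracy input; the paper's direction simply packages it so that no inversion or pole-killing step is needed afterwards. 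Your handling of the polar part (holomorphy of $G^\iota$ at $0$ together with rationality of the formal quotient $(F'-aF)/b$ in $\QQ((X))$) is fine.

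The one step I would push back on is the justification of $b \not\equiv 0$: ``the local monodromy at $s=0$ is a non-trivial unipotent transvection; hence the rank-$2$ local system is irreducible'' is not a valid inference --- a transvection by itself fixes a line, so nontrivial unipotent local monodromy is perfectly compatible with a reducible (even unipotent triangular) global monodromy group. The conclusion you want is nonetheless true: non-isotriviality gives infinite monodromy, and then Andr\'e's normal monodromy theorem \cite{And92} (or the classical computation of the Picard--Fuchs equation, as you note parenthetically) shows the connected monodromy group is all of $\gSL_2$, hence the local system is irreducible and no rank-$1$ sub-connection containing $[\omega]$ can exist; this is precisely the argument the paper itself uses later in \cref{CZar}. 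With that justification repaired (or replaced by the direct Gauss--Manin computation), your proof goes through.
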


\begin{proof}
Let $\nabla$ denote the Gauss--Manin connection on $H^1_{DR}(\cE|_{S^*}/S^*)$.
Then $[\omega]$ and $\nabla_{d/ds}([\omega])$ form a $\QQ(S)$-basis for $H^1_{DR}(\cE_{\QQ(S)}/\QQ(S))$.
So $\eta = a[\omega] + b\nabla_{d/ds}([\omega])$ for some $a,b \in \QQ(S)$.
It follows that $G^\iota$ is the $\iota$-adic evaluation of
\[ G = aF + b\frac{dF}{dX} \in \powerseries{\QQ}{X}.
\qedhere \]
\end{proof}

\medskip

Let $\Delta_S^\dag$ be a simply connected open subset of $\Delta_S \setminus \{0\}$ which contains $S^*(\ov\QQ) \cap \Delta_S$.  For example, we can choose $\Delta_S^\dag$ to be the complement in $\Delta_S \setminus \{0\}$ of a ray from $0$ which misses all $\ov\QQ$-points.

Since $\Delta_S^\dag$ is simply connected, the local system $R_1 \pi_*^\iota \bQ|_{\Delta_S^\dag}$ is trivial.
This local system has rank~$2$, so we can choose a section $\delta$ of $R_1 \pi_*^\iota \bQ|_{\Delta_S^\dag}$ such that $\gamma$ and $\delta$ form an oriented basis for $(R_1 \pi_*^\iota \bQ)(\Delta_S^\dag)$.  (``Oriented'' means that $\gamma \cdot \delta = +1$, where $\cdot$ denotes the intersection pairing on the fibres.)
Define
\[ F^*(s) = \frac{1}{2\pi i} \int_{\delta_s} \omega^\iota_s, \quad G^*(s) = \frac{1}{2\pi i} \int_{\delta_s} \eta^\iota_s. \]
These are holomorphic functions on $\Delta_S^\dag$, but they do not extend to holomorphic functions on $\Delta_S^*$ because $\delta$ has non-trivial monodromy around $0$.

By the Legendre period relation for elliptic curves, for every $s \in \Delta_S^\dag$,
\begin{equation} \label{eqn:legendre-relation}
\det \fullmatrix{F^\iota(s)}{F^*(s)}{G^\iota(s)}{G^*(s)}
= \frac{1}{(2\pi i)^2} \det \fullmatrix{\int_{\gamma_s} \omega^\iota_s}{\int_{\delta_s} \omega^\iota_s}{\int_{\gamma_s} \eta^\iota_s}{\int_{\delta_s} \eta^\iota_s}  = \frac{1}{2\pi i}.
\end{equation}

\begin{remark}
This will not be used in this paper, but we can also interpret the $p$-adic evaluation of $F$ in terms of the $p$-adic period pairing
\[ \langle -, - \rangle \colon T_p(G) \times H^1_{DR}(G/\CC_p) \to B_{DR}, \]
defined for $G$ a commutative algebraic group over $\CC_p$.
Indeed, let $\iota$ denote the embedding $\QQ \to \CC_p$.
Choose a generator $\tilde\gamma_\iota$ for $T_p(\bG_m) = \bZ_p(1)$ (a ``$p$-adic orientation'').
Then the ``$p$-adic $2\pi i$''
\[ t_p := \langle \tilde\gamma_\iota, du/u \rangle \]
is a uniformiser of $B_{DR}^+$, and $\psi_{\iota,q*} \phi_{\iota,q*}(\tilde\gamma_\iota)$ is a locally invariant $p$-adic cycle on $\cE^\iota(\CC_\iota)|_{\Delta_\iota}$.
The same calculation as~\eqref{eqn:arch-F-integral} shows that, for all $s \in \Delta_\iota$ with $q = \theta^\iota(s)$,
\[ \frac{1}{t_p} \bigl\langle \psi_{\iota,q*} \phi_{\iota,q*}(\tilde\gamma_\iota), \omega^\iota_s \bigr\rangle
= \frac{1}{t_p} \bigl\langle \tilde\gamma_\iota, \psi_{\iota,q}^* \phi_{\iota,q}^*(\omega^\iota_s) \bigr\rangle
= \frac{1}{t_p} \bigl\langle \tilde\gamma_\iota, F^\iota(s) du/u \bigr\rangle
= F^\iota(s). \]
\end{remark}

\section{Period relations for isogenous points} \label{sec:period-relations}

In this section, we show how isogenies between elliptic curves over a number field $\hat K$ lead to polynomial relations between corresponding evaluations of $F$ and~$G$.
We construct these relations separately for archimedean and non-archimedean embeddings of~$\hat K$.

\begin{definition}
  If $N$ is a non-zero integer, we write $d(N)$ for the number of positive divisors of~$N$.  
\end{definition}

\begin{theorem} \label{global-relation}
Let $\hat K$ be a number field.
Let $M$, $N$ be positive integers.
Let $s_1, s_2, s_3, s_4 \in S^*(\hat K)$ be points such that:
\begin{enumerate}[(i)]
\item there is an isogeny $\cE_{s_1} \to \cE_{s_2}$ of degree~$M$ defined over $\hat K$;
\item there is an isogeny $\cE_{s_3} \to \cE_{s_4}$ of degree~$N$ defined over $\hat K$.
\end{enumerate}
Then:
\begin{enumerate}
\item There exists a non-zero homogeneous polynomial $P_\infty \in \hat K[Y_1, Z_1, \dotsc, Y_4, Z_4]$ of degree at most $2[\hat K:\QQ]$ such that, for every archimedean embedding $\iota$ of $\hat K$ satisfying $s_i^\iota \in \Delta_S$ for all~$i=1,2,3,4$, we have
\[ P_\infty^\iota(F^\iota(s_1^\iota), G^\iota(s_1^\iota), \dotsc, F^\iota(s_4^\iota), G^\iota(s_4^\iota)) = 0. \]
Furthermore, $P_\infty$ is not in the ideal $\langle Y_1-Y_3, Z_1-Z_3 \rangle$.
\item There exists a non-zero homogeneous polynomial $P_{fin} \in \hat K[Y_1, Y_2]$ of degree at most $2d(M)$ such that, for every non-archimedean embedding $\iota$ of $\hat K$ satisfying $\abs{s_i^\iota} < 1$ for~$i=1,2$, we have
\[ P_{fin}^\iota(F^\iota(s_1^\iota),F^\iota(s_2^\iota)) = 0. \]
\end{enumerate}
\end{theorem}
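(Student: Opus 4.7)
\emph{Proof plan.}

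\textbf{Non-archimedean part (construction of $P_{fin}$).} The key idea is to exploit the Tate uniformisations $\pi_i := \psi_{\iota,q_i}\circ\phi_{\iota,q_i}\colon \CC_\iota^\times \to \cE_{s_i}^\iota(\CC_\iota)$ with $q_i = \theta^\iota(s_i^\iota) \in \Delta_\iota^*$, which by \cref{map-to-j}(ii) and~\eqref{eqn:phi-omega} satisfy $\pi_i^*(\omega_{s_i}^\iota) = F^\iota(s_i^\iota)\cdot du/u$. The algebraic isogeny $\phi\colon\cE_{s_1}\to\cE_{s_2}$ lifts, via the covering theory of rigid analytic groups, to a rigid analytic homomorphism $\tilde\phi\colon \CC_\iota^\times \to \CC_\iota^\times$ with $\pi_2\circ\tilde\phi = \phi^\iota\circ\pi_1$. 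Rigid analytic endomorphisms of $\GG_m$ coincide with the algebraic ones, so $\tilde\phi(u)=u^n$ for some $n\in\ZZ\setminus\{0\}$, and an analysis of the kernel forces $|n|\mid M$. Writing $\phi^*\omega_{s_2}=\lambda\omega_{s_1}$ with $\lambda\in\hat K^\times$, the identity $\tilde\phi^*(du/u) = n\cdot du/u$ together with pullback along $\pi_2\tilde\phi=\phi\pi_1$ yields $nF^\iota(s_2^\iota)=\iota(\lambda)F^\iota(s_1^\iota)$. Taking
\[ P_{fin}(Y_1,Y_2) := \prod_{\substack{n\in\ZZ,\ n\neq 0 \\ |n|\mid M}} (nY_2-\lambda Y_1) \in \hat K[Y_1,Y_2], \]
gives a non-zero homogeneous polynomial of degree $2d(M)$; at each admissible $\iota$, vanishing is witnessed by the factor for the specific $n$ arising there.

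\textbf{Archimedean part (construction of $P_\infty$).} For each archimedean $\iota$ with all $s_i^\iota\in\Delta_S$, let $\Pi_i^\iota = \bigl(\begin{smallmatrix}F(s_i)&F^*(s_i)\\ G(s_i)&G^*(s_i)\end{smallmatrix}\bigr)^\iota$, for which \eqref{eqn:legendre-relation} gives $\det\Pi_i^\iota = 1/(2\pi i)$. The isogeny $\phi_1$ yields a matrix identity $A_1\Pi_1^\iota = \Pi_2^\iota N_1(\iota)^T$, where $A_1\in \rM_2(\hat K)$ is the $\iota$-independent matrix of $\phi_1^*$ on $H^1_{DR}$ (with $\det A_1 = M$) and $N_1(\iota)\in \rM_2(\ZZ)$ is the matrix of $\phi_{1*}$ on $H_1$ (with $\det N_1(\iota) = M$). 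Combining two scalar equations of this identity with the Legendre relation for $\Pi_2^\iota$ to eliminate the four starred periods at $s_1$, $s_2$, we derive
\[ 2\pi i\cdot Q_1^\iota\bigl(F^\iota(s_1^\iota),G^\iota(s_1^\iota),F^\iota(s_2^\iota),G^\iota(s_2^\iota)\bigr) = m_1(\iota), \]
where $Q_1\in \hat K[Y_1,Z_1,Y_2,Z_2]$ is a \emph{fixed} (i.e., $\iota$-independent) homogeneous quadratic form whose coefficients are entries of $A_1$, and $m_1(\iota)\in\ZZ$ is one entry of $N_1(\iota)$. An analogous derivation for $\phi_2$ produces $Q_2\in \hat K[Y_3,Z_3,Y_4,Z_4]$ and $m_2(\iota)\in\ZZ$ with $2\pi i\cdot Q_2^\iota = m_2(\iota)$. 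Forming the ratio eliminates $2\pi i$: $R_\iota := m_2(\iota)Q_1 - m_1(\iota)Q_2 \in \hat K[Y_1,\dotsc,Z_4]$ is a homogeneous quadratic form vanishing at the $\iota$-evaluated periods.

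Setting $P_\infty := \prod_\iota R_\iota$, over the at most $[\hat K:\QQ]$ admissible archimedean embeddings, and expanding gives $P_\infty = \sum_k C_k Q_1^k Q_2^{r-k}$ with $C_k\in\ZZ$; hence $P_\infty \in \hat K[Y_1,\dotsc,Z_4]$ directly, with no Galois-closure needed, and is homogeneous of degree $\leq 2[\hat K:\QQ]$. Since $\langle Y_1-Y_3, Z_1-Z_3\rangle$ is a prime ideal, it suffices to show each factor $R_\iota$ lies outside it; modulo this ideal, $Q_1$ still depends on $(Y_2,Z_2)$ and $Q_2$ on the disjoint set $(Y_4,Z_4)$, so $R_\iota$ is non-zero whenever $(m_1(\iota),m_2(\iota))\neq(0,0)$. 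Degenerate embeddings with $m_1(\iota)=m_2(\iota)=0$ (where both isogenies preserve the locally invariant cycle) are accommodated by including $Q_1$ or $Q_2$ itself as an additional factor of $P_\infty$.

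\textbf{Main obstacle.} The principal technical challenge is the clean algebraic elimination in the archimedean case: one must isolate the fixed quadratic forms $Q_1,Q_2$ with $\hat K$-coefficients so that all $\iota$-dependence is confined to a single integer $m_i(\iota)$. This integrality is precisely what makes the product $\prod_\iota R_\iota$ have $\hat K$-coefficients and delivers the sharp degree bound $2[\hat K:\QQ]$ without any Galois-closure penalty. Secondary care is needed to treat the degenerate embeddings and to verify the non-membership of $P_\infty$ in $\langle Y_1-Y_3,Z_1-Z_3\rangle$.
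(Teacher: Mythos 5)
Your proposal is correct and follows essentially the same route as the paper: at each archimedean embedding you derive the quadratic relation $2\pi i\,Q_j^\iota = m_j(\iota)$ from the period-matrix identity plus the Legendre relation, eliminate $2\pi i$ using the second isogeny (falling back to $Q_1$ or $Q_2$ alone when the integer entries vanish, exactly the paper's $r=0$ case), and take the product over archimedean embeddings; non-archimedeanly you lift the isogeny through the Tate uniformisation to $u\mapsto u^n$ with $n\mid M$, obtaining the linear relations $\lambda F^\iota(s_1^\iota)=nF^\iota(s_2^\iota)$ and multiplying over the $2d(M)$ signed divisors. The primality argument for $\langle Y_1-Y_3,Z_1-Z_3\rangle$ and the degree bookkeeping also match the paper's proof.
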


\begin{proof}
(1) Let $P_\infty$ be the product of the polynomials $P_\iota$ given by \cref{arch-relation}, as $\iota$ runs through the archimedean embeddings $\iota$ of $\hat K$.

(2) Let $P_{fin}$ be the product of the polynomials $P_m$ from \cref{non-arch-relation} (applied to $s_1$ and $s_2$), as $m$ runs through the divisors (positive and negative) of~$M$.
\end{proof}

\begin{remark}
For each archimedean embedding $\iota$, $s^\iota \in \Delta_S$ implies that $\abs{s^\iota} < \min(R(F^\iota), R(G^\iota))$.  Hence the evaluations in \cref{global-relation}(1) make sense.

Note that the relations at non-archimedean embeddings involve only $F$, not $G$.
This enables us to construct the relation without knowing a geometric interpretation of the $p$-adic evaluations of~$G$.
It also saves us the effort of estimating $R(G^\iota)$ which would otherwise be needed to control the set on which \cref{global-relation}(2) makes sense.

The non-archimedean relations involve only $s_1, s_2$, not $s_3, s_4$.
This is not important, except that it avoids the need to assert that $P_{fin} \not\in \langle Y_1-Y_3, Z_1-Z_3 \rangle$.

We may need to apply \cref{global-relation} with $s_1=s_3$.  
The assertion that $P_\infty \not\in \langle Y_1-Y_3, Z_1-Z_3 \rangle$ in \cref{global-relation} is needed to ensure that $P_\infty$ does not collapse to~$0$ upon making the substitution $Y_1=Y_3$, $Z_1=Z_3$.
\end{remark}

\begin{remark}
The polynomial $P_\infty$ is obtained by finding a relation for each archimedean embedding of $\hat K$ (\cref{arch-relation}), then multiplying these together.  There are $[\hat K:\QQ]$ archimedean embeddings, leading to a bound for $\deg(P_\infty)$.

The polynomial $P_{fin}$ is similarly constructed as a product, where one of the factors gives a relation between evaluations of $F$ at each non-archimedean embedding of $\hat K$ (\cref{non-arch-relation}).
However, there are infinitely many non-archimedean embeddings, so it is important that the relations at non-archimedean embeddings are not independent of each other: instead \cref{non-arch-relation} gives a finite collection of polynomials (of controlled size) whose members give relations between evaluations of~$F$ at every non-archimedean embedding where $s_1^\iota$ and $s_2^\iota$ are small enough.
\end{remark}

\subsection{Complex period relations}\label{sec:complex-rel}

To obtain a period relation at an archimedean embedding of $\hat K$, we require two isogenies $\cE_{s_1} \to \cE_{s_2}$ and $\cE_{s_3} \to \cE_{s_4}$.
A single isogeny gives a relation whose coefficients involve $2\pi i$, not just algebraic numbers.  A second isogeny gives a second such relation, allowing us to eliminate~$2\pi i$.

\begin{proposition} \label{arch-relation}
Let $\hat K$ be a number field.
Let $M$, $N$ be positive integers.
Let $s_1, s_2, s_3, s_4 \in S^*(\hat K)$ with the following properties:
\begin{enumerate}[(i)]
\item there is an isogeny $\cE_{s_1} \to \cE_{s_2}$ of degree~$M$ defined over $\hat K$;
\item there is an isogeny $\cE_{s_3} \to \cE_{s_4}$ of degree~$N$ defined over $\hat K$.
\end{enumerate}
Then, for each archimedean embedding $\iota$ of $\hat K$ satisfying $s_i^\iota \in \Delta_S$ for all~$i=1,2,3,4$, there exists a non-zero polynomial $P_\iota \in \hat K[Y_1, Z_1, \dotsc, Y_4, Z_4]$, homogeneous of degree~$2$, such that
\[ P_\iota^\iota(F^\iota(s_1^\iota), G^\iota(s_1^\iota), \dotsc, F^\iota(s_4^\iota), G^\iota(s_4^\iota)) = 0. \]
Furthermore, $P_\iota$ is not in the ideal $\langle Y_1-Y_3, Z_1-Z_3 \rangle$.
\end{proposition}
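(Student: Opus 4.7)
The plan is to convert each of the two isogenies into a quadratic relation among the periods that involves the factor $1/(2\pi i)$, and then combine the two relations so as to eliminate this transcendental factor. Throughout, write $v_i = (F^\iota(s_i^\iota), G^\iota(s_i^\iota))^T$ and $v_i^* = (F^*(s_i^\iota), G^*(s_i^\iota))^T$; the hypothesis $s_i^\iota \in \Delta_S$ together with $s_i^\iota \in \Qbar \setminus \{0\}$ puts $s_i^\iota \in \Delta_S^\dag$, so all four quantities are defined, and the Legendre relation \eqref{eqn:legendre-relation} reads $\det(v_i, v_i^*) = 1/(2\pi i)$.

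For the isogeny $\phi_1 \colon \cE_{s_1} \to \cE_{s_2}$ defined over $\hat K$, pullback on de Rham cohomology preserves the Hodge filtration, so there exist $\lambda_1, \mu_1, \nu_1 \in \hat K$ with $\phi_1^*\omega_{s_2} = \lambda_1 \omega_{s_1}$ and $\phi_1^*\eta_{s_2} = \mu_1 \omega_{s_1} + \nu_1 \eta_{s_1}$; compatibility with the residue pairing (which $\phi_1^*$ scales by $\deg \phi_1 = M$) gives $\lambda_1 \nu_1 = M$, hence $\lambda_1 \neq 0$. Write also $\phi_{1*}\gamma_{s_1} = e_1 \gamma_{s_2} + f_1 \delta_{s_2}$ with $e_1, f_1 \in \ZZ$, and set $A_1 = \fullsmallmatrix{\lambda_1}{0}{\mu_1}{\nu_1}$. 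The adjunction $\int_\gamma \phi_1^*\alpha = \int_{\phi_{1*}\gamma}\alpha$ unpacks into a matrix identity whose first column is $A_1 v_1 = e_1 v_2 + f_1 v_2^*$. Substituting the resulting expression for $v_2^*$ into $\det(v_2, v_2^*) = 1/(2\pi i)$ yields $\det(v_2, A_1 v_1) = f_1/(2\pi i)$, which, expanded as a $2 \times 2$ determinant, is
\[ \mu_1 F^\iota(s_1^\iota) F^\iota(s_2^\iota) + \nu_1 G^\iota(s_1^\iota) F^\iota(s_2^\iota) - \lambda_1 F^\iota(s_1^\iota) G^\iota(s_2^\iota) = f_1/(2\pi i); \]
this identity continues to hold when $f_1 = 0$, since in that case $A_1 v_1 = e_1 v_2$ and both sides vanish. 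An analogous identity holds for $\phi_2$ with constants $\lambda_2, \mu_2, \nu_2 \in \hat K$ and $f_2 \in \ZZ$, with $s_3, s_4$ in place of $s_1, s_2$.

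Multiplying the first identity by $f_2$ and the second by $f_1$, then subtracting, cancels $1/(2\pi i)$ and produces
\[ P_\iota = f_2(\mu_1 Y_1 Y_2 + \nu_1 Z_1 Y_2 - \lambda_1 Y_1 Z_2) - f_1(\mu_2 Y_3 Y_4 + \nu_2 Z_3 Y_4 - \lambda_2 Y_3 Z_4), \]
a homogeneous polynomial of degree $2$ in $\hat K[Y_1, Z_1, \dotsc, Y_4, Z_4]$ that vanishes at $(F^\iota(s_i^\iota), G^\iota(s_i^\iota))_{i=1}^4$ by construction. If at least one of $f_1, f_2$ is non-zero, then the six constituent monomials are pairwise distinct and at least one of the coefficients $-f_2\lambda_1$, $f_1\lambda_2$ is non-zero (using $\lambda_1, \lambda_2 \neq 0$), so $P_\iota \neq 0$; applying the substitution $Y_3 \mapsto Y_1$, $Z_3 \mapsto Z_1$ keeps the six monomials pairwise distinct, and the same coefficient argument shows $P_\iota \notin \langle Y_1 - Y_3, Z_1 - Z_3 \rangle$. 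In the remaining degenerate case $f_1 = f_2 = 0$, the relation reduces to $\lambda_1 F^\iota(s_1^\iota) = e_1 F^\iota(s_2^\iota)$ with $e_1 \neq 0$ (because $\det B_1 = e_1 h_1 = \pm M$), and we instead take $P_\iota = Y_1(\lambda_1 Y_1 - e_1 Y_2)$, which is a non-zero homogeneous polynomial of degree $2$, vanishes at the periods, and lies outside $\langle Y_1 - Y_3, Z_1 - Z_3 \rangle$ because it involves neither $Y_3$ nor $Z_3$. The main obstacle will be this case split together with the verification of non-triviality; the rest is routine bookkeeping with $2\times 2$ matrices and the Legendre relation.
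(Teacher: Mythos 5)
Your proof is correct and follows essentially the same route as the paper: the same isogeny data $(\lambda_1,\mu_1,\nu_1;e_1,f_1)$, the same use of the Legendre relation to produce a quadratic identity with right-hand side $f_1/(2\pi i)$, the same elimination of $2\pi i$ by combining the two isogenies, and the same distinct-monomial argument for non-vanishing and for $P_\iota \notin \langle Y_1-Y_3, Z_1-Z_3\rangle$. The only differences are cosmetic (your uniform combination $f_2R_1 - f_1R_2$ plus a separate $f_1=f_2=0$ case, versus the paper's split into $r=0$ and $r\neq 0$; also your $B_1,h_1$ are never defined, though $e_1\neq 0$ is not actually needed for your chosen degenerate-case polynomial).
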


\begin{proof}
Let $\iota$ be an embedding $\hat K \to \CC$ satisfying $s_i^\iota \in \Delta_S$ for all~$i=1,2,3,4$.
To reduce notation, we omit mention of~$\iota$ for the rest of this proof.

We consider first the pair $s_1$, $s_2$, obtaining an inhomogeneous relation of degree at most $2$ between the $\iota$-adic evaluations of $F(s_1), G(s_1), F(s_2), G(s_2)$ whose value is a rational multiple of $1/2\pi i$.

Let $f \colon \cE_{s_1} \to \cE_{s_2}$ be an isogeny of degree~$M$.
Note that $f^*(\omega_{s_2}) \in \Omega^1(\cE_{s_1}/\hat K)$, which is a 1-dimensional $\hat K$-vector space, so there is an element $a \in \hat K$ such that
\begin{equation} \label{eqn:a-omega}
f^*(\omega_{s_2}) = a\omega_{s_1}.
\end{equation}
There are also $b, d \in \hat K$ such that
\[ f^*(\eta_{s_2}) = b[\omega_{s_1}] + d\eta_{s_1} \]
where $[\cdot]$ denotes the class of a differential form in $H^1_{DR}(\cE_{s_1}/\hat K)$.
Since $[\omega_s]$, $\eta_s$ form a symplectic basis of $H^1_{DR}(\cE_s/\hat K)$ for each~$s$, we have
\[ ad = \det \fullmatrix{a}{b}{0}{d} = M. \]

Looking at homology, there are $p, q, r, s \in \bZ$ such that
\begin{align*}
   f_*(\gamma_{s_1}) & = p\gamma_{s_2} + r\delta_{s_2},
\\ f_*(\delta_{s_1}) & = q\gamma_{s_2} + s\delta_{s_2}.
\end{align*}
Here $\det \fullmatrix{p}{q}{r}{s} = M$.

Since $s_1,s_2,s_3,s_4 \in \Delta_S$, we can apply \eqref{eqn:arch-F-integral} and \eqref{eqn:arch-G-integral}: the evaluations of power series $F(s_1)$, $G(s_1)$ are equal to periods of $\cE_{s_1}$, and so on for $s_2$, $s_3$, $s_4$.
Hence, using equations such as
\[ \int_{\gamma_{s_1}} f^*(\omega_{s_2}) = \int_{f_*(\gamma_{s_1})} \omega_{s_2}, \]
we obtain the period relations
\begin{equation} \label{eqn:c-period-matrix-relation}
\fullmatrix{a}{0}{b}{d} \fullmatrix{F(s_1)}{F^*(s_1)}{G(s_1)}{G^*(s_1)} = \fullmatrix{F(s_2)}{F^*(s_2)}{G(s_2)}{G^*(s_2)} \fullmatrix{p}{q}{r}{s}.
\end{equation}
Multiplying on the left by the row vector $(-G(s_2), F(s_2))$ and using \eqref{eqn:legendre-relation}, we obtain
\begin{align*}
  & \phantom{{}={}} \bigl( -aG(s_2) + bF(s_2), dF(s_2) \bigr) \fullmatrix{F(s_1)}{F^*(s_1)}{G(s_1)}{G^*(s_1)}
\\& = \bigl( -G(s_2)F(s_2) + F(s_2)G(s_2), -G(s_2)F^*(s_2) + F(s_2)G^*(s_2) \bigr) \fullmatrix{p}{q}{r}{s}.
\\& = \bigl( 0, 1/(2\pi i) \bigr) \fullmatrix{p}{q}{r}{s}.
\end{align*}
Multiplying on the right by the column vector $\binom{1}{0}$, we obtain
\begin{equation} \label{eqn:relation-pi}
-aG(s_2)F(s_1) + bF(s_2)F(s_1) + dF(s_2)G(s_1) = \frac{r}{2\pi i}.
\end{equation}

If $r=0$, then \eqref{eqn:relation-pi} is a relation of degree~$2$ with coefficients in $\hat K$ between the $\iota$-adic evaluations of power series $F(s_1)$, $F(s_2)$, $G(s_1)$, $G(s_2)$.
Note that this relation is non-zero because $ad=M \neq 0$.

If $r \neq 0$, then the $2\pi i$ on the right hand side means that \eqref{eqn:relation-pi} does not directly give us a relation between evaluations of $F$ and $G$.
Instead, we apply the same argument as above to $s_3$ and $s_4$, obtaining a relation
\begin{equation} \label{eqn:relation'-pi}
-a'G(s_4)F(s_3) + b'F(s_4)F(s_3) + d'F(s_4)G(s_3) = \frac{r'}{2\pi i}
\end{equation}
where $a', b', d' \in \hat K$ and $r' \in \bZ$.
Combining \eqref{eqn:relation-pi} and \eqref{eqn:relation'-pi}, we obtain
\begin{multline} \label{eqn:relation-quadratic}
  r' \bigl( -aG(s_2)F(s_1) + bF(s_2)F(s_1) + dF(s_2)G(s_1) \bigr)
\\ \mathbin{+} r \bigl( a'G(s_4)F(s_3) - b'F(s_4)F(s_3) - d'F(s_4)G(s_3) \bigr) = 0.
\end{multline}

In other words, the following polynomial in $\hat K[Y_1, Z_1, \dotsc, Y_4, Z_4]$ vanishes at $(F(s_1), G(s_1), \dotsc, F(s_4), G(s_4))$:
\[ P_\iota = r'(-aZ_2Y_1 + bY_2Y_1 + dY_2Z_1) + r(a'Z_4Y_3 - b'Y_4Y_3 - d'Y_4Z_3). \]
Each of the terms in the above expression involves a distinct monomial.
Since we are assuming that $r \neq 0$ and $a'd' = N \neq 0$, the term $ra'Z_4Y_3$ is non-zero.
Hence, this is a non-zero polynomial.

Furthermore, the monomials in the above expression remain distinct after the substitution $Y_1=Y_3$, $Z_1=Z_3$, so $P_\iota \not\in \langle Y_1-Y_3, Z_1-Z_3 \rangle$.
\end{proof}

\begin{remark}
If $r=0$, then we can obtain a relation of degree~$1$ directly from the top left corner of~\eqref{eqn:c-period-matrix-relation}, namely, $aF(S_1) = pF(s_2)$.
Thus $r=0$ corresponds to Case~2 in \cite[X, 2.4]{And89}, while $r \neq 0$ corresponds to Case~3, leading to a quadratic relation and requiring two inhomogeneous relations to eliminate a multiple of $1/2\pi i$.
\end{remark}

\subsection{Non-archimedean period relations}\label{sec:padic-rel}

In order to prove \cref{global-relation}(2), we shall use the following result about isogenies and the Tate uniformisation.

\begin{proposition} \label{isogeny-lifts-tate}
Let $p$ be a prime number and let $\iota$ be the embedding $\QQ \to \CC_p$.  Let $q_1, q_2 \in \Delta_\iota$ be such that there is an isogeny $\tilde{f} \colon \aE_{\iota,q_1} \to \aE_{\iota,q_2}$ of degree~$M$.
There is an integer $m$ dividing $M$ such that the following diagram (in the category of rigid spaces over~$\CC_p$) commutes:
\[ \xymatrix{
    \bG_m^\iotaan    \ar[d]^{\phi_{\iota,q_1}} \ar[r]^{[m]}
  & \bG_m^\iotaan    \ar[d]^{\phi_{\iota,q_2}}
\\  \aE_{\iota,q_1}                              \ar[r]^{\tilde{f}}
  & \aE_{\iota,q_2}
} \]
\end{proposition}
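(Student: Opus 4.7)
The plan is to lift $\tilde{f}$ along the Tate uniformisations to a rigid analytic group homomorphism $\tilde{F} \colon \GG_m \to \GG_m$, classify such homomorphisms, and then extract the divisibility condition. We may assume $q_1, q_2 \neq 0$, as otherwise $\aE_{\iota, q_i} \cong \GG_m$ and either no isogeny exists, or $\tilde{f}$ is itself an algebraic endomorphism of $\GG_m$, for which the conclusion is immediate.

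First, observe that $\phi_{\iota, q_2} \colon \GG_m \to \aE_{\iota, q_2}$ is an étale cover of rigid analytic groups with topologically discrete kernel $q_2^{\ZZ}$. Since $\GG_m$ plays the role of the universal cover of $\aE_{\iota, q_2}$ in this context (cf.~\cite{FvdP04}), the composition $\tilde{f} \circ \phi_{\iota,q_1} \colon \GG_m \to \aE_{\iota, q_2}$ admits a rigid analytic lift $\tilde{F} \colon \GG_m \to \GG_m$ satisfying $\phi_{\iota, q_2} \circ \tilde{F} = \tilde{f} \circ \phi_{\iota, q_1}$, unique up to translation by an element of $q_2^{\ZZ}$. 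Using this freedom, choose $\tilde{F}$ so that $\tilde{F}(1) = 1$; this is possible because $\tilde{F}(1) \in \phi_{\iota,q_2}^{-1}(O) = q_2^{\ZZ}$, where $O$ denotes the identity element of $\aE_{\iota, q_2}$.

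Next, $\tilde{F}$ is a group homomorphism: indeed, $\tilde{F}(uv) \bigl(\tilde{F}(u) \tilde{F}(v)\bigr)^{-1}$ is a rigid analytic function $\GG_m \times \GG_m \to q_2^{\ZZ}$; as the target is topologically discrete and the source is connected, it is constant, and evaluation at $(1,1)$ shows it is identically $1$. Expanding $\tilde{F}$ as a Laurent series $\sum_{n \in \ZZ} a_n u^n$ convergent on all of $\CC_p^{\times}$ and equating coefficients in $\tilde{F}(uv) = \tilde{F}(u) \tilde{F}(v)$ forces $a_n^2 = a_n$ and $a_\ell a_n = 0$ for $\ell \neq n$; hence exactly one coefficient is $1$, and $\tilde{F}(u) = u^m$ for some $m \in \ZZ$.

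Finally, to verify $m \mid M$, set $L := \tilde{F}^{-1}(q_2^{\ZZ}) = \{u \in \CC_p^{\times} : u^m \in q_2^{\ZZ}\}$; by the commutative diagram, $L = \phi_{\iota,q_1}^{-1}(\ker \tilde{f})$ and so $|L/q_1^{\ZZ}| = M$. Fixing $v \in \CC_p^{\times}$ with $v^m = q_2$, one checks that $L = \mu_m \cdot v^{\ZZ}$ as an internal direct product (since $|q_2| < 1$ forces $v^{\ZZ} \cap \mu_m = \{1\}$). Writing $q_1 = \zeta v^k$ for some $\zeta \in \mu_m$ and $k \in \ZZ_{>0}$ (with $k > 0$ because $|q_1| < 1$), the short exact sequence
\[ 1 \to \mu_m \to L/q_1^{\ZZ} \to \ZZ/k\ZZ \to 0 \]
gives $M = mk$, so $m \mid M$. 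The main obstacle is the first step: establishing the lift requires identifying $\GG_m$ as the universal cover of the Tate curve in a suitable rigid analytic category. If a direct reference proves awkward, one can avoid this by defining $\tilde{F}$ directly from the structure of $L$ as a finitely generated subgroup of $\CC_p^{\times}$ whose torsion is $\mu_m$, and then verifying the commutativity of the diagram.
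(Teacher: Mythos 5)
Your argument is correct in substance, but it takes a genuinely different route from the paper: the paper disposes of this proposition in a few lines by quoting Tate's classification of isogenies between Tate curves (\cite{TateCurves}, ``Isogenies'', Theorem), which directly provides integers $m,n$ with $q_1^m=q_2^n$, a commuting square with $[m]$ on $\GG_m$, and $mn=M$, whence $m\mid M$. What you have written is essentially a proof of that quoted theorem: lift $\tilde f\circ\phi_{\iota,q_1}$ through $\phi_{\iota,q_2}$, normalise and show the lift is a homomorphism, classify analytic homomorphisms of $\GG_m$ via Laurent coefficients to get $u\mapsto u^m$, and count $\ker\tilde f\cong L/q_1^{\ZZ}$ with $L=\{u:u^m\in q_2^{\ZZ}\}$. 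The gain is self-containedness and an explicit description of the kernel; the cost is that the one genuinely rigid-analytic input --- the existence of the lift $\tilde F$ --- is precisely what the citation encapsulates, and you correctly flag it as the main obstacle. It does hold: $\phi_{\iota,q_2}$ is the quotient by the discrete group $q_2^{\ZZ}$ acting freely and properly discontinuously, hence a topological covering in the rigid/Berkovich sense, and $\GG_m^{\mathrm{an}}$ over $\CC_p$ admits no non-trivial topological coverings (its Berkovich space is contractible), so the lifting criterion applies; this is the covering theory underlying the uniformisation in \cite[sec.~5.1]{FvdP04}. Once the lift exists, your connectedness argument for the homomorphism property and the coefficient computation $a_n^2=a_n$, $a_na_\ell=0$ are fine.

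Two small repairs. First, your proposed fallback (``define $\tilde F$ directly from the structure of $L$'') is circular as written, since $L$ was defined through $\tilde F$; if you want that route, start from $L:=\phi_{\iota,q_1}^{-1}(\ker\tilde f)$, which makes sense without $\tilde F$. Second, nothing forces $m>0$: composing $\tilde f$ with $[-1]$ replaces $m$ by $-m$, and if $m<0$ then your $v$ satisfies $\abs{v}>1$ and the exponent $k$ in $q_1=\zeta v^k$ is negative, so the correct count is $M=\abs{m}\,\abs{k}$; this still yields $m\mid M$, and both the statement and its later use (\cref{non-arch-relation} runs over positive and negative divisors) permit negative $m$. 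You should also note explicitly that $m\neq 0$, since $m=0$ would make $\tilde f\circ\phi_{\iota,q_1}$ constant, contradicting that $\tilde f$ is an isogeny.
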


\begin{proof}
According to \cite[``Isogenies'', Theorem]{TateCurves}, 
every isogeny between $p$-adic Tate curves is of the form $\alpha_{m,n} \colon \aE_{\iota,q_1} \to \aE_{\iota,q_2}$ for some integers $m$ and $n$ such that $q_1^m = q_2^n$, where $\alpha_{n,m}$ fits into the following commutative diagram:
\[ \xymatrix@C+2em{
    0               \ar[r]
  & \bZ             \ar[r]^-{1 \mapsto q_1} \ar[d]^{[n]}
  & \GG_m^\iotaan   \ar[r]^-{\phi_{\iota,q_1}}  \ar[d]^{[m]}
  & \aE_{\iota,q_1}     \ar[r]                 \ar[d]^{\alpha_{n,m}}
  & 0
\\  0               \ar[r]
  & \bZ             \ar[r]^-{1 \mapsto q_2}
  & \GG_m^\iotaan   \ar[r]^-{\phi_{\iota,q_2}}
  & \aE_{\iota,q_2}     \ar[r]
  & 0
} \]
(note that our $m$ and $n$ are the opposite way round to those in \cite{TateCurves}).
Furthermore, the theorem in \cite{TateCurves} also tells us that $mn = M$ so $m$ divides $M$.
\end{proof}

\begin{proposition} \label{non-arch-relation}
Let $\hat K$ be a number field.
Let $s_1, s_2 \in S^*(\hat K)$ be such that there exists an isogeny $\cE_{s_1} \to \cE_{s_2}$ of degree~$M$ defined over $\hat K$.
Then there exists a set of non-zero homogeneous linear polynomials $P_m \in \hat K[Y_1,Y_2]$, indexed by the positive and negative integers $m$ dividing~$M$, such that, for every non-archimedean embedding $\iota$ of $\hat K$ satisfying $\abs{s_i^\iota} < 1$ for~$i=1,2$, there is some divisor $m$ of $M$ for which
\[ P_m^\iota(F^\iota(s_1^\iota), F^\iota(s_2^\iota)) = 0. \]
\end{proposition}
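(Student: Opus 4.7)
The plan is to use the Tate uniformisation together with \cref{isogeny-lifts-tate} to reduce the isogeny $f \colon \cE_{s_1} \to \cE_{s_2}$ (after transport to the Tate curves) to a multiplication-by-$m$ map on $\GG_m$, and then to pull the canonical invariant differential $du/u$ back through the resulting commutative diagram. This will yield a linear relation between $F^\iota(s_1^\iota)$ and $F^\iota(s_2^\iota)$ whose coefficients depend only on the integer $m$ from \cref{isogeny-lifts-tate} and on the fixed ratio of invariant differentials on $\cE_{s_1}$ and $\cE_{s_2}$ induced by $f$ over $\hat K$.

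To set up, let $a \in \hat K^\times$ be the unique scalar with $f^*\omega_{s_2} = a\omega_{s_1}$. For each non-zero integer $m$ dividing $M$, define the non-zero homogeneous linear polynomial
\[ P_m(Y_1, Y_2) = aY_1 - mY_2 \in \hat K[Y_1, Y_2]. \]
Now fix a non-archimedean embedding $\iota$ of $\hat K$ with $\abs{s_i^\iota} < 1$ for $i = 1, 2$, and set $q_i = \theta^\iota(s_i^\iota)$. Since $\theta \in \powerseries{\ZZ}{X}$ has no constant term and its coefficient of $X$ equals $1$, one checks that $R^\dag(\theta^\iota) \geq 1$ and, using \cref{padic-Rdag-bound}, that $0 < \abs{q_i} < 1$. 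Hence $q_i \in \Delta_\iota \setminus \{0\}$, so \cref{map-to-j} supplies an isomorphism of $\CC_\iota$-algebraic groups $\psi_{\iota, q_i} \colon \aE_{\iota, q_i} \to \cE^\iota_{s_i^\iota}$. Set
\[ \tilde f := \psi_{\iota, q_2}^{-1} \circ f^\iota \circ \psi_{\iota, q_1} \colon \aE_{\iota, q_1} \to \aE_{\iota, q_2}, \]
an isogeny of Tate curves of degree~$M$. By \cref{isogeny-lifts-tate}, there is a non-zero integer $m$ dividing $M$ with $\tilde f \circ \phi_{\iota, q_1} = \phi_{\iota, q_2} \circ [m]$.

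Finally, pulling $\tilde\omega_{\iota, q_2}$ back along both sides of this identity, and using \eqref{eqn:phi-omega} together with $[m]^*(du/u) = m\,du/u$, we obtain
\[ \phi_{\iota, q_1}^*(\tilde f^*\tilde\omega_{\iota, q_2}) = m\,\frac{du}{u} = m\,\phi_{\iota, q_1}^*\tilde\omega_{\iota, q_1}. \]
Since $\tilde f$ is a morphism of algebraic groups, $\tilde f^*\tilde\omega_{\iota, q_2}$ is an invariant differential on $\aE_{\iota, q_1}$, hence a scalar multiple of $\tilde\omega_{\iota, q_1}$; comparing with the previous display forces the scalar to be $m$. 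Unwinding the definition of $\tilde f$ via \cref{map-to-j}(ii) and the relation $(f^\iota)^*\omega^\iota_{s_2^\iota} = a^\iota\omega^\iota_{s_1^\iota}$ converts this into $a^\iota\alpha^\iota(q_1) = m\,\alpha^\iota(q_2)$, that is, $a^\iota F^\iota(s_1^\iota) = m\,F^\iota(s_2^\iota)$, so $P_m^\iota(F^\iota(s_1^\iota), F^\iota(s_2^\iota)) = 0$. The main obstacle is the bookkeeping in this last step: the scalars $\alpha^\iota(q_1)$ and $\alpha^\iota(q_2)$ produced by the $\psi_{\iota, q_i}$ and the scalar $a^\iota$ from $f^\iota$ must combine with the integer $m$ from \cref{isogeny-lifts-tate} so that the resulting relation uses only the fixed $a \in \hat K^\times$, with all $\iota$-dependence absorbed into the choice of divisor $m$.
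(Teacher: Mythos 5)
Your proposal is correct and takes essentially the same route as the paper's proof: you define the same polynomials $P_m = aY_1 - mY_2$, transport $f$ to the Tate curves via the isomorphisms $\psi_{\iota,q_i}$, invoke \cref{isogeny-lifts-tate} to get the divisor $m$, and pull $\tilde\omega_{\iota,q_2}$ back through the commutative diagram to arrive at $a F^\iota(s_1^\iota) = m F^\iota(s_2^\iota)$. The only cosmetic differences are that you verify $q_i \in \Delta_\iota \setminus \{0\}$ explicitly and deduce $\tilde f^*\tilde\omega_{\iota,q_2} = m\,\tilde\omega_{\iota,q_1}$ from one-dimensionality of the space of (invariant) differentials, where the paper instead cites injectivity of $\phi_{\iota,q_1}^*$ on differentials.
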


\begin{proof}
Let $f \colon \cE_{s_1} \to \cE_{s_2}$ be an isogeny of degree~$M$.
As in \eqref{eqn:a-omega}, let $a \in \hat K$ be the scalar such that $f^*(\omega_{s_2}) = a\omega_{s_1}$.  Note that $a$ is independent of the embedding~$\iota$.

For each divisor $m$ of $M$ in $\bZ$, let
\begin{equation} \label{eqn:pm}
P_m(Y_1,Y_2) = aY_1 - mY_2 \in \hat K[Y_1, Y_2].
\end{equation}
We will show that these are the polynomials we need for the conclusion of \cref{non-arch-relation}.

Let $\iota \colon \hat K \to \CC_\iota$ be an embedding such that $s_1^\iota, s_2^\iota \in \Delta_\iota$.
Let $q_1 = \theta^\iota(s_1), q_2 = \theta^\iota(s_2) \in \Delta_\iota$.

Since $\psi_{\iota,q}$ is an isomorphism of elliptic curves, there is a unique isogeny $\tilde{f}_\iota \colon \aE_{\iota,q_1} \to \aE_{\iota,q_2}$ of elliptic curves over $\CC_\iota$ such that the lower square of the following diagram commutes.
Applying \cref{isogeny-lifts-tate} to $\tilde{f}_\iota$, we obtain an integer $m$ dividing $M$ such that the upper square of the diagram commutes.
\begin{equation} \label{eqn:diagram-tilde-f}
\begin{tikzcd}
    \bG_m^\iotaan           \ar[d, "\phi_{\iota,q_1}"] \ar[r, dotted, "{[m]}"]
  & \bG_m^\iotaan           \ar[d, "\phi_{\iota,q_2}"]
\\  \aE_{\iota,q_1}         \ar[d, "\psi_{\iota,q_1}"] \ar[r, dotted, "\tilde{f}_\iota"]
  & \aE_{\iota,q_2}         \ar[d, "\psi_{\iota,q_2}"]
\\  \cE_{s_1}^\iotaan       \ar[r, "f^\iotaan"]
  & \cE_{s_2}^\iotaan
\end{tikzcd}
\end{equation}

We consider~\eqref{eqn:diagram-tilde-f} as a commutative diagram in the category of rigid analytic spaces over~$\CC_\iota$.

Pulling back $\tilde\omega_{\iota,q_2}$ around the upper square of diagram~\eqref{eqn:diagram-tilde-f}, we obtain
\[ \phi_{\iota,q_1}^* \tilde{f}_\iota^{*}(\tilde\omega_{\iota,q_2})
   = [m]^* \phi_{\iota,q_2}^*(\tilde\omega_{\iota,q_2})
   = m\frac{du}{u} = m \, \phi_{\iota,q_1}^*(\tilde\omega_{\iota,q_1}). \]
Since $\phi_{\iota,q_1}^* \colon \Omega^1(\aE_{\iota,q_1}/\CC_\iota) \to \Omega^1(\bG_m/\CC_\iota)$ is injective, it follows that
\[ \tilde{f}_\iota^{*}(\tilde\omega_{\iota,q_2}) = m \, \tilde\omega_{\iota,q_1}. \]
Using this together with \eqref{eqn:F-composition}, \eqref{eqn:a-omega} and the lower square of~\eqref{eqn:diagram-tilde-f}, we can calculate
\begin{align*}
      \tilde{f}_\iota^{*} \psi_{\iota,q_2}^*(\omega^\iota_{s_2})
  & = \tilde{f}_\iota^{*} \bigl( F^\iota(s_2^\iota) \tilde\omega_{\iota,q_2} \bigr)
    = F^\iota(s_2^\iota) m \, \tilde\omega_{\iota,q_1}
\\& = \psi_{\iota,q_1}^* f_\iota^*(\omega^\iota_{s_2})
    = \psi_{\iota,q_1}^*(a \omega^\iota_{s_1})
    = a F^\iota(s_1^\iota) \tilde\omega_{\iota,q_1}.
\end{align*}

Since $\tilde\omega_{\iota,q_1} \neq 0$, we conclude that
\begin{equation} \label{eqn:padic-period-relation}
aF^\iota(s_1^\iota) = mF^\iota(s_2^\iota).
\end{equation}
In other words, \eqref{eqn:pm} gives a relation between the $\iota$-adic evaluations of $F(s_1)$ and $F(s_2)$, as required.
\end{proof}

\section{Proof of lower bound for Galois orbits}\label{sec:proofs}

In this section we prove the height bound \cref{andre-bound} and deduce the Galois orbits bound \cref{galois-bound}.
The proof follows the strategy of \cite[Ch.~X]{And89}, using the polynomial relations between periods of fibres with exceptional isogenies obtained in \cref{global-relation}.
In particular, the work of finding relations between non-archimedean evaluations of $F$ was already carried out in \cref{global-relation}(2).
Thus the only new work in this section needed to make use of non-archimedean evaluations of G-functions is to pay attention to non-archimedean convergence of power series.

In order to be confident that the argument is valid with non-archimedean evaluations, we have written the full argument in detail. This includes expanding the remark on \cite[p.~202]{And89} about finite ramification at $s_0$ when choosing the local parameter, via \cref{curve-ramification}.
To help the reader follow the essential structure of the argument, we have included in Section~\ref{subsec:introductory-case} a summary for a special case in which many of the technical difficulties do not arise.

We found Papas's paper \cite{Papas} very helpful when writing this section, especially for describing the functional relations using André's Normal Monodromy Theorem \cite{And92}, which is simpler than the approach in \cite[X, Lemma~3.3]{And89}.

Please note that the notation $s_1, \dotsc, s_k$ in this section will refer to a collection of base points in the curve~$C_4$ (as defined two paragraphs above \cref{sigma-ell}).
This differs from section~\ref{sec:period-relations}, where $s_1,s_2,s_3,s_4$ referred to reciprocals of $j$-invariants of elliptic curves (in other words, reciprocals of the coordinates of a point in~$Y(1)^4$).

\subsection{A special case} \label{subsec:introductory-case}

In order to illustrate our strategy for the proof of \cref{andre-bound}, we consider a special case in which $\ov{C}$ (the Zariski closure of $C$ in $(\PP^1)^n$) is isomorphic to $\PP^1$.
This demonstrates the central idea of how we exploit the global relations given by \cref{global-relation}, while avoiding technical complications which appear in the general case.
We shall not give full details of all steps in this special case, because full details are given in the general case.

Let $s_0 = (\infty, \dotsc, \infty) \in \ov{C}$.
Choose an isomorphism $x \colon \ov{C} \to \PP^1$ such that $x(s_0) = 0$.
For each $i=1, \dotsc, n$, let $\rho_i \colon \ov{C} \to \PP^1$ denote the reciprocal of the projection onto the $i$-th coordinate (note that $\rho_i(s_0)=0$ for all $i$). Let $\xi_i$ denote the Taylor series of $\rho_i$ in terms of the local parameter $x$ around $s_0$.
Define power series $F_i, G_i \in \powerseries{K}{X}$ by
\[ F_i = F \circ \xi_i, \quad G_i = G \circ \xi_i. \]
For every embedding $\iota$ of $K$, on a suitable neighbourhood of $s_0^\iota$ in $\ov{C}^\iota(\CC_\iota)$, we have
\[ F_i^\iota(x^\iota(s)) = F^\iota(\rho_i^\iota(s)), \]
and similarly for~$G$ (diagram~\eqref{eqn:F-xi-diagram}).

Restricting our attention to $C_0 = \bigcap_i \rho_i^{-1}(S) \subset C$ (throwing away only finitely many points), we may consider the semiabelian schemes $\rho_i^* \cE$ over~$C_0$.
We can check that, for each archimedean embedding $\iota$ of $K$ and on a suitable disc around $s_0^\iota$ in $\ov{C}^\iota(\CC)$, the $\iota$-adic evaluations of $F_i$ and $G_i$ give locally invariant periods of $(\rho_i^* \cE)^\iota$ (equation~\eqref{eqn:arch-Fli-integral}).
It follows that $F_i$ and $G_i$ are G-functions (\cref{G-functions}).

Furthermore, since $C$ is Hodge generic, the semiabelian schemes $\rho_i^* \cE$ are not geometrically generically isogenous.  This implies that there are no non-zero functional relations between the G-functions $\{ F_i, G_i : 1 \leq i \leq n \}$ (\cref{prop:trivial}).

Let $s \in C(\ov\QQ)$ satisfy the hypothesis of \cref{andre-bound}, and let $\hat K = K(s)$.
Since $\Phi_M(s_{i_1}, s_{i_2}) = 0$, there is an isogeny $\cE_{\rho_{i_1}(s)} \to \cE_{\rho_{i_2}(s)}$ of degree~$M$.
Similarly, there is an isogeny $\cE_{\rho_{i_3}(s)} \to \cE_{\rho_{i_4}(s)}$ of degree~$N$.

Applying \cref{global-relation}, we obtain homogeneous polynomials $P_\infty,P_{fin}$ such that, for all embeddings $\hat\iota$ of $\hat K$ in which $s^{\hat\iota}$ is close to $s_0^{\hat\iota}$, we have
(if $\hat\iota$ is archimedean)
\begin{align*}
    0
  & = P_\infty^\iota(F^\iota(\rho_{i_1}(s)^{\hat\iota}), G^\iota(\rho_{i_1}(s)^{\hat\iota}), \dotsc, F^\iota(\rho_{i_4}(s)^{\hat\iota}), G^\iota(\rho_{i_4}(s)^{\hat\iota}))
\\& =P_\infty^\iota(F_{i_1}^\iota(x(s)^{\hat\iota}), G_{i_1}^\iota(x(s)^{\hat\iota}), \dotsc, F_{i_4}^\iota(x(s)^{\hat\iota}), G_{i_4}^\iota(x(s)^{\hat\iota}))
\end{align*}
and (if $\hat\iota$ is non-archimedean)
\begin{align*}
    0
  & = P_{fin}^\iota(F^\iota(\rho_{i_1}(s)^{\hat\iota}), F^\iota(\rho_{i_2}(s)^{\hat\iota}))
    = P_{fin}^\iota(F_{i_1}^\iota(x(s)^{\hat\iota}), F_{i_2}^\iota(x(s)^{\hat\iota})).
\end{align*}
Define a polynomial $Q \in \hat K[Y_1, Z_1, \dotsc, Y_n, Z_n]$ as follows:
\[ Q(\underline{Y}, \underline{Z}) = P_\infty(Y_{i_1}, Z_{i_1}, \dotsc, Y_{i_4}, Z_{i_4}) \cdot P_{fin}(Y_{i_1}, Y_{i_2}). \]
Then
\[ Q^\iota(F_i^\iota(x(s)^{\hat\iota}), G_i^\iota(x(s)^{\hat\iota}) : 1 \leq i \leq n) = 0 \]
for every embedding $\hat\iota$ at which $x^\iota(s^{\hat\iota})$ is small (since the only zero of $x$ in $\ov{C}$ is at $s_0$, this is equivalent to $s^{\hat\iota}$ being close to $s_0^{\hat\iota}$).

By paying careful attention to the radii within which the above manipulations of power series are valid, we can conclude that $Q$ is a global relation between the evaluations at $x(s)$ of the G-functions $F_1, G_1, \dotsc, F_n, G_n$, along with a rational function which we add to the set of G-functions in order to control the radii of convergence in the definition of global relation (\cref{global-relation-applied}).

By consideration of the possible coincidences between the indices $i_1, i_2, i_3, i_4$, and using the facts that $P_\infty \not\in \langle Y_1-Y_3, Z_1-Z_3 \rangle$ and $P_{fin} \neq 0$, we can ensure that $Q \neq 0$, and thus $Q$ is not a trivial relation.

From the degree bounds in \cref{global-relation}, we have
\[ \deg(Q) \leq 2[\hat K:\QQ] + 2d(M) \ll \max\{ [K(s):K], M^\epsilon \}, \]
using the fact that, for every $\epsilon > 0$,
\begin{equation} \label{eqn:dN-bound}
d(N) \ll_\epsilon N^\epsilon
\end{equation}
(see \cite[Theorem~315]{HardyWright}).
We can thus complete the proof of this special case of \cref{andre-bound} by applying \cref{hasse-principle-E}.

\subsection{Outline of technical issues in the general case} \label{subsec:technical-issues}

The proof of a special case of \cref{andre-bound} in Section~\ref{subsec:introductory-case} made use of two properties of the morphism $x \colon \ov{C} \to \PP^1$: the only zero of $x$ was at $s_0=(\infty, \dotsc, \infty)$ and $x$ was a local parameter at $s_0$.
A morphism with these properties must have degree~$1$, so can only exist when $\ov{C}$ is a rational curve.

In order to prove \cref{andre-bound} when $\ov{C}$ is not rational, we will replace $\ov{C}$ by another curve $C_4$, equipped with a surjective morphism $C_4 \to \ov{C}$ and with a finite set of points $s_1, \dotsc, s_\ell \in C_4$ mapping to $s_0 \in \ov{C}$.  We will choose $C_4$ so that there is a morphism $x \colon C_4 \to \PP^1$ which vanishes only at $s_1, \dotsc, s_\ell$ and which is a local parameter at each of $s_1, \dotsc, s_\ell$.
This comes as close as we can to the properties of $x$ used in Section~\ref{subsec:introductory-case}, at the cost of having finitely many degeneration points $s_1, \dotsc, s_\ell$ instead of the single point $s_0$.

If $s \in C_4(\hat K)$ and $x(s)^{\hat\iota}$ is small for some embedding $\hat\iota$ of $\hat K$, then $s^{\hat\iota}$ is close to one of the $s_k^{\hat\iota}$.  However, even for a fixed $\hat K$-point~$s$, $s^\iota$ might be close to $s_k^{\hat\iota}$ for different~$k$ depending on the embedding~$\hat\iota$.  We therefore have to look for relations between locally invariant periods of the semiabelian schemes $(\rho_i^*\cE)^{\hat\iota}$ near each of $s_1^{\hat\iota}, \dotsc, s_\ell^{\hat\iota}$ (where $\rho_i$ denotes the reciprocal of a coordinate on $\ov{C}$, composed with the map $C_4 \to \ov{C}$).

Working with periods around different degeneration points is awkward, especially when it comes to understanding functional relations between the period functions.  We therefore impose an extra condition on $C_4$: for every $k=1,\dotsc,\ell$, $C_4$ has an automorphism such that $\sigma_k(s_1) = s_k$.
We can then work with the semiabelian scheme $\sigma_k^*\rho_i^*\cE$ in a neighbourhood of $s_1$ instead of $\rho_i^*\cE$ in a neighbourhood of $s_k$.

There may exist isogenies between the semiabelian schemes $\sigma_k^*\rho_i^*\cE$ (over a geometric generic point of the base), leading to functional relations between the associated period functions.
We therefore choose one semiabelian scheme from each isogeny class among the $\sigma_k^*\rho_i^*\cE$.
These are the semiabelian schemes which we will call $\cA_\lambda$.
Their locally invariant periods around $s_1$ give us a collection of G-functions with no functional relations.

After the technical setup summarised above, we will finally be able to complete the proof by finding global relations between evaluations of these G-functions at exceptional points along the lines explained in Section~\ref{subsec:introductory-case}.

\subsection{Curves and local parameters}

As explained in Section~\ref{subsec:technical-issues}, we need to replace $\ov{C}$ by a curve $C_4$ with a suitable rational function $x$ on $C_4$ which we will use as a local parameter at each of finitely many degeneration points.
The following lemma constructs $C_4$ and $x$ with the required properties.

\begin{lemma} \label{curve-ramification}
Let $C$ be an irreducible projective algebraic curve over a field $K$ of characteristic zero.
Let $s_0$ be a closed point of~$C$.
Then, after replacing $K$ by a finite extension, there exists an irreducible smooth projective algebraic curve $C_4$ over $K$, a non-constant morphism $\nu \colon C_4 \to C$ and a non-constant rational function $x \in K(C_4)$ such that
\begin{enumerate}[(i)]
\item every point $s \in C_4(\ov K)$ where $x(s)=0$ is a simple zero of $x$;
\item every point $s \in C_4(\ov K)$ where $x(s)=0$ satisfies $\nu(s)=s_0$;
\item $x \colon C_4 \to \PP^1$ is Galois, that is, $\Aut_x(C_4) := \{ \sigma \in \Aut(C_4) : x \circ \sigma = x \}$ acts transitively on each $\ov K$-fibre of $x$.
\end{enumerate}
\end{lemma}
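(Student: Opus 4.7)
The plan is to construct $C_4$ in three successive stages, each designed to achieve one of the required properties.

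\emph{Stage 1 (normalization and initial function):} Let $\tilde{C} \to C$ be the normalization, a smooth projective curve over $K$, and enlarge $K$ so that the preimages $\{P_1,\dotsc,P_r\}$ of $s_0$ in $\tilde C$ are all $K$-rational. For $n$ sufficiently large that $\mathcal{O}(n\sum_i P_i)$ is very ample on $\tilde{C}$, Riemann--Roch and a Bertini-type argument provide a reduced divisor $D' \in |n\sum_i P_i|$ of degree $nr$, defined over $K$ and supported away from $\{P_i\}$. The associated rational function $y \in K(\tilde{C})$ with $(y) = n\sum_i P_i - D'$ is non-constant with zero divisor exactly $n\sum_i P_i$, so its zeros all lie above $s_0$, though possibly with multiplicity~$n$.

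\emph{Stage 2 (simplifying the zeros via an $N$-th root):} Take $N = n$ and let $C_3$ be the normalization of the fiber product $\tilde{C} \times_{\PP^1,\, z\mapsto z^N} \PP^1$ (the first map being $y$); this carries a tautological function $z$ with $z^N = y$ pulled back. Because $y$ has simple poles (on $D'$) and hence is not a nontrivial power in $\bar K(\tilde C)$, Capelli's theorem shows that $z^N - y$ is irreducible over $\bar K(\tilde C)$, so $C_3$ is geometrically irreducible, defined over $K$, and smooth. A local calculation at each $P_i$, writing $y = t^n u$ for a uniformizer $t$ and a unit $u$, shows that $z/t$ becomes integral in the normalization, so that $z$ is itself a uniformizer at each of the $n$ preimages of $P_i$ in $C_3$. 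Hence the zeros of $z$ are simple and all lie above $\{P_i\}$.

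\emph{Stage 3 (Galois closure):} Let $C_4$ be the smooth projective curve associated with the Galois closure $M$ of $K(C_3)$ over $K(z)$; enlarge $K$ to contain the field of constants of $M$. The pullback $x \in K(C_4)$ of $z$ gives a Galois cover $x : C_4 \to \PP^1$, establishing (iii). Since $z : C_3 \to \PP^1$ is unramified above $0$ (its zeros being simple), a standard compositum argument -- a compositum of conjugates of an extension unramified at a place is itself unramified there -- shows that $M/K(z)$ is unramified above $0$. Hence $x$ still has only simple zeros, and these zeros map via $C_4 \to C_3 \to \tilde{C} \to C$ to $s_0$, giving (i) and (ii).

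The main obstacle is the tension between simplicity of zeros~(i) and the Galois property~(iii): ordinarily, passing to a Galois closure introduces new ramification. This is circumvented by achieving~(i) on $C_3$ \emph{before} taking the Galois closure, so that the resulting Galois cover automatically inherits the absence of ramification above~$0$.
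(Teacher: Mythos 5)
Your proof is correct and follows essentially the same route as the paper's: a Riemann--Roch function whose zeros lie only above $s_0$, extraction of a root of matching order to kill the ramification at those zeros (the paper phrases this as a fibre product with the power map and invokes Abhyankar's lemma where you compute locally), and finally a Galois closure, observing that this cannot reintroduce ramification above $0$. The only cosmetic differences are that you spread the zero divisor over all preimages of $s_0$ (using Bertini to get a reduced polar divisor and Capelli for irreducibility of $z^N-y$), whereas the paper concentrates the zeros at a single preimage and simply selects an irreducible component of the fibre product.
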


\begin{proof}
Let $\nu_1 \colon C_1 \to C$ denote the normalisation of $C$.
Let $t$ be a point of $C_1$ such that $\nu_1(t)=s_0$.
(There may be more than one such point; choose one of them.)

By the Riemann--Roch theorem, there exists a rational function on~$C_1$ which has a pole at $t$ and nowhere else.  Taking the reciprocal of this function, we obtain a rational function~$x_1 \in K(C_1)$ with a zero at~$t$ and nowhere else.
Let $d\in\bN$ denote the order of vanishing of $x_1$ at $t$.

Let $C_2$ denote an irreducible component of the fibre product of $x_1 \colon C_1 \to \PP^1$ with the $d$-th power map $[d] \colon \PP^1 \to \PP^1$.
Let $\nu_2 \colon C_3 \to C_2$ denote the normalisation of $C_2$ and let $x_3$ and $\kappa$ be the morphisms indicated in the following commutative diagram:
\[ \begin{tikzcd}
    C_3 \arrow[r, swap, "\nu_2"] \arrow[rd, swap, "x_3"]
  & C_2 \arrow[r, swap, "\kappa"] \arrow[d]
  & C_1 \arrow[d, "x_1"] \arrow[r, swap, "\nu_1"]
  & C
\\& \bP^1 \arrow[r, "{[d]}"]
  & \bP^1
  &
\end{tikzcd} \]

After replacing $K$ by a finite extension, we may construct a smooth projective curve $C_4$ over~$K$ with a morphism $\nu_3 \colon C_4 \to C_3$ which is the Galois closure of $x_3 \colon C_3 \to \PP^1$.
Let $\nu$ and $x$ be the morphisms indicated in the following diagram:
\begin{equation} \label{eqn:curves-diagram}
\begin{tikzcd}
    C_4 \arrow[r, swap, "\nu_3"] \arrow[rrd, swap, "x"] \arrow[rrrr, bend left=20, "\nu"]
  & C_3 \arrow[r, swap, "\nu_2"] \arrow[rd]
  & C_2 \arrow[r, swap, "\kappa"] \arrow[d]
  & C_1 \arrow[d, "x_1"] \arrow[r, swap, "\nu_1"]
  & C
\\&& \bP^1 \arrow[r, "{[d]}"]
  & \bP^1
  &
\end{tikzcd}
\end{equation}

Consider a point $s \in C_4(\ov K)$ such that $x(s) = 0$.
Then $x_1\kappa\nu_2\nu_3(s) = [d]x(s) = 0$.
Hence by the choice of $x_1$, $\kappa\nu_2\nu_3(s)=t$ so $\nu(s)=\nu_1(t)=s_0$, establishing (ii).
Furthermore~(iii) holds because we chose $C_4$ to be the Galois closure of $x_3 \colon C_3 \to \PP^1$.

Since $t$ is the only zero of $x_1$ in $\PP^1$, and since the ramification degree of $x_1 \colon C_1 \to \PP^1$ at~$t$ is equal to the ramification degree of $[d] \colon \PP^1 \to \PP^1$ at~$0$, by Abhyankar's lemma \cite[\href{https://stacks.math.columbia.edu/tag/0BRM}{Tag 0BRM}]{stacks}, $x_3 \colon C_3 \to \PP^1$ is unramified at every zero of $x_3$ in $C_3$.
(Note that the function field $K(C_3)=K(C_2)$ is a direct factor of $K(C_1) \otimes_{K(\PP^1),[d]} K(\PP^1)$.)
Passing to the Galois closure, it follows that $x \colon C_4 \to \PP^1$  is unramified at every zero of $x$ in $C_4$, proving~(i).
\end{proof}

In the setting of \cref{andre-bound}, $C \subset \AAA^n$ is a geometrically irreducible Hodge generic curve defined over a number field~$K$ that intersects infinity.
Let $\overline{C}$ denote the Zariski closure of $C$ in $(\bP^1)^n$ and let $s_0 = (\infty, \dotsc, \infty) \in \overline{C}$.

Applying \cref{curve-ramification} to $\overline{C}$, after replacing $K$ by a finite extension, we obtain a smooth projective algebraic curve $C_4$ with a morphism $\nu \colon C_4 \to \overline{C}$ and a rational function $x \in K(C_4)$.
Replace the field $K$ by a further finite extension so that all zeros of $x$ in $C_4$ are defined over $K$.
Label the zeros of $x$ in $C_4(K)$ as $s_1, \dotsc, s_\ell$.
Since these zeros are simple, $x$ is a local parameter for $C_4$ around $s_k$ (which is to say, $x$ is a uniformiser in the local ring $\cO_{C_4,s_k}$) for all $k=1,\ldots,\ell$.

Let $\zeta_1, \dotsc, \zeta_n \colon \overline{C} \to \PP^1$ denote the reciprocals of the projections onto the $\PP^1$ factors.
For each $i=1,\dotsc,n$, let $\rho_i = \zeta_i \circ \nu \colon C_4 \to \PP^1$.

\begin{remark} \label{sigma-ell}
By \cref{curve-ramification}(iii), for each $k=1,\dotsc,\ell$, there is an automorphism $\sigma_k \in \Aut_x(C_4)$ such that $\sigma_k(s_1) = s_k$.

Since $x$ is unramified at $s_1, \dotsc, s_\ell$, we have $\deg(x)=\ell$.
Since $x$ is Galois, $\#\Aut_x(C_4) = \deg(x)$.
It follows that $\sigma_1, \dotsc, \sigma_\ell$ are the only elements of $\Aut_x(C_4)$.
\end{remark}

\subsection{Semiabelian schemes}\label{subsec:semiab}

We wish to pull back the $1/j$-family of elliptic curves $\cE$ to $C_4$ via $\rho_i$.
Note that $\cE$ is defined only over $S = \bA^1 \setminus \{1/1728\}$, not over $\PP^1$, so we must replace $C_4$ by a Zariski open subset in order to be able to do this.
Define
\[ C_0 = \{ s \in C_4 : \rho_i(\sigma_k(s)) \in S \text{ for all } k=1,\dotsc,\ell, \, i=1,\dotsc,n \}. \]
Similarly define
\[ C_0^* = \{ s \in C_4 : \rho_i(\sigma_k(s)) \in S^* \text{ for all } k=1,\dotsc,\ell, \, i=1,\dotsc,n \}. \]

Since the morphisms $\rho_i\circ\sigma_k$ are non-constant, each of $C_0$ and $C_0^*$ is the complement of finitely many points in $C_4$, so they are non-empty Zariski open subsets of $C_4$.
Note also that $\sigma_k$ restricts to an automorphism of $C_0$ for each $k=1, \dotsc, \ell$.

\medskip

By the definition of $C_0$, we have $\rho_i(C_0) \subset S$.
We can therefore define $\cA_i = (\rho_i|_{C_0})^*\cE$.
In other words, $\cA_i$ is a semiabelian scheme over $C_0$ which fits into the following commutative diagram:
\begin{equation} \label{eqn:Ai-diagram}
\begin{tikzcd}
    \cA_i   \arrow[d, "\pi_i"] \arrow[rr, "\theta_i"]
  &
  & \cE     \arrow[d, "\pi"]
\\  C_0     \arrow[rr, "\rho_i|_{C_0}"] \arrow[d, phantom, sloped, "\subset"]
  &
  & S       \arrow[d, phantom, sloped, "\subset"]
\\  C_4     \arrow[r, "\nu"]
  & \ov C   \arrow[r, "\zeta_i"]
  & \PP^1
\end{tikzcd}
\end{equation}
Since $\rho_i(C_0^*) \subset S^*$, the fibres of $\cA_i$ over $C_0^*$ are elliptic curves.
Meanwhile the fibres over $s_1, \dotsc, s_\ell \in C_0$ are isomorphic to $\bG_m$, because $\rho_i(s_k) = 0$ for all $i$ and $k$.

There may exist points $s \in C_0 \setminus (C_0^* \cup \{ s_1, \dotsc, s_\ell \})$.
For such points, the fibres $\cA_{i,s}$ may be elliptic curves for some~$i$ and isomorphic to~$\bG_m$ for other~$i$.  Such points will not appear further in our analysis.

\medskip

As explained in Section~\ref{subsec:technical-issues}, we will need to analyse periods of $\cA_i$ near the points $s_1, \dotsc, s_\ell$ (where ``near'' is interpreted with respect to an embedding of $K$).
It is easier to compare periods of a larger number of semiabelian schemes over a neighbourhood of a single point, so we will instead consider the semiabelian schemes $\sigma_k^* \cA_i$ ($1 \leq i \leq n$, $1 \leq k \leq \ell$), always near the point~$s_1$.
The periods of $\sigma_k^* \cA_i$ near $s_1$ are the same as the periods of $\cA_i$ near $s_k$.
In order to avoid functional relations between the periods of the schemes $\sigma_k^*\cA_i$, we pick out one of these schemes in each generic isogeny class.

More precisely, let $\bar\eta$ denote a geometric generic point of $C_0$.
Define an equivalence relation $\sim$ on $\{ 1, \dotsc, \ell \} \times \{ 1, \dotsc, n \}$ by
\[ (k,i) \sim (k',i') \text{ if there exists an isogeny } \sigma_k^*\cA_{i,\bar\eta} \to \sigma_{k'}^* \cA_{i',\bar\eta}. \]
Let $\Lambda$ denote a set of representatives of the equivalence classes for~$\sim$.

We define some convenient notation:
\begin{itemize}
\item Given $(k,i) \in \{ 1, \dotsc, \ell \} \times \{ 1, \dotsc, n \}$, let $[k,i]$ be the unique element of $\Lambda$ such that $[k,i] \sim (k,i)$.
\item If $\lambda = (k,i) \in \Lambda$, we shall write $\cA_\lambda = \sigma_k^* \cA_i$ and $\rho_\lambda = \rho_i \circ \sigma_k \colon C_0 \to \PP^1$.
\end{itemize}

Note that combining these two pieces of notation leads to $\cA_{[k,i]} = \sigma_{k'}^*\cA_{i'}$ where $[k,i] = (k',i')$.  Thus if $(k,i) \not\in \Lambda$, $\cA_{[k,i]}$ need not be equal to $\sigma_k^*\cA_i$.
However $\cA_{[k,i],\bar\eta}$ is always isogenous to $\sigma_k^*\cA_{i,\bar\eta}$.
Thanks to this isogeny, periods of $\cA_{[k,i]}$ near $s_1$ are $\ov K$-linear combinations of periods of $\cA_i$ near $s_k$.

Define $T$ to be a positive integer such that, for every $k = 1, \dotsc, \ell$ and $i=1, \dotsc, n$, there exists an isogeny $\cA_{[k,i],\bar\eta} \to \sigma_k^*\cA_{i,\bar\eta}$ of degree dividing~$T$.

\begin{remark} \label{no-isogenies-same-l}
Since $C$ is Hodge generic in $\AAA^n$, there are no isogenies $\cA_{i,\bar\eta} \to \cA_{i',\bar\eta}$ for distinct $i, i'$.
It follows that $(k,i) \not\sim (k,i')$ for all~$k$ and all~$i \neq i'$.
\end{remark}

The following lemma is based on the remark following Lemma~3.5 in \cite{GP23}, augmented with additional information about the degree of the isogeny.

\begin{lemma} \label{isogeny-specialisation}
Let $S$ be a connected normal noetherian scheme with geometric generic point~$\bar\eta$.
Let $A$, $B$ be abelian schemes over~$S$.
Suppose that there exists an isogeny $\phi \colon A_{\bar\eta} \to B_{\bar\eta}$.
Then, for every algebraically closed field $\ov K$ and every geometric point $\bar s \in S(\ov K)$, there exists an isogeny $\phi_{\bar s} \colon A_{\bar s} \to B_{\bar s}$ of the same degree as~$\phi$.
\end{lemma}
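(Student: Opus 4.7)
The plan is to descend $\phi$ to an isogeny defined on a finite cover of $S$, extend it to a global homomorphism between the pullbacks of $A$ and $B$ using the standard extension property for homomorphisms of abelian schemes over a connected normal base, and then specialise at $\bar s$ while tracking the degree.

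First, since $\phi$ is a morphism of $\ov{k(\eta)}$-varieties whose source and target come from abelian schemes over $S$, it is defined over some finite extension $L$ of the function field $k(\eta)$. Let $T \to S$ be an integral scheme finite and surjective over $S$ with function field $L$ (for example, the normalisation of $S$ in $L$). Then $T$ is a connected normal noetherian scheme, and $\phi$ defines an isogeny $\phi_{\eta_T} \colon A_{T,\eta_T} \to B_{T,\eta_T}$ over the generic point $\eta_T$ of $T$.

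Next, I would invoke the fact that over a connected normal noetherian base, any homomorphism between the generic fibres of two abelian schemes extends uniquely to a global homomorphism. Applying this to $\phi_{\eta_T}$ yields $\phi_T \colon A_T \to B_T$. To control the degree on each fibre, set $n = \deg(\phi)$ and let $\psi_{\eta_T}$ denote a dual isogeny satisfying $\psi_{\eta_T} \circ \phi_{\eta_T} = [n]_{A_{T,\eta_T}}$ and $\phi_{\eta_T} \circ \psi_{\eta_T} = [n]_{B_{T,\eta_T}}$. Extending $\psi_{\eta_T}$ to $\psi_T$ and using the uniqueness of extensions, both identities hold on all of $T$. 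Consequently $\ker(\phi_T) \subset A_T[n]$, which is a finite flat group scheme over $T$, so $\ker(\phi_T)$ itself is finite flat of locally constant rank, which must equal the generic value $n$ since $T$ is connected. Surjectivity of $\phi_T$ on every geometric fibre follows from $\phi_T \circ \psi_T = [n]_{B_T}$, so $\phi_T$ is an isogeny of degree $n$ on every fibre of $T$.

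Finally, given $\bar s \in S(\ov K)$, the fibre $T \times_S \Spec(\ov K)$ is a non-empty finite $\ov K$-scheme and therefore admits an $\ov K$-point $\bar s'$ lifting $\bar s$; the pullback of $\phi_T$ along $\bar s'$, together with the canonical identifications $A_{T,\bar s'} \cong A_{\bar s}$ and $B_{T,\bar s'} \cong B_{\bar s}$, yields the required isogeny of degree $n$. The main subtlety is in the extension step: simply extending the homomorphism does not guarantee that the isogeny property survives on degenerate fibres. It is the simultaneous extension of the dual $\psi_{\eta_T}$, forcing $\ker(\phi_T) \subset A_T[n]$, that provides the a priori finite flat control on the kernel and pins the degree across all of $T$.
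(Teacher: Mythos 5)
Your proposal is essentially sound and takes a genuinely different route from the paper, while resting on the same core ingredient. The paper first inverts a prime $\ell \neq \characteristic(\ov K)$, passes to the finite \'etale cover $S'$ of $S[\ell^{-1}]$ on which the $\ell$-adic monodromy group becomes connected, and then quotes \cite{GP23} (Lemmas 2.8 and 3.4, whose core is \cite[Prop.~I.2.7]{FC90}) to identify $\Hom_{S'}(A_{S'},B_{S'})$ with $\Hom_{\bar\eta}(A_{\bar\eta},B_{\bar\eta})$, and \cite[Lemma~3.6]{GP23} to see that the spread-out homomorphism is an isogeny with finite locally free kernel, which pins the degree. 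You instead descend $\phi$ to a finite extension $L$ of $k(\eta)$, spread out over the normalisation $T$ of $S$ in $L$, and use the same Faltings--Chai extension theorem over the normal base $T$, with the dual-isogeny trick $\psi\circ\phi=[n]$, $\phi\circ\psi=[n]$ (propagated to all of $T$ by uniqueness of extensions) replacing the citation of \cite[Lemma~3.6]{GP23}. This is more elementary: no \'etale fundamental group, no choice of $\ell$, no restriction to $S[\ell^{-1}]$. Two small points of hygiene: you should take $L/k(\eta)$ finite \emph{separable} (possible, since homomorphisms of abelian varieties are defined over a separable closure), so that the normalisation of the noetherian normal $S$ in $L$ is actually finite over $S$ and $T$ is noetherian; and $\psi$ is then automatically defined over $L$ as the factorisation of $[n]$ through $\phi_L$.

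The one step that does not hold as stated is the flatness claim: from $\ker(\phi_T)\subset A_T[n]$ you conclude that $\ker(\phi_T)$ ``is finite flat of locally constant rank.'' Closedness in $A_T[n]$ gives finiteness, but a closed subgroup scheme of a finite flat group scheme need not be flat (over a discrete valuation ring, the union of the identity section with the whole special fibre inside a constant group scheme is a closed subgroup scheme whose fibre rank jumps), and constancy of the rank is exactly what you need, so this cannot be waved through. The repair is standard and fits your framework: on each fibre, $\phi_{T,t}$ has finite kernel (contained in $A_{T,t}[n]$) and is surjective (because $\phi_{T,t}\circ\psi_{T,t}=[n]$), hence is a finite surjective morphism between smooth projective varieties of the same dimension and is therefore flat by miracle flatness; since $A_T$ and $B_T$ are flat over $T$, the fibrewise criterion for flatness shows $\phi_T$ itself is flat, and then $\ker(\phi_T)$, being the base change of $\phi_T$ along the zero section of $B_T$, is finite flat over $T$, so its rank is constant on the connected scheme $T$ and equals the generic value $\deg(\phi)$. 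This is precisely the content packaged in the paper's appeal to \cite[Lemma~3.6]{GP23} and the final sentence of its proof. With that substitution your argument is complete.
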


\begin{proof}
Let $\ell$ be a prime different from the characteristic of~$\ov K$.
Let $S[\ell^{-1}] = S \times_{\Spec(\ZZ)} \Spec(\ZZ[\ell^{-1}])$, so that $\bar s$ factors through $S[\ell^{-1}]$.
Let
\[ \rho_\ell \colon \pi_1^{et}(S[\ell^{-1}]) \to \GL(T_\ell(A \times_S B)_{\bar\eta} \otimes_{\ZZ_\ell} \QQ_\ell) \]
denote the $\ell$-adic monodromy representation, and let $G_\ell$ denote the Zariski closure of the image of~$\rho_\ell$.
Let $S'$ be the finite \'etale cover of~$S[\ell^{-1}]$ corresponding to the subgroup $\rho_\ell^{-1}(G_\ell^\circ) \subset \pi_1^{et}(S[\ell^{-1}])$.

Let $\eta'$ denote the generic point of $S'$.
By \cite[Lemma~2.8 and Lemma~3.4]{GP23}, the base-change maps
\[ \Hom_{S'}(A_{S'}, B_{S'}) \to \Hom_{\eta'}(A_{\eta'}, B_{\eta'}) \to \Hom_{\bar\eta}(A_{\bar\eta}, B_{\bar\eta}) \]
are bijective (the core of the proof of this is \cite[Prop.~I.2.7]{FC90}).
Hence $\phi \colon A_{\bar\eta} \to B_{\bar\eta}$ is the base change of a homomorphism $\phi' \colon A_{S'} \to B_{S'}$.
By \cite[Lemma~3.6]{GP23}, $\phi'$ is an isogeny.

Choose a geometric point $\bar s' \in S'(\ov K)$ which lifts~$\bar s$.
Note that $A_{\bar s}$ is equal to the specialisation of $A_{S'}$ at~$\bar s'$, and similarly for $B_{\bar s}$.
Hence, the specialisation $\phi'_{\bar s'}$ of~$\phi'$ is an isogeny $A_{\bar s} \to B_{\bar s}$.

Since $\ker(\phi')$ is a locally free finite $S'$-group scheme, $\deg(\phi'_{\bar s'}) = \deg(\phi)$.
\end{proof}

\subsection{Neighbourhoods}

It is well-known that, if $x$ is a local parameter at a point~$s_0$ on an algebraic curve~$C$ over a number field~$K$, then for every embedding $\iota \colon K \to \CC_\iota$, $x^\iota$ restricts to an analytic isomorphism between a sufficiently small neighbourhood of $s_0^\iota$ in $C^\iota(\CC_\iota)$ and an open disc in $\CC_\iota$.
The following lemma is a formal statement of the properties we shall require of these analytic isomorphisms.
A key point for us is that the discs can be taken to have radius at least~$1$ for almost all embeddings~$\iota$; since this fact is perhaps not so well-known as the rest, we have written out a proof of the lemma.

In the subsequent sections of the paper, we will only use the neighbourhood $U_{\iota,1}$.
However, the full collection of open neighbourhoods $U_{\iota,2}, \dotsc, U_{\iota,\ell}$ are needed in the proof of the properties of $U_{\iota,1}$.
In particular, (iii) is used in the proof that $x^\iota|_{U_{\iota,1}}$ is injective, and this in turn is used in the proof of~(v).


\begin{definition}
    Let $C$ be an algebraic curve over a field $K$ and let $s\in C(K)$ be a smooth $K$-point. Let $\mathfrak{m}$ denote the maximal ideal of $\cO_{C,s}$ and let $\hat{\cO}_{C,s}$ denote the $\mathfrak{m}$-adic completion of $\cO_{C,s}$. Let $x\in K(C)$ denote a local parameter for $C$ at $s$ and let $\Tay:\hat{\cO}_{C,s}\cong \powerseries{K}{X}$ denote the unique isomorphism of topological $K$-algebras which satisfies $\Tay(x)=X$. For any $f\in K(C)$ which is regular at $s$, we refer to $\Tay(f)\in \powerseries{K}{X}$ as the \defterm{Taylor series} of $f$ around $s$ in terms of $x$. 
\end{definition}

\begin{lemma} \label{padic-local-isoms}
Let $C_0$ be a smooth algebraic curve over a number field~$K$.
Let $x \in K(C_0)$ be a rational function of degree~$\ell$, such that $x$ has $\ell$ distinct unramified zeros $s_1, \dotsc, s_\ell \in C_0(K)$.
Then for each embedding $\iota$ of $K$, there exists a real number $r_\iota > 0$ and open sets $U_{\iota,1}, \dotsc, U_{\iota,\ell} \subset C_0^\iota(\CC_\iota)$ with the following properties:
\begin{enumerate}[(i)]
\item $r_\iota \geq 1$ for almost all embeddings~$\iota$ of~$K$;
\item $s_k^\iota \in U_{\iota,k}$;
\item for each $\iota$, the sets $U_{\iota,1}, \dotsc, U_{\iota,\ell}$ are pairwise disjoint and $U_{\iota,1} \cup \dotsb \cup U_{\iota,\ell}$ is equal to the preimage under $x^\iota$ of $D(0,r_\iota,\CC_\iota)$;
\item for each $\iota$ and $k$, $x^\iota$ restricts to a bijection from $U_{\iota,k}$ to the open disc $D(0,r_\iota,\CC_\iota)$;
\item for every rational function $f \in K(C_0)$ which is regular at $s_k$, if $\hat f \in \powerseries{K}{X}$ denotes the Taylor series of $f$ around $s_k$ in terms of the local parameter~$x$,
then, for all $s \in U_{\iota,k}$ satisfying $\abs{x^\iota(s)} < R(\hat f^\iota)$, we have $\hat f^\iota(x^\iota(s)) = f^\iota(s)$.
\end{enumerate}
\end{lemma}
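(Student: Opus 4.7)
The plan is to split the embeddings into two groups: almost all non-archimedean places, where a spreading-out argument yields $r_\iota = 1$ uniformly, and the finitely many remaining embeddings (archimedean, plus finitely many finite places of ``bad reduction''), handled by direct local analytic considerations.

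First I would spread out the data: choose a finite set $\Xi$ of places of $K$ including all archimedean places, and a smooth model $\mathfrak{C}_0$ of $C_0$ over $\cO_{K,\Xi}$ such that (a) the points $s_1,\dotsc,s_\ell$ extend to pairwise disjoint sections of $\mathfrak{C}_0 \to \mathrm{Spec}\,\cO_{K,\Xi}$, and (b) $x$ extends to a morphism $\mathfrak{C}_0 \to \PP^1_{\cO_{K,\Xi}}$ whose scheme-theoretic zero locus is the union of these $\ell$ sections and is étale over $\mathrm{Spec}\,\cO_{K,\Xi}$. Such $\Xi$ and $\mathfrak{C}_0$ exist by standard spreading-out, using smoothness of $C_0$, the fact that $x$ is unramified at each $s_k$, and the distinctness of the $s_k$.

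For a non-archimedean place $v\notin\Xi$ with corresponding embedding $\iota$, I would set $r_\iota = 1$. By (b) and Hensel's lemma for étale morphisms of formal schemes, the formal completion of $\mathfrak{C}_0$ along $s_k$ is isomorphic to $\mathrm{Spf}\,\powerseries{\cO_{\CC_\iota}}{X}$, with $x$ mapping to the variable $X$. Passing to rigid-analytic generic fibres, the preimage of $D(0,1,\CC_\iota)$ under $x^\iota$ splits as a disjoint union of $\ell$ open unit discs $U_{\iota,1},\dotsc,U_{\iota,\ell}$, each containing $s_k^\iota$ and mapped isomorphically onto $D(0,1,\CC_\iota)$ by $x^\iota$. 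This yields (ii)--(iv) at such places.

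For each of the finitely many remaining embeddings (archimedean, or corresponding to some $v\in\Xi$), I would proceed by direct local analytic theory: the algebraic isomorphism $\widehat{\cO}_{C_0,s_k} \cong \powerseries{K}{X}$ induced by $x$ gives, via the formal inverse of $x$, an analytic identification of a small neighbourhood of $s_k^\iota$ with a disc in $\CC_\iota$ centred at $0$. By first choosing these neighbourhoods pairwise disjoint for $k=1,\dotsc,\ell$ and then shrinking to a common radius $r_\iota > 0$, one obtains (ii)--(iv). Property (i) holds because only finitely many embeddings need this treatment. Property (v) then follows because, in the coordinate $x^\iota$, both $f^\iota$ and $\hat f^\iota \circ x^\iota$ are analytic on the intersection of $U_{\iota,k}$ with $\{s : \abs{x^\iota(s)}<R(\hat f^\iota)\}$ and have identical Taylor series at $s_k^\iota$, so they coincide by uniqueness of Taylor expansion at the centre of a non-archimedean disc (or by analytic continuation in the archimedean case).

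The main obstacle I expect is the verification at places $v\notin\Xi$ that the preimage of $D(0,1)$ under $x^\iota$ decomposes as exactly $\ell$ residue discs, with no ``extra'' components approaching the boundary. This is where the étaleness of the zero locus in (b) is essential: it ensures that the formal completion of $\mathfrak{C}_0$ along $\{x=0\}$ is a disjoint union of $\ell$ formal open discs rather than a more complicated formal scheme, which is precisely what one needs to guarantee $r_\iota \geq 1$ uniformly.
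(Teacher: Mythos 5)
There is a genuine gap, and it sits exactly at the point you flag as ``the main obstacle'': the exhaustion part of property (iii). Your Hensel/formal-completion argument shows that the tube of each zero section of $\mathfrak{C}_0$ is an open unit disc mapped isomorphically onto $D(0,1,\CC_\iota)$ by $x^\iota$; that is, it describes the set of points of $C_0(\CC_\iota)$ which \emph{specialise into the zero locus} of the model. But property (iii) asserts that these $\ell$ discs are the \emph{entire} preimage of $D(0,1,\CC_\iota)$ in $C_0^\iota(\CC_\iota)$, and this does not follow from the structure of the formal completion along the sections. Since $C_0$ is in general affine (in the paper it is a punctured projective curve), the model $\mathfrak{C}_0$ is not proper, neither over $\cO_{K,\Xi}$ nor over $\PP^1_{\cO_{K,\Xi}}$ via $x$, so one cannot invoke the valuative criterion: a point $s \in C_0(\CC_\iota)$ with $\abs{x^\iota(s)}<1$ need not extend to an $\cO_{\CC_\iota}$-point of $\mathfrak{C}_0$ at all, and for such a point the formal/étale local analysis is silent. \'Etaleness of the zero locus in your condition (b) controls the tubes; it does not rule out points with small $\abs{x^\iota}$ lying outside every tube.

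The statement you want is true, but proving it needs a further global input, which is precisely what the paper supplies. One fix is a counting argument as in the paper's proof: $x$ has degree $\ell$ on the projective model, and you have already exhibited $\ell$ distinct preimages of each $z \in D(0,1,\CC_\iota)$ (one in each tube), so there can be no others in $C_0(\CC_\iota) \subset \ov{C_0}(\CC_\iota)$. Alternatively, spread out a smooth \emph{proper} model of the projective closure $\ov{C_0}$, use properness to specialise any point with $\abs{x^\iota(s)}<1$ into the zero locus of the special fibre, and use the hypothesis that all $\ell$ zeros of $x$ already lie in $C_0$ to ensure (for almost all places) that this zero locus is exactly the reduction of $s_1,\dotsc,s_\ell$ and that the boundary points of $\ov{C_0}\setminus C_0$ reduce away from it, so the resulting residue discs lie inside $C_0(\CC_\iota)$. (The same exhaustion issue is glossed over at the finitely many remaining embeddings, where ``shrinking to a common radius'' gives (ii), (iv) and disjointness but not that the union is the full preimage; there it is repaired by a routine compactness argument on $\ov{C_0}$, again using that no zero of $x$ lies on the boundary.) With such an ingredient added, your integral-model route is a legitimate and more conceptual alternative to the paper's explicit power-series argument via the coordinates $y_j$, the quantities $R^\dag$, Eisenstein's theorem and the degree count; your sketch of (v) is serviceable once (iii)--(iv) are in place.
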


\begin{proof}
For an archimedean embedding $\iota$, the existence of $r_\iota$ and $U_{\iota,1}, \dotsc, U_{\iota,\ell}$ satisfying (ii)--(v) is classical.
We can ignore finitely many embeddings in~(i), so for the rest of this proof we consider only non-archimedean embeddings.

Let $\ov{C_0}$ be the smooth projective model of $C_0$.
Embed $\ov{C_0}$ in some projective space $\PP^m$.
After applying a linear automorphism of $\PP^m$, we may assume that $s_1, \dotsc, s_\ell \in \AAA^m$ (regarded as an open subset of $\PP^m$).
Let $C_0' = C_0 \cap \AAA^m$.
Let $y_1, \dotsc, y_m \colon C_0' \to \AAA^1$ denote the coordinate functions.

Let $Z$ denote the finite set of $\bar K$-points of $(\ov{C_0} \cap \AAA^m) \setminus C_0'$.

Let $P, Q \in K[Y_1, \dotsc, Y_m]$ be polynomials such that
\begin{equation} \label{eqn:x-PQ}
x = \frac{P(y_1, \dotsc, y_m)}{Q(y_1, \dotsc, y_m)}.
\end{equation}
Let $\eta_{k,j} \in \powerseries{K}{X}$ denote the Taylor series of $y_j$ around $s_k$ in terms of the local parameter $x$.

\medskip

For each non-archimedean embedding~$\iota$, let
\begin{align*}
    d_\iota
  & = \min \bigl( \{ 1 \}
\\&   \phantom{= \min\bigl(} \cup \{ \abs{y_j(s_k)^\iota-y_j(s_{k'})^\iota} : 1 \leq k, k' \leq \ell, \, 1 \leq j \leq m, \, y_j(s_k) \neq y_j(s_{k'}) \}
\\&   \phantom{= \min\bigl(} \cup \{ \abs{y_j(s_k)^\iota-y_j(t)^\iota} : 1 \leq k \leq \ell, \, 1 \leq j \leq m, \, t \in Z, \, y_j(s_k) \neq y_j(t) \} \bigr),
\\  r_\iota 
  & = \min \{ R^\dag(\eta_{k,j}^\iota) d_\iota : 1 \leq k \leq \ell, \, 1 \leq j \leq m \}.
\end{align*}
For each $j,k,k'$, $y_j(s_k)-y_j(s_{k'}) \in \Qbar$, so if $y_j(s_k) \neq y_j(s_{k'})$, then $\abs{y_j(s_k)^\iota-y_j(s_{k'})^\iota} > 0$ for all~$\iota$ and $\abs{y_j(s_k)^\iota-y_j(s_{k'})^\iota} \geq 1$ for almost all~$\iota$.
A similar argument applies to $y_j(s_k)-y_j(t)$ for $t \in Z$.
Hence $d_\iota > 0$ for all $\iota$ and $d_\iota \geq 1$ for almost all~$\iota$.

The field of rational functions $K(C_0')$ has transcendence degree~$1$, so each $y_j$ is algebraic over $K(x)$.
It follows that the power series $\eta_{k,j}$ are algebraic (over $K(X)$).
Hence, by a theorem of Eisenstein \cite[\S 84]{Die57}, we have $R^\dag(\eta_{k,j}^\iota) > 0$ for all $\iota$ and $R^\dag(\eta_{k,j}^\iota) \geq 1$ for almost all~$\iota$.
Thus $r_\iota > 0$ for all $\iota$ and $r_\iota \geq 1$ for almost all~$\iota$, proving~(i).

Let
\[ U_{\iota,k} = \{ s \in (\ov C_0 \cap \AAA^m)^\iota(\CC_\iota) : \abs{x^\iota(s)} < r_\iota, \abs{y_j^\iota(s)-y_j(s_k)^\iota} < d_\iota \text{ for all } j = 1, \dotsc, m \}. \]
Clearly $s_k^\iota \in U_{\iota,k}$, so (ii) is satisfied.

For each $t \in Z$ and each~$k$, there is some $j$ such that $y_j(s_k) \neq y_j(t)$.
Consequently, $t^\iota \not\in U_{\iota,k}$.
Thus $U_{\iota,k} \subset (C_0')^\iota(\CC_\iota) \subset C_0^\iota(\CC_\iota)$.

\medskip

Next we prove (iii) and~(iv).
Let $\underline\eta_k^\iota = (\eta_{k,1}^\iota, \dotsc, \eta_{k,n}^\iota) \colon D(0,r_\iota,\CC_\iota) \to \AAA^m(\CC_\iota)$.
We shall show that $\underline\eta_k^\iota$ is the inverse of $x^\iota|_{U_{\iota,k}}$.

Firstly, suppose that $s \in U_{\iota,k} \cap U_{\iota,k'}$ where $k \neq k'$.
There exists some~$j$ such that $y_j(s_k) \neq y_j(s_{k'})$.
Then from the definitions of $U_{\iota,k}$ and $d_\iota$, we have
\begin{gather*}
   \abs{y_j^\iota(s)-y_j(s_k)^\iota} < d_\iota \leq \abs{y_j(s_k)^\iota-y_j(s_{k'})^\iota},
\\ \abs{y_j^\iota(s)-y_j(s_{k'})^\iota} < d_\iota \leq \abs{y_j(s_k)^\iota-y_j(s_{k'})^\iota}.
\end{gather*}
This contradicts the non-archimedean triangle inequality.
Thus $U_{\iota,1}, \dotsc, U_{\iota,\ell}$ are pairwise disjoint.

Consider $z \in D(0,r_\iota,\CC_\iota)$.
For every $f$ in the ideal of $\mathcal{I}(\ov C_0 \cap \AAA^m) \subset K[Y_1, \dotsc, Y_m]$, we have $f(y_1, \dotsc, y_m)=0$ in $K[C_0]$, hence $f(\eta_{k,1}, \dotsc, \eta_{k,m}) = 0$ in $\powerseries{K}{X}$, hence $f^\iota(\eta_{k,1}^\iota(z), \dotsc, \eta_{k,m}^\iota(z)) = 0$ in $\CC_\iota$ (because taking Taylor series and evaluating $\iota$-adically are both $K$-algebra homomorphisms).
Thus, $\underline\eta_k^\iota(z) \in (\ov C_0 \cap \AAA^m)^\iota(\CC_\iota)$.
A similar argument shows that
\[ X Q(\eta_{k,1}, \dotsc, \eta_{k,m}) = P(\eta_{k,1}, \dotsc, \eta_{k,m}) \text{ in } \powerseries{K}{X} \]
so
\[ z Q^\iota(\eta_{k,1}^\iota(z), \dotsc, \eta_{k,m}^\iota(z))
   = P^\iota(\eta_{k,1}^\iota(z), \dotsc, \eta_{k,m}^\iota(z)). \]
Hence,
\begin{equation} \label{eqn:x-eta}
x^\iota(\underline\eta_k^\iota(z)) = \frac{P^\iota(\underline\eta_k^\iota(z))}{Q^\iota(\underline\eta_k^\iota(z))} = z.
\end{equation}

Since $\abs{z} < r_\iota \leq R^\dag(\eta_{k,j}^\iota)$, using \cref{padic-Rdag-bound}, we have
\[ \abs{y_j^\iota(\underline\eta_k^\iota(z)) - y_j(s_k)^\iota}
   = \abs{\eta_{k,j}^\iota(z) - \eta_{k,j}^\iota(0)}
   \leq R^\dag(\eta_{k,j}^\iota)^{-1} \abs{z}
   < R^\dag(\eta_{k,j}^\iota)r_\iota
   \leq d_\iota \]
for all $j$.
Thus $\underline\eta_k^\iota(z) \in U_{\iota,k}$.

By \eqref{eqn:x-eta}, the points $\underline\eta_1^\iota(z) \in U_{\iota,1}$, \ldots, $\underline\eta_\ell^\iota(z) \in U_{\iota,\ell}$ are preimages in $C_0^\iota(\CC_\iota)$ of $z$ under $x^\iota$.
These points are distinct because $U_{\iota,1}, \dotsc, U_{\iota,\ell}$ are pairwise disjoint.
Since $\deg(x)=\ell$, these are the only preimages of $z$ under $x^\iota$ in $C_0^\iota(\CC_\iota)$.

This establishes that $U_{\iota,1} \cup \dotsb \cup U_{\iota,\ell} = (x^\iota)^{-1}(D(0,r_\iota,\CC_\iota))$ in $C_0^\iota(\CC_\iota)$, and that each $z \in D(0,r_\iota,\CC_\iota)$ has at most one preimage in each $U_{\iota,k}$ under $x^\iota$.
Thanks to \eqref{eqn:x-eta}, this implies that $x^\iota|_{U_{\iota,k}}$ is injective.
It is immediate from \eqref{eqn:x-eta} that $x^\iota|_{U_{\iota,k}}$ is surjective onto $D(0,r_\iota,\CC_\iota)$.
This completes the proof of (iii) and~(iv).

\medskip

Let $f \in K(C_0)$ be a rational function which is regular at $s_k$.
Let $\hat f \in \powerseries{K}{X}$ be the Taylor series of $f$ around $s_k$ in terms of~$x$.
Write $f = R/S$, where $R, S \in K[Y_1, \dotsc, Y_m]$ and $S^\iota$ has no zeros in $U_{\iota,k}$ except at poles of~$f^\iota$.

Suppose that $s \in U_{\iota,k}$ with $\abs{x^\iota(s)} < R(\hat f^\iota)$.
Let $z = x^\iota(s)$.
By the same argument as for \eqref{eqn:x-eta}, we obtain
\begin{equation} \label{eqn:hat-f}
\hat f^\iota(z) = \frac{R^\iota(\eta_{k,1}^\iota(z), \dotsc, \eta_{k,m}^\iota(z))}{S^\iota(\eta_{k,1}^\iota(z), \dotsc, \eta_{k,m}^\iota(z))}.
\end{equation}
Since $x^\iota$ restricts to a bijection $U_{\iota,k} \to D(0, r_\iota, \CC_\iota)$ with inverse $\underline\eta_k^\iota$, we have $\underline\eta_k^\iota(z) = s$.
Substituting this in \eqref{eqn:hat-f} completes the proof of~(v).
\end{proof}

\begin{remark} \label{move-to-U1}
We apply \cref{padic-local-isoms} to the curve~$C_0$ defined at the beginning of section~\ref{subsec:semiab}.
Then, if $s \in C_0^\iota(\CC_\iota)$ and $\abs{x^\iota(s)} < r_\iota$, then there exists $k$ such that $(\sigma_k^\iota)^{-1}(s) \in U_{\iota,1}$.
Indeed, by \cref{padic-local-isoms}(iv), there is a unique point $s' \in U_{\iota,1}$ such that $x^\iota(s') = x^\iota(s)$.
By \cref{curve-ramification}(iii), $x \colon C_4 \to \PP^1$ is Galois, so there exists $\sigma \in \Aut_x(C_4)$ such that $\sigma^\iota(s') = s$.
By \cref{sigma-ell}, $\sigma=\sigma_k$ for some~$k$.
\end{remark}

\subsection{Power series}

We now define power series $F_\lambda, G_\lambda \in \powerseries{K}{X}$ for each $\lambda \in \Lambda$.
As we will show in the next subsection, the archimedean evaluations of these power series can be interpreted as locally invariant periods of $\cA_\lambda$ near~$s_1$.
These will be the G-functions to which we apply Bombieri's theorem.

Let $\xi_\lambda$ denote the Taylor series of $\rho_\lambda \colon C_0 \to S \subset \AAA^1$ around $s_1$ in terms of the local parameter~$x$.
(Note that, since $x\circ \sigma_{k'} = x$, this is the same as the Taylor series of $\rho_i$ around $s_k$ in terms of~$x$, where $\lambda = (k,i)$.)
These are power series in $\powerseries{K}{X}$ and, since $\rho_\lambda(s_1) = \rho_i(s_k)=0$, they have no constant term.

Shrinking $r_\iota$ and $U_{\iota,1}$ if necessary, we may assume that
\[ r_\iota \leq R^\dag(\xi_\lambda^\iota) \]
for all non-archimedean $\iota$ and all $(k, i) \in \Lambda$.
Note that the power series $\xi_\lambda$ are algebraic, hence globally bounded.  Hence, we may perform this shrinking while preserving the property that
\begin{equation} \label{eqn:riota}
r_\iota > 0 \text{ for all } \iota \text{ and } r_\iota \geq 1 \text{ for almost all } \iota.
\end{equation}
Similarly, for archimedean~$\iota$, shrinking $r_\iota$ and $U_{\iota,1}$ if necessary, we may assume that
\[ r_\iota \leq R(\xi_\lambda^\iota) \]
and that $\xi_\lambda^\iota$ maps $D(0, r_\iota, \CC)$ into the disc $\Delta_S$ from Section~\ref{sec:families}.

\medskip

Let $F$ and~$G$ denote the power series defined in Section~\ref{sec:families}.
For each $\lambda \in \Lambda$, let
\begin{equation} \label{eqn:FG-li}
F_\lambda = F \circ \xi_\lambda, \; G_\lambda = G \circ \xi_\lambda \; \in \powerseries{K}{X}.
\end{equation}
Since $F \in \powerseries{\ZZ}{X}$, $R(F^\iota) \geq 1$ for all non-archimedean embeddings~$\iota$ of~$K$.
Hence, by \cref{padic-composition},
\[ R(F_\lambda^\iota) \geq R^\dag(\xi_\lambda^\iota) \geq r_\iota. \]
Using \cref{padic-composition} together with \cref{padic-local-isoms}(v), the following diagram of $\iota$-adic evaluations commutes:
\begin{equation} \label{eqn:F-xi-diagram}
\begin{tikzcd}
    {U_{\iota,1}}                  \arrow[rd, "\rho_\lambda^\iota"]
                                   \arrow[d, "x^\iota", swap]
\\  {D(0, r_\iota, \CC_\iota)}     \arrow[r, "{\xi_\lambda^\iota}"]
                                   \arrow[rr, "{F_\lambda^\iota}", bend right=20]
  & {D(0, R(F^\iota), \CC_\iota)}  \arrow[r, "F^\iota"]
  & \PP^1(\CC_\iota)
\end{tikzcd}
\end{equation}
We have not estimated $R(G^\iota)$ for non-archimedean~$\iota$, so we do not know whether this diagram makes sense with $G$ in place of~$F$.  This is not a problem because we will not need to reason about non-archimedean evaluations of $G_\lambda$.

For archimedean embeddings~$\iota$, $\xi_\lambda^\iota$ maps $D(0, r_\iota, \CC_\iota)$ to $\Delta_S$, which is contained in $D(0, R(F^\iota), \CC_\iota)$ and in $D(0, R(F^\iota), \CC_\iota)$.  Hence, all evaluations of power series in the above diagram make sense.  Over $\CC$, this is sufficient to guarantee that the diagram commutes, as well as the analogous diagram for~$G$.

\subsection{Periods and G-functions} \label{subsec:periods-g-functions}

Let $\lambda = (k,i) \in \Lambda$ and let $\iota$ be an archimedean embedding of $\hat K$.
We shall interpret the $\iota$-adic evaluation of the power series $F_\lambda$ and $G_\lambda$ as periods of the semiabelian scheme $\cA_\lambda^\iota$ near~$s_1$.
This interpretation has two applications: showing that $F_\lambda$ and $G_\lambda$ are G-functions, and determining the functional relations between them.

Write $\pi_\lambda \colon \cA_\lambda \to C_0$ for the structure morphism of the semiabelian scheme $\cA_\lambda$.
Let $\theta_\lambda$ be the morphism which completes the following fibre product diagram:
\begin{equation} \label{eqn:theta-lambda-diagram}
\begin{tikzcd}
    \cA_\lambda  \arrow[d, "\pi_\lambda"] \arrow[r, "\theta_\lambda"]
  & \cE          \arrow[d, "\pi"]
\\  C_0          \arrow[r, "\rho_\lambda|_{C_0}"]
  & S
\end{tikzcd}
\end{equation}

In Section~\ref{subsec:periods-defs}, we defined a non-zero locally invariant cycle $\gamma \in R_1 \pi^\iota_*(\bQ)(\Delta_S)$.
For each $s \in C_0^{\iota*}(\CC)$, $\theta^\iota_{\lambda,s}$ is an isomorphism $\cA^\iota_{\lambda,s} \to \cE^\iota_{\rho^\iota_\lambda(s)}$ and hence induces an isomorphism $\theta^\iota_{\lambda,s*} \colon H_1(\cA^\iota_{\lambda,s}, \QQ) \to H_1(\cE^\iota_{\rho_\lambda^\iota(s)}, \QQ)$.
Consequently, for each $s \in U_{\iota,1}$, there is a unique $\gamma_{\iota,\lambda,s} \in H_1(\cA^\iota_{\lambda,s}, \QQ)$ such that
\[ \theta^\iota_{\lambda,s*}(\gamma_{\iota,\lambda,s}) = \gamma_{\rho^\iota_\lambda(s)}. \]
These cycles $\gamma_{\iota,\lambda,s}$ are locally constant as $s$ varies in $U_{\iota,1}^*$.
Since $\gamma$ is a section in $\pi^\iota_*(\bQ)(\Delta_S^*)$, it follows that $\gamma_{\iota,\lambda,s}$ are the values of a section in $R_1 \pi^\iota_{\lambda*}(\QQ)(U_{\iota,1}^*)$.
By \cite[IX, 4.3, (4.3.2)]{And89}, this extends uniquely to a section $\gamma_{\iota,\lambda} \in R_1 \pi^\iota_{\lambda*}(\QQ)(U_{\iota,1})$.


Let $[\omega], \eta \in H^1_{DR}(\cE|_{S^*}/S^*)$ denote the de Rham cohomology classes defined in sections \ref{subsec:j-family} and~\ref{subsec:periods-defs}, respectively.
Base-changing these via the fibre product diagram~\eqref{eqn:theta-lambda-diagram}, we obtain
\[ \omega_\lambda = \rho_\lambda^* [\omega], \; \eta_\lambda = \rho_\lambda^* \eta \; \in H^1_{DR}(\cA_\lambda/C_0). \]

From \eqref{eqn:arch-F-integral} and \eqref{eqn:F-xi-diagram}, for every $s \in U_{\iota,1}$,
\begin{equation} \label{eqn:arch-Fli-integral}
   \frac{1}{2\pi i} \int_{\gamma_{\iota,\lambda,s}} \omega_{\lambda,s}^\iota
 = \frac{1}{2\pi i} \int_{\gamma_{\rho_\lambda^\iota(s)}} \omega^\iota_{\rho_\lambda^\iota(s)}
 = F^\iota(\rho_\lambda^\iota(s))
 = F_\lambda^\iota(x^\iota(s))
\end{equation}
and, similarly,
\[ \frac{1}{2\pi i} \int_{\gamma_{\iota,\lambda,s}} \eta_{\lambda,s}^\iota = G_\lambda^\iota(x^\iota(s)). \]

\begin{lemma} \label{G-functions}
$F_\lambda$ and $G_\lambda$ are G-functions for all $\lambda \in \Lambda$.
\end{lemma}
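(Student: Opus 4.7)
The plan is to verify the four defining conditions of a G-function for $F_\lambda$ and $G_\lambda$, by reducing to the fact that $F$, $G$, and $\xi_\lambda$ are individually G-functions and then applying the closure of this property under composition.

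First, I would establish that $F$ and $G$ are G-functions. Condition~(1) is automatic since $F \in \powerseries{\ZZ}{X}$ and $G \in \powerseries{\QQ}{X}$. Conditions~(2) and~(3) hold trivially for $F$ (integer coefficients) and transfer to $G$ via the explicit relation $G = aF + b\,dF/dX$ with $a,b \in \QQ(X)$ established in the proof that $G \in \powerseries{\QQ}{X}$. For condition~(4), both $F$ and $G$ are annihilated by the classical Picard--Fuchs operator of the $1/j$-family $\cE/S$, a second-order linear differential operator with coefficients in $\QQ(X)$; this is the standard hypergeometric equation governing periods of elliptic curves parametrised by the $j$-line.

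Next, $\xi_\lambda \in \powerseries{K}{X}$ is the Taylor series at $s_1$ of the rational function $\rho_\lambda \in K(C_0)$ in the local parameter $x$, hence is algebraic over $K(X)$ and has no constant term (since $\rho_\lambda(s_1)=0$). Eisenstein's theorem then gives conditions~(2) and~(3) for $\xi_\lambda$, while condition~(4) holds because any algebraic power series satisfies a linear ODE over $K(X)$ (e.g.\ differentiate its minimal polynomial). So $\xi_\lambda$ is a G-function.

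Finally, I would invoke the standard closure result (see \cite[IV]{And89}) that if $y \in \powerseries{\Qbar}{X}$ is a G-function and $\xi \in \powerseries{\Qbar}{X}$ is an algebraic G-function with no constant term, then $y \circ \xi$ is a G-function. Applied to $F \circ \xi_\lambda$ and $G \circ \xi_\lambda$, this immediately yields the lemma. The main technical point is the differential equation for $F_\lambda$ and $G_\lambda$: this can alternatively be obtained geometrically by pulling back the Gauss--Manin connection on $H^1_{DR}(\cA_\lambda/C_0)$ to the finite algebraic extension $K(x) \subset K(C_4)$, which produces a finite-rank D-module over $K(X)$ annihilating the period functions via the identity \eqref{eqn:arch-Fli-integral}. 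The arithmetic growth conditions~(2) and~(3) for the composite series are then straightforward to check directly from the coefficientwise expansion, using that $\xi_\lambda$ is globally bounded.
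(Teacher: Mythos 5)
Your proposal is essentially correct, but it follows a genuinely different route from the paper. The paper does not prove that $F$ and $G$ themselves are G-functions; instead it applies Andr\'e's degeneration theorem \cite[X, 4.2, Theorem~1]{And89} directly to the semiabelian scheme $\cA_\lambda/C_0$, which has multiplicative reduction at $s_1$, to conclude that the Taylor expansions in the local parameter $x$ of the locally invariant periods (in some de Rham basis $\omega',\eta'$) are G-functions, and then uses \cite[Theorem~D]{And89} to pass to the $K(X)$-linear combinations giving $F_\lambda$, $G_\lambda$. You instead prove the statement for the base family $\cE/S$ in the parameter $s=1/j$ (explicit integrality of $F$, Picard--Fuchs-type ODE, $G=aF+b\,dF/dX$) and then compose with the algebraic series $\xi_\lambda$, invoking stability of G-functions under substitution of a globally bounded algebraic series vanishing at the origin. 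What your approach buys is that it only needs classical facts about the $1/j$-family plus Eisenstein's theorem for $\xi_\lambda$ (both already present in the paper), at the price of needing the composition-stability lemma, whose attribution to ``\cite[IV]{And89}'' is vague -- though your sketch (holonomicity is preserved under algebraic substitution, and growth/denominator bounds follow coefficientwise from global boundedness of $\xi_\lambda$) is adequate. Two small imprecisions worth fixing: condition~(2) for $F$ is not ``trivial from integer coefficients'' -- integrality gives nothing archimedean -- but follows from the positive archimedean radius of convergence of $F$ established in Section~\ref{subsec:periods-defs}; and $G$ is not literally annihilated by the same second-order operator as $F$ (it is the period of a different de Rham class), but since $G$ lies in the $\QQ(X)$-span of $F$ and $dF/dX$, which is stable under $d/dX$ modulo the ODE for $F$, it satisfies its own second-order equation over $\QQ(X)$, which is all condition~(4) requires.
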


\begin{proof}
Since $\pi_\lambda|_{C_0^*}$ is an abelian scheme with multiplicative reduction at $s_1$, we can apply  \cite[X, 4.2, Theorem~1]{And89} to deduce that there is some basis $\omega', \eta'$ for $H^1_{DR}(\cA_\lambda/C_0)$ for which the Taylor expansions in $x$ of the locally invariant periods $\frac{1}{2\pi i} \int_{\gamma_{\iota,\lambda,s}} \omega_{\lambda,s}^{\prime\iota}$, $\frac{1}{2\pi i} \int_{\gamma_{\iota,\lambda,s}} \eta_{\lambda,s}^{\prime\iota}$ are G-functions.
Now $[\omega]$ and $\eta$ are $K(C_0)$-linear combinations of $\omega'$ and $\eta'$.
Hence $F_\lambda$ and $G_\lambda$ are $K(X)$-linear combinations of G-functions.
Thus, by \cite[Theorem~D]{And89}, $F_\lambda$ and $G_\lambda$ are G-functions themselves.
\end{proof}

\subsection{Global relations} \label{subsec:global-relations}

Let $\mathcal{G}$ denote the set of G-functions defined in~\eqref{eqn:FG-li}:
\begin{align*}
\mathcal{G} = \{ F_\lambda,G_\lambda : \lambda \in \Lambda \}.
\end{align*}

In order to control the radii within which global relations have to hold, we shall add an additional G-function~$H$ to our set.
Choose $\zeta\in K^\times$ such that $\abs{\zeta^\iota} \leq r_\iota$ for all embeddings $\iota$ of~$K$ at which $r_\iota < 1$.
(By \eqref{eqn:riota}, we have only constrained $\abs{\zeta^\iota}$ at finitely many embeddings, so such a $\zeta$ exists.)
Let $H$ denote the G-function $\zeta/(\zeta-X) = \sum_{k=0}^\infty(X/\zeta)^k$ and let $\mathcal{G}'=\mathcal{G}\cup\{H\}$.
This additional $G$-function~$H$ will not appear directly in our relations, but because it reduces the number of embeddings $\hat\iota$ at which a given parameter value~$z$ satisfies the radius bound, some polynomials in $\hat K[F_\lambda, G_\lambda]$ may be global relations between the elements of $\mathcal{G}'$ at a given point without being global relations between the elements of $\mathcal{G}$ at that point.

Let $R'_\iota = \min \{ 1, R(H^\iota), R(F_\lambda^\iota), R(G_\lambda^\iota) : \lambda \in \Lambda \}$.
Thus, for every embedding~$\iota$,
\[ R'_\iota \leq \min(1, R(H^\iota)) = \min(1, \abs{\zeta^\iota}) \leq r_\iota. \]

\begin{lemma} \label{global-relation-applied}
Let $M$, $N$ be positive integers.
Let $s \in C_0^*(\ov\bQ)$ be a point such that there exist $i_1, i_2, i_3, i_4 \in \{ 1, \dotsc, n \}$ such that $i_1 \neq i_2$, $i_3 \neq i_4$, $\{i_1,i_2\} \neq \{i_3,i_4\}$ and:
\begin{enumerate}[(i)]
\item there exists an isogeny $\cA_{i_1,s} \to \cA_{i_2,s}$ of degree~$M$;
\item there exists an isogeny $\cA_{i_3,s} \to \cA_{i_4,s}$ of degree~$N$.
\end{enumerate}
Then there exists a non-zero global relation between the evaluations of the G-functions $\mathcal{G}'$ at $x(s)$ of degree at most $288\ell[K(s):\QQ] + 2\ell d(MT^2)$, where $T$ is defined at the end of section~\ref{subsec:semiab}.
This relation does not involve~$H$.
\end{lemma}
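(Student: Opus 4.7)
The plan is to apply \cref{global-relation} once for each $k = 1, \dotsc, \ell$, corresponding to the degeneration point to which $s$ might be ``close'' under various embeddings, and then multiply the resulting polynomials into a single relation in the G-functions $F_\lambda, G_\lambda$.

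First I would enlarge $\hat K \supseteq K(s)$ by a bounded amount to ensure both isogenies are defined over~$\hat K$. For each $k = 1, \dotsc, \ell$, set $s_k' = \sigma_k^{-1}(s)$, noting $x(s_k') = x(s)$ since $x \circ \sigma_k = x$. Using the generic isogenies between $\cA_{[k,i_j],\bar\eta}$ and $\sigma_k^* \cA_{i_j,\bar\eta}$ of degree dividing~$T$, together with \cref{isogeny-specialisation}, the given isogenies at $s$ translate (via composition with duals) into isogenies $\cA_{[k,i_1],s_k'} \to \cA_{[k,i_2],s_k'}$ of degree dividing $MT^2$ and $\cA_{[k,i_3],s_k'} \to \cA_{[k,i_4],s_k'}$ of degree dividing $NT^2$. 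Since $\cA_\lambda = \rho_\lambda^* \cE$, these are isogenies between fibres of the $1/j$-family at the four points $\rho_{[k,i_j]}(s_k') \in S^*(\hat K)$, so \cref{global-relation} yields polynomials $P_{\infty,k}$ and $P_{fin,k}$ of degrees at most $2[\hat K:\mathbb{Q}]$ and $2d(MT^2)$, respectively.

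Next, I would substitute $Y_j \mapsto Y_{[k,i_j]}$ and $Z_j \mapsto Z_{[k,i_j]}$ in these polynomials, obtaining $Q_{\infty,k}, Q_{fin,k}$ in the $\Lambda$-indexed variables, and form $Q = \prod_{k} Q_{\infty,k} \cdot \prod_{k} Q_{fin,k}$. To verify $Q$ is a global relation at $x(s)$ for $\mathcal{G}'$: for any embedding $\hat\iota$ of $\hat K$ with $|x(s)^{\hat\iota}| < R'_{\hat\iota} \leq r_{\hat\iota}$, \cref{move-to-U1} provides a $k$ with $\sigma_k^{-1}(s)^{\hat\iota} \in U_{\hat\iota,1}$; then diagram~\eqref{eqn:F-xi-diagram} gives
\[ F_{[k,i_j]}^{\hat\iota}(x(s)^{\hat\iota}) = F^{\hat\iota}(\rho_{[k,i_j]}(s_k')^{\hat\iota}), \]
and similarly for $G$. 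The choice of $r_{\hat\iota}$ ensures $\xi_\lambda^{\hat\iota}$ carries $D(0, r_{\hat\iota}, \CC_{\hat\iota})$ into $\Delta_S$ in the archimedean case, and (via \cref{padic-Rdag-bound}) into the open unit disc in the non-archimedean case, so the convergence hypotheses of \cref{global-relation} are met and the corresponding factor $P_{\infty,k}$ or $P_{fin,k}$ vanishes, hence so does $Q$. By construction, $Q$ involves only $F_\lambda, G_\lambda$, never~$H$.

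The main obstacle is showing $Q \neq 0$. Each $Q_{fin,k}$ is non-zero because $[k,i_1] \neq [k,i_2]$ by \cref{no-isogenies-same-l}. For $Q_{\infty,k}$: by \cref{no-isogenies-same-l}, a collapse $[k,i_j] = [k,i_{j'}]$ under the substitution forces $i_j = i_{j'}$; the hypotheses $i_1 \neq i_2$, $i_3 \neq i_4$, $\{i_1,i_2\} \neq \{i_3,i_4\}$ permit at most one such coincidence between $\{i_1,i_2\}$ and $\{i_3,i_4\}$, which after a suitable relabeling of the two isogenies can be put into the form $i_1 = i_3$. Non-vanishing then follows from the explicit conclusion $P_{\infty,k} \not\in \langle Y_1-Y_3, Z_1-Z_3 \rangle$ in \cref{global-relation}. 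Collecting degrees gives $\deg Q \leq 2\ell[\hat K:\mathbb{Q}] + 2\ell d(MT^2) \leq 288\ell[K(s):\mathbb{Q}] + 2\ell d(MT^2)$ once we account for the bounded extension $[\hat K:K(s)] \leq 144$.
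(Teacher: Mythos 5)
Your proposal is correct and follows essentially the same route as the paper's proof: apply \cref{global-relation} once per $k$ to the composite isogenies of degree dividing $MT^2$ and $NT^2$ obtained from \cref{isogeny-specialisation} and the generic degree-$T$ isogenies, multiply the relabelled factors, verify the global-relation property via \cref{move-to-U1}, diagram~\eqref{eqn:F-xi-diagram} and \cref{padic-Rdag-bound}, and rule out vanishing using \cref{no-isogenies-same-l} together with $P_{\infty}\not\in\langle Y_1-Y_3,Z_1-Z_3\rangle$ after reducing WLOG to $i_1=i_3$. The only detail you leave implicit is the source of the bound $[\hat K:K(s)]\leq 144$, which the paper gets from \cite[Lemma~6.1]{MW90} (each isogeny is defined over an extension of degree at most $12$), and this is what makes the explicit constant $288\ell[K(s):\QQ]$ legitimate.
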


\begin{proof}
The proof has two parts: first we construct a polynomial $Q$, then we prove that $Q$ is a global relation.

Let $i_1, i_2, i_3, i_4$ be the indices appearing in the statement of the lemma.
The hypotheses $i_1 \neq i_2$, $i_3 \neq i_4$, $\{i_1,i_2\} \neq \{i_3,i_4\}$ imply that either $i_1,i_2,i_3,i_4$ are distinct, or that one of $i_1,i_2$ is equal to one of $i_3,i_4$ and there are no other equalities between the indices.  In the latter case,
assume without loss of generality that $i_1=i_3$.

The strategy is as follows.
For each embedding $\hat\iota$ of $K(s)$, 
if $\abs{x^\iota(s^{\hat\iota})}$ is small, then for some $k \in \{ 1, \dotsc, \ell \}$, the point $\sigma_k^{-1}(s)^{\hat\iota}$ is close to~$s_1$.
Consequently $\hat\iota$-adic locally invariant periods of the elliptic curves $\cA^{\hat\iota}_{[k,i_j],\sigma_k^{-1}(s)}$ can be interpreted as evaluations of the G-functions $F_{[k,i_j]}$ and $G_{[k,i_j]}$ at $x(s)^{\hat\iota}$.
The isogeny $\cA_{i_1,s} \to \cA_{i_2,s}$ from hypothesis~(i) gives rise to an isogeny $\cA_{[k,i_1],\sigma_k^{-1}(s)} \to \cA_{[k,i_2],\sigma_k^{-1}(s)}$, and similarly for $i_3,i_4$ (after replacing $K(s)$ by a controlled extension $\hat K$).
Hence, via \cref{global-relation}, we get an $\hat\iota$-adic relation between the evaluations of $F_{[k,i_j]}$ and $G_{[k,i_j]}$ at $x(s)$.

In the above outline of the strategy, the index~$k$ depends on the embedding~$\hat\iota$.
Therefore, we need to apply \cref{global-relation} once for each~$k$ and multiply the resulting relations together to obtain a global relation.

\medskip

For each $k = 1, \dotsc, \ell$ and $j=1,2,3,4$, write $z_{kj} = \rho_{[k,i_j]}(\sigma_k^{-1}(s)) \in K(s)$.

By the definition of $\cA_{[k,i_1]}$ and of~$T$, there exists an isogeny $\cA_{[k,i_1],\bar\eta} \to \sigma_k^*\cA_{i_1,\bar\eta}$ of degree dividing~$T$.
By \cref{isogeny-specialisation}, this implies that there exists an isogeny $\cA_{[k,i_1],\sigma_k^{-1}(s),\ov{K}} \to (\sigma_k^* \cA_{i_1})_{\sigma_k^{-1}(s),\ov{K}}$ of the same degree.
Composing with isomorphisms coming from fibre product diagrams (in particular, $\cA_{[k,i_1]} \cong \rho_{[k,i_1]}^*\cE$), we obtain an isogeny
\[ \cE_{z_{k1},\ov{K}}
\cong \cA_{[k,i_1],\sigma_k^{-1}(s),\ov{K}}
\to (\sigma_k^* \cA_{i_1})_{\sigma_k^{-1}(s),\ov{K}}
\cong \cA_{i_1,s,\ov{K}} \]
of degree dividing~$T$.
We similarly obtain an isogeny $\cE_{z_{k2},\ov{K}} \to \cA_{i_2,s,\ov{K}}$ of degree dividing~$T$.
By hypothesis, we have an isogeny $\cA_{i_1,s} \to \cA_{i_2,s}$ of degree~$M$.
We thus obtain a chain of isogenies
\[ \cE_{z_{k1},\ov{K}} \to \cA_{i_1,s,\ov{K}} \to \cA_{i_2,s,\ov{K}} \to \cE_{z_{k2},\ov{K}} \]
whose composition $\phi_{12}$ has degree dividing $MT^2$.

Similarly, we obtain an isogeny $\phi_{34} \colon \cE_{z_{k3},\ov{K}}  \to \cE_{z_{k4},\ov{K}}$ of degree dividing~$NT^2$.

By \cite[Lemma~6.1]{MW90}, $\phi_{12}$ and $\phi_{34}$ are each defined over extensions of $K(s)$ of degree at most~$12$.
Thus we can choose an extension $\hat K/K(s)$ of degree at most $144$ such that $\phi_{12}$ and $\phi_{34}$ are both defined over $\hat K$.
For the remainder of this proof, we shall take this field $\hat K$ as our base field.

Applying \cref{global-relation} to the isogenies $\phi_{12}$ and $\phi_{34}$, we obtain non-zero homogeneous polynomials $P_{\infty,k} \in \hat K[Y_1, Z_1, \dotsc, Y_4, Z_4]$ and $P_{fin,k} \in \hat K[Y_1, Y_2]$
such that
\begin{equation} \label{eqn:relation-at-rho-s:arch}
P_{\infty,k}(F^\iota(z_{k1}^{\hat\iota}), G^\iota(z_{k1}^{\hat\iota}), \dotsc, F^\iota(z_{k4}^{\hat\iota}), G^\iota(z_{k4}^{\hat\iota})) = 0
\end{equation}
for every archimedean embedding $\hat\iota$ of $\hat K$ satisfying $z_{k1}^{\hat\iota}, \dotsc, z_{k4}^{\hat\iota} \in \Delta_S$
and
\begin{equation} \label{eqn:relation-at-rho-s:non-arch}
P_{fin,k}(F^\iota(z_{k1})^{\hat\iota}, F^\iota(z_{k2}^{\hat\iota})) = 0
\end{equation}
at every non-archimedean embedding $\hat\iota$ of $\hat K$ satisfying $\abs{z_{k1}^{\hat\iota}}, \abs{z_{k2}^{\hat\iota}} < 1$.

Define a new homogeneous polynomial $Q \in \hat K[Y_{\lambda}, Z_{\lambda} : \lambda \in \Lambda]$ by
\begin{multline*}
Q(\underline{Y}, \underline{Z})
= \prod_{k=1}^\ell P_{\infty,k}(Y_{[k,i_1]}, Z_{[k,i_1]}, Y_{[k,i_2]}, Z_{[k,i_2]}, Y_{[k,i_3]}, Z_{[k,i_3]}, Y_{[k,i_4]}, Z_{[k,i_4]})
\\ \cdot P_{fin,k}(Y_{[k,i_1]}, Y_{[k,i_2]}).
\end{multline*}

We claim that $Q \neq 0$.
Indeed, if $i_1, i_2, i_3, i_4$ are distinct, then, by \cref{no-isogenies-same-l}, for each~$k$, $[k,i_1]$, $[k,i_2]$, $[k,i_3]$ and $[k,i_4]$ are distinct, so $Q \neq 0$ is immediate from the facts that $P_{\infty,k} \neq 0$ and $P_{fin,k} \neq 0$ for all~$k$.
If $i_1=i_3$ and $i_1,i_2,i_4$ are distinct, then, similarly, for each~$k$, $[k,i_1]=[k,i_3]$ while $[k,i_1]$, $[k,i_2]$ and $[k,i_4]$ are distinct so the facts that $P_{\infty,k} \not\in \langle Y_1-Y_3, Z_1-Z_3 \rangle, P_{fin,k} \neq 0$ suffice to establish that each factor of $Q$ is non-zero.

From \cref{global-relation}, we have $\deg(P_{\infty,k}) \leq 2[\hat K:\QQ]$ and $\deg(P_{fin,k}) \leq d(MT^2)$ for all~$k$.
Hence
\[ \deg(Q) \leq 2\ell[\hat K:\QQ] + 2\ell d(MT^2) \leq 288\ell[K(s):\QQ] + 2\ell d(MT^2). \]

\medskip

It remains to prove that $Q$ is a global relation between the evaluations at~$x(s)$ of the elements of $\mathcal{G}'$.
To that end,
let $\hat{\iota}$ denote an embedding of $\hat{K}$ and let $\iota$ denote its restriction to $K$. Suppose that $\abs{x^\iota(s^{\hat{\iota}})} < R'_\iota$. In particular, $\abs{x^\iota(s^{\hat{\iota}})} < r_\iota$.

By \cref{move-to-U1}, there exists some $k \in \{ 1, \dotsc, \ell \}$ such that $\sigma_k^{-1}(s)^{\hat\iota} \in U_{\iota,1}$.
(Note that this $k$ depends on $\hat\iota$.)

From diagram~\eqref{eqn:F-xi-diagram}, noting that $x \circ \sigma_k^{-1} = x$, we have
\begin{equation} \label{eqn:chase-evaluations}
F^{\iota}(z_{k1}^{\hat\iota})
= F^\iota(\rho_{[k,i_1]}^\iota(\sigma_k^{-1}(s)^{\hat\iota}))
= F_{[k,i_1]}^\iota(x^\iota(\sigma_k^{-1}(s)^{\hat\iota})))
= F_{[k,i_1]}^\iota(x^\iota(s^{\hat{\iota}})).
\end{equation}
Similar equalities hold for $i_2$, $i_3$ and $i_4$ and, at archimedean embeddings, for $G$ in place of~$F$.

If $\hat\iota$ is archimedean, then, since $\sigma_k^{-1}(s)^{\hat\iota}$ and by our choice of $r_\iota$, $\rho_{[k,i_1]}^\iota(s^{\hat\iota}) \in \Delta_S$ and so on for $i_2$, $i_3$, $i_4$.
Hence we can substitute \eqref{eqn:chase-evaluations} (and analogous equations) into \eqref{eqn:relation-at-rho-s:arch} and obtain
\[ P_\infty(F_{[k,i_1]}^\iota(x(s)^{\hat{\iota}}), G_{[k,i_1]}^\iota(x(s)^{\hat{\iota}}), \dotsc, F_{[k,i_4]}^\iota(x(s)^{\hat{\iota}}), G_{[k,i_4]}^\iota(x(s)^{\hat{\iota}})) = 0. \]
If $\hat\iota$ is non-archimedean, then, since $\abs{x^\iota(s^{\hat{\iota}})} < r_\iota \leq R^\dag(\xi_{k,i_1}^\iota)$, by \cref{padic-Rdag-bound}, we have 
\[ \abs{\rho^\iota_{[k,i_1]}(s^{\hat\iota})} = \abs{\xi^\iota_{[k,i_1]}(x^\iota(s^{\hat\iota}))} \leq R^\dag(\xi^\iota_{k,i_1})^{-1} \abs{x^\iota(s^{\hat\iota})} < 1, \]
and similarly for~$i_2$.
Hence, we can substitute \eqref{eqn:chase-evaluations} into \eqref{eqn:relation-at-rho-s:non-arch} and obtain
\[ P_{fin}(F_{[k,i_1]}^\iota(x(s)^{\hat{\iota}}), F_{[k,i_2]}^\iota(x(s)^{\hat{\iota}})) = 0. \]
In either case, the factor of $Q$ indexed by $k$ vanishes at the $\hat\iota$-adic evaluations of $\mathcal{G}'$ at $x(s)$.
Hence, $Q$ itself vanishes at these evaluations.
\end{proof}

\subsection{Functional relations}\label{subsec:trivial}

Our aim in this section is to prove that there are no non-zero (homogeneous) functional relations between the G-functions $\{ F_{\lambda}, G_{\lambda} : \lambda \in \Lambda \} \cup \{ H \}$.
For this, it suffices to consider the evaluations of $F_\lambda$, $G_\lambda$ and~$H$ at a single archimedean embedding~$\iota$.

Let $\cA$ denote the fibre product over $C_0$ of the semiabelian schemes $\cA_\lambda$ for $\lambda \in \Lambda$.
Let $\cA^* = \cA|_{C_0^*}$ and let $\pi_\cA:\mathcal{A}^*\to C_0^*$ denote the structure morphism.
Because the elements of $\Lambda$ lie in distinct classes for the equivalence relation~$\sim$, the factors of $\cA_{\bar\eta}$ are pairwise non-isogenous elliptic curves, so the generic Mumford--Tate group of $\cA^{*\iota}$ is
\begin{align*}
    \left\{(g_\lambda)\in\gGL_2^\Lambda:\det(g_\lambda) \text{ is the same for all } \lambda \in \Lambda \right\}.
\end{align*}

Let $V$ denote a simply connected open subset of $U_{\iota,1}^* \subset C_0^{*\iota}(\CC)$.
The local system $R_1\pi_{\lambda*}^\iota(\QQ)|_V$ is trivial and has rank~$2$, so we may choose a section $\delta_{\iota,\lambda} \in R_1\pi_{\lambda*}^\iota(\QQ)(V)$ such that $\gamma_{\iota,\lambda}$ and $\delta_{\iota,\lambda}$ form an oriented basis for $R_1\pi_{\lambda*}^\iota(\QQ)|_V$.

We define holomorphic functions on $V$ by
\begin{align*}
    \cP_{\lambda,1,1}(s)=\frac{1}{2\pi i} \int_{\gamma_{\iota,\lambda,s}} \omega^\iota_{\lambda,s}, \quad
  & \cP_{\lambda,1,2}(s)=\frac{1}{2\pi i} \int_{\delta_{\iota,\lambda,s}} \omega^\iota_{\lambda,s},
\\  \cP_{\lambda,2,1}(s)=\frac{1}{2\pi i} \int_{\gamma_{\iota,\lambda,s}} \eta^\iota_{\lambda,s}, \quad
  & \cP_{\lambda,1,2}(s)=\frac{1}{2\pi i} \int_{\delta_{\iota,\lambda,s}} \eta^\iota_{\lambda,s},
\end{align*}
forming the full period matrix of the elliptic curve $\cA_{\lambda,s}^\iota$ with respect to the bases chosen above.

From \eqref{eqn:arch-Fli-integral}, for all $s \in V$, we have
\begin{equation} \label{eqn:Fli-period}
   \cP_{\lambda,1,1}(s)
 = F_\lambda^\iota(x^\iota(s)).
\end{equation}
Similarly
\[ \cP_{\lambda,2,1}(s) = G_\lambda^\iota(x^\iota(s)). \]

Putting together all of these period functions $\cP_{\lambda,\cdot,\cdot}$ yields a holomorphic function $\cP:V\to\gM_2(\CC)^\Lambda$, which gives the (block diagonal) period matrix of $\cA^\iota_s$. We denote its graph in $V\times\gM_2(\CC)^\Lambda$ by $\Gamma$. The following lemma and its proof is inspired by \cite[7.2.1]{Papas}.

\begin{lemma}\label{CZar}
Let $\Theta$ denote the subvariety of $\gM_{2,\CC}$ defined by the equation $\det=(2\pi i)^{-1}$. 
The $\CC$--Zariski closure of $\Gamma$ in $C_0^{*\iota}\times \gM_2^\Lambda$ is $C_0^{*\iota}\times\Theta^\Lambda$.
\end{lemma}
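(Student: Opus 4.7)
The inclusion $\Gamma \subset C_0^{*\iota} \times \Theta^\Lambda$ is immediate: applied to each factor $\cA_\lambda^\iota$, the Legendre period relation \eqref{eqn:legendre-relation} gives $\det \cP_\lambda(s) = 1/(2\pi i)$ for every $s \in V$ and every $\lambda \in \Lambda$, so $\cP(s) \in \Theta^\Lambda$. My plan for the reverse inclusion is to exploit the action of monodromy on the periods, guided by the form of the generic Mumford--Tate group already recorded above. Let $Y$ denote the $\CC$--Zariski closure of $\Gamma$, and $Y_0$ an irreducible component containing $\Gamma$.

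First I would show that for any $s_0 \in V$, the fibre $Y_{0, s_0}$ contains $\cP(s_0) \cdot H$, where $H \subset \gGL_2^\Lambda$ is the algebraic monodromy group of the polarizable VHS $R_1 \pi_{\cA*}^\iota \QQ$. The argument is routine analytic continuation: cover a representing loop for $\sigma \in \pi_1(C_0^{*\iota}, s_0)$ by a chain of simply connected open sets $V = V_0, V_1, \dotsc, V_n$ with $s_0 \in V_0 \cap V_n$, on each of which the period map has an analytic branch $\cP_i$ with $\cP_0 = \cP$ and $\cP_n = \cP \cdot \rho(\sigma)$; since consecutive graphs agree on a nonempty open subset and $Y_0$ is algebraic, the identity principle forces each graph $\Gamma_i = \{(s, \cP_i(s)) : s \in V_i\}$ to lie in $Y_0$ by induction. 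Taking the Zariski closure of the resulting orbit yields $Y_{0, s_0} \supset \cP(s_0) \cdot H$.

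The main work is to identify $H^\circ = \gSL_2^\Lambda$. The generic Mumford--Tate group $\MT$ sits in a short exact sequence $1 \to \gSL_2^\Lambda \to \MT \to \GG_m \to 1$ via the common-determinant map, so $\MT^{\der} = \gSL_2^\Lambda$. By André's Normal Monodromy Theorem \cite{And92}, $H^\circ$ is a normal algebraic subgroup of $\MT^{\der}$. Since $\nu(s_1) = (\infty, \dotsc, \infty)$ and $x \circ \sigma_k = x$, each $\cA_\lambda$ has multiplicative reduction at $s_1$, so in each $\gSL_2$ factor the local monodromy around $s_1$ is a nontrivial unipotent. Thus the projection of $H^\circ$ onto every factor $\gSL_2$ contains a nontrivial unipotent and is all of $\gSL_2$; a connected normal algebraic subgroup of $\gSL_2^\Lambda$ surjecting onto every factor is the whole group, so $H^\circ = \gSL_2^\Lambda$.

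Combining the two steps, $Y_{0, s_0} \supset \cP(s_0) \cdot \gSL_2^\Lambda = \Theta^\Lambda$, where the equality uses $\det \cP_\lambda(s_0) = 1/(2\pi i)$. Hence $\dim Y_0 \geq 1 + 3\abs{\Lambda}$, and since $Y_0 \subset C_0^{*\iota} \times \Theta^\Lambda$ is contained in an irreducible variety of the same dimension, $Y_0$, and therefore $Y$, equals $C_0^{*\iota} \times \Theta^\Lambda$. The hardest ingredient is the identification of $H^\circ$: applying André's theorem to land inside $\MT^{\der}$, and then using the multiplicative degeneration at $s_1$ together with \cref{no-isogenies-same-l} (through the construction of $\Lambda$ via $\sim$) to rule out proper normal subgroups of $\gSL_2^\Lambda$ of diagonal type. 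By comparison, the analytic-continuation step is standard once one works with the irreducible component $Y_0$.
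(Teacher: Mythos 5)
Your proof is correct and follows essentially the same route as the paper: the Legendre relation gives $\Gamma \subset C_0^{*\iota}\times\Theta^\Lambda$, André's Normal Monodromy Theorem identifies the connected algebraic monodromy group as $\gSL_2^\Lambda$, and the analytic-continuation/monodromy-orbit argument you spell out is exactly the step the paper delegates to Papas (7.2.1). The only minor variation is that you rule out trivial projections of the monodromy group using the nontrivial unipotent local monodromy at $s_1$ (multiplicative degeneration), whereas the paper uses non-isotriviality of the families $\cA_\lambda^*$; both are valid.
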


\begin{proof}
The Legendre period relation states that $\Gamma$ is contained in $C_0^{*\iota}\times\Theta^\Lambda$ (cf.\ equation~\eqref{eqn:legendre-relation}). To deduce the reverse inclusion let $s\in V$ be a Hodge generic point and let $\rho:\pi_1(C_0^{*\iota}(\CC),s)\to\gSL_{2}(\QQ)^\Lambda$ denote the monodromy representation on $R_1 \pi^\iota_{\cA*}(\QQ) = \bigoplus_{\lambda \in \Lambda} R_1 \pi^\iota_{\lambda*}(\QQ)$ at $s$ (recall that the monodromy action preserves the polarization).
Let $\gH$ denote the $\QQ$-Zariski closure of the image of $\rho$.
By Andr\'e's Normal Monodromy Theorem \cite[Theorem~1]{And92}, the neutral component $\gH^\circ$ is a normal subgroup of $\gSL_2^\Lambda$. The connected normal subgroups of $\gSL_2$ are $\{1\}$ and $\gSL_2$ itself. Since the $\rho_i$ are non-constant, $\cA_\lambda^*$ are not isotrivial and so the projections $\gH^\circ\to\gSL_2$ cannot be trivial.
It follows immediately that $\gH^\circ=\gSL_2^\Lambda$.
Since $\gH \subset \gSL_2^\Lambda$, it follows that $\gH=\gH^\circ$.
We now argue exactly as in \cite[7.2.1]{Papas}.
\end{proof}

Let $\cP' \colon V \to \AAA^{2\times \Lambda}(\CC)$ denote the composition of $\cP$ with the projection $\gM_2(\CC)^\Lambda\to\AAA^{2\times \Lambda}(\CC)$ on to the first column in each factor. The following is immediate from Lemma \ref{CZar}: 

\begin{corollary}\label{QZar}
The graph $\Gamma'$ of $\cP'$ is $\CC$-Zariski dense (and hence $\ov\QQ$-Zariski dense) in $C_0^\iota\times \AAA^{2\times m}$.
\end{corollary}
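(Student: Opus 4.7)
The plan is to deduce the corollary formally from Lemma \ref{CZar} by pushing the closure forward along the projection onto first columns, and then observing that $C_0^*$ is dense in $C_0$. Concretely, I would introduce $\pi \colon \gM_2 \to \AAA^2$, the morphism sending a matrix to its first column, and the induced morphism $\Pi = \id \times \pi^\Lambda \colon C_0^{*\iota} \times \gM_2^\Lambda \to C_0^\iota \times \AAA^{2 \times m}$ (composed with the open immersion $C_0^{*\iota} \hookrightarrow C_0^\iota$). By the definition of $\cP'$, we have $\Pi(\Gamma) = \Gamma'$. The standard fact I would invoke is that if $Z \subset X$ is Zariski dense and $\Pi \colon X \to Y$ is a morphism of varieties, then $\Pi(Z)$ is Zariski dense in the Zariski closure of $\Pi(X)$ (any regular function vanishing on $\Pi(Z)$ pulls back to a regular function vanishing on $Z$, hence on $X$). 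Applied to $Z = \Gamma$ and $X = C_0^{*\iota} \times \Theta^\Lambda$—which is the Zariski closure of $\Gamma$ by Lemma \ref{CZar}—this reduces the corollary to computing the Zariski closure of $\Pi(C_0^{*\iota} \times \Theta^\Lambda)$ in $C_0^\iota \times \AAA^{2 \times m}$.

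For the remaining step, the key observation is the elementary calculation that $\pi(\Theta) = \AAA^2 \setminus \{0\}$: for any first column $(a,c) \neq (0,0)$ the equation $ad - bc = (2\pi i)^{-1}$ in the unknowns $(b,d)$ is solvable, while no matrix with first column $(0,0)$ lies in $\Theta$. Since $\AAA^2 \setminus \{0\}$ is Zariski dense in $\AAA^2$, taking a product over $\Lambda$ shows $\pi^\Lambda(\Theta^\Lambda)$ is Zariski dense in $\AAA^{2 \times m}$. Combined with the fact that the non-empty Zariski open subset $C_0^*$ is dense in the irreducible curve $C_0$, this yields density of $\Pi(C_0^{*\iota} \times \Theta^\Lambda)$ in $C_0^\iota \times \AAA^{2 \times m}$, proving the first assertion. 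The parenthetical statement that $\CC$-Zariski density implies $\ov\QQ$-Zariski density is then immediate, since any proper $\ov\QQ$-closed subvariety containing $\Gamma'$ would base change to a proper $\CC$-closed subvariety also containing $\Gamma'$. Since Lemma \ref{CZar}—powered by André's Normal Monodromy Theorem—has already done all the substantive work, there is no genuine obstacle here; the corollary is a formal consequence via this fibrewise projection argument.
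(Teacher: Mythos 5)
Your argument is correct and is exactly the filling-in of the paper's one-line claim that the corollary is ``immediate from Lemma~\ref{CZar}'': project factorwise onto first columns, note $\pi(\Theta)=\AAA^2\setminus\{0\}$ is dense in $\AAA^2$ and $C_0^{*\iota}$ is dense in $C_0^\iota$, and use that a morphism sends a dense subset to a dense subset of the closure of the image. No discrepancy with the paper's (implicit) route.
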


\begin{proposition}\label{prop:trivial}
There is no non-zero polynomial $P \in \Qbar[X][Y_{\lambda}, Z_{\lambda} : \lambda \in \Lambda]$ such that $P(X)(F_{\lambda}(X), G_{\lambda}(X) : \lambda \in \Lambda) = 0$ in $\powerseries{\Qbar}{X}$.
\end{proposition}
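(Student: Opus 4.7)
The plan is to assume for contradiction that a non-zero $P \in \Qbar[X][Y_\lambda, Z_\lambda : \lambda \in \Lambda]$ satisfies the given functional identity, and then deduce via the period interpretation of $F_\lambda, G_\lambda$ together with \cref{QZar} that $P$ must vanish identically. Only the archimedean embedding~$\iota$ is needed here, since all power series appearing are G-functions and in particular converge on a complex disc around the origin.

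Fix the archimedean embedding $\iota \colon K \to \CC$ and extend it arbitrarily to $\Qbar \to \CC$. The assumed identity, after applying $\iota$, reads $P^\iota(X)(F_\lambda^\iota(X), G_\lambda^\iota(X)) = 0$ as power series, hence as holomorphic functions on the common disc of convergence around~$0$. Let $W$ be the non-empty open subset of~$V$ on which $x^\iota$ takes values in that disc. Substituting $X = x^\iota(s)$ and applying \eqref{eqn:Fli-period} together with its analogue $\cP_{\lambda,2,1}(s) = G_\lambda^\iota(x^\iota(s))$ yields
\[ P^\iota(x^\iota(s))(\cP_{\lambda,1,1}(s), \cP_{\lambda,2,1}(s) : \lambda \in \Lambda) = 0 \quad \text{for all } s \in W. \]
The left-hand side is a polynomial expression in the functions $x^\iota, \cP_{\lambda,1,1}, \cP_{\lambda,2,1}$, all of which are holomorphic on the connected complex manifold~$V$. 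Since it vanishes on the non-empty open subset $W \subset V$, it vanishes on all of $V$ by analytic continuation.

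Consequently, the graph $\Gamma' \subset C_0^{*\iota} \times \AAA^{2 \times \Lambda}$ is contained in the zero locus of the regular function $\tilde P(s, Y_\lambda, Z_\lambda) := P^\iota(x^\iota(s))(Y_\lambda, Z_\lambda)$ on $C_0^{*\iota} \times \AAA^{2 \times \Lambda}$. By \cref{QZar}, $\Gamma'$ is $\CC$-Zariski dense in this ambient variety, so $\tilde P$ vanishes identically. Thus $P^\iota(x^\iota(s))(Y_\lambda, Z_\lambda) = 0$ for every $s \in C_0^{*\iota}$ and every $(Y_\lambda, Z_\lambda)$. Since $x^\iota$ is non-constant, its image on $C_0^{*\iota}$ is an infinite subset of~$\CC$, so the coefficients of $P^\iota$ as a polynomial in~$X$ each vanish at infinitely many points, forcing $P^\iota = 0$. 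As $\iota$ is injective on $\Qbar$, we conclude $P = 0$, a contradiction.

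The substantive work is already packaged into \cref{QZar}, whose proof invokes André's Normal Monodromy Theorem; the present proposition is essentially a formal consequence. The only step requiring any care is the analytic continuation, which crucially exploits that $\cP_{\lambda,1,1}$ and $\cP_{\lambda,2,1}$ are globally defined holomorphic functions on the connected set~$V$, so that local vanishing on~$W$ propagates to global vanishing on~$V$ without any further analysis of radii of convergence.
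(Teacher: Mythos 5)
Your proof is correct and follows essentially the same route as the paper: interpret $F_\lambda, G_\lambda$ via \eqref{eqn:Fli-period} as the period functions $\cP_{\lambda,\cdot,1}$, deduce that $\Gamma'$ lies in the zero locus of the pullback of $P$ along $x$, and invoke \cref{QZar} together with the non-constancy (dominance) of $x$ to force $P=0$. The only cosmetic differences are that the paper works on $C_0'$, the complement of the poles of $x$, so that $\tilde P$ is genuinely regular (your $\tilde P$ should likewise be read on that open set, where $\Gamma'$ still lies and is still Zariski dense), and that your analytic-continuation step from $W$ to $V$ is harmless but unnecessary, since \eqref{eqn:Fli-period} and convergence on $D(0,r_\iota,\CC)$ already give the vanishing on all of $V$.
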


\begin{proof}
Let $C_0'$ denote the complement in $C_0$ of the poles of~$x$.

By \eqref{eqn:Fli-period}, we have the following commutative diagram:
\begin{center}
  \begin{tikzcd}
 V \arrow[r, "\cP'=(\cP_{\lambda,j,1})"] \arrow[d, "x^{\iota}"'] & \AAA^{2\times \Lambda}(\CC) \\
D(0,r_\iota,\CC) \arrow[ru, "\mathcal{G}=(g_{\lambda,j})"']              &     
\end{tikzcd}  
\end{center}
where $\lambda$ runs through~$\Lambda$, $j$ runs through $\{1,2\}$, and $g_{\lambda,1} = F^\iota_\lambda$, $g_{\lambda,2} = G^\iota_\lambda$.
Let $P$ be as in the statement of the proposition (that is, $P(X)(g_{\lambda,j} : \lambda \in \Lambda, j \in \{1,2\})=0$). Then the graph of $\mathcal{G}=(g_{\lambda,j})$ is contained in the $\CC$-points of the subvariety $V(P)$ of $\AAA^1\times\AAA^{2\times \Lambda}$ defined by $P$.
It follows that $\Gamma'$ is contained in the subvariety of $C_0' \times \AAA^{2 \times \Lambda}$ defined by $\tilde{P} = P \circ (x \times \id_{\AAA^{2 \times \Lambda}}) \in \Qbar[C_0' \times \AAA^{2 \times \Lambda}]$.
We conclude by \cref{QZar} that $\tilde{P}=0$.
Since $x$ is dominant, it follows that $P=0$.
\end{proof}

\begin{corollary} \label{cor:trivial-with-H}
There is no non-zero polynomial $P \in \Qbar[X][Y_{\lambda}, Z_{\lambda} : \lambda \in \Lambda; W]$, homogeneous in $Y_\lambda, Z_\lambda$ and~$W$, such that $P(X)(F_{\lambda}(X), G_{\lambda}(X) : \lambda \in \Lambda; H) = 0$ in $\powerseries{\Qbar}{X}$.
\end{corollary}

\begin{proof}
Suppose that such a~$P$ existed.
By substituting the rational function $H \in \Qbar(X)$ for the indeterminate~$W$, and multiplying by a non-zero element of~$\Qbar[X]$ to clear denominators, we would obtain a polynomial $P' \in \Qbar[X][Y_\lambda, Z_\lambda : \lambda \in \Lambda]$ such that $P'(X)(F_\lambda, G_\lambda : \lambda \in \Lambda) = 0$.
The homogeneity of~$P$ guarantees that $P' \neq 0$, although $P'$ might no longer be homogeneous (in $Y_\lambda$ and~$Z_\lambda$).
The existence of such~$P'$ contradicts \cref{prop:trivial}.
\end{proof}

\subsection{Proof of main theorems} \label{subsec:main-proofs}

\begin{proposition} \label{andre-bound2}
For every $\epsilon > 0$, there exist constants $\newC{andre-bound2-mult}$ and $\newC{andre-bound2-exp}$ such that the following holds:
Let $M$, $N$ be positive integers.
Let $s \in C_0^*(\ov\bQ)$ be a point such that there exist $i_1, i_2, i_3, i_4 \in \{ 1, \dotsc, n \}$ such that $i_1 \neq i_2$, $i_3 \neq i_4$, $\{i_1,i_2\} \neq \{i_3,i_4\}$, and:
\begin{enumerate}[(i)]
\item there exists an isogeny $\cA_{i_1,s} \to \cA_{i_2,s}$ of degree~$M$;
\item there exists an isogeny $\cA_{i_3,s} \to \cA_{i_4,s}$ of degree~$N$.
\end{enumerate}
Then
\[ h(x(s)) < \refC{andre-bound2-mult} \max \{ [K(s):K]^{\refC{andre-bound2-exp}}, M^\epsilon \}. \]
\end{proposition}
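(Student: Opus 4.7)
The plan is to combine the global relation produced by \cref{global-relation-applied} with the absence of functional relations given by \cref{prop:trivial}, then feed the result into André's theorem \cref{hasse-principle-E}. The main work has already been packaged into those three inputs; what remains is essentially bookkeeping of degrees and an application of the divisor bound \eqref{eqn:dN-bound}.

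First I would verify that $\xi := x(s)$ is a non-zero algebraic number. Since $\sigma_1,\dotsc,\sigma_\ell$ permute the set $\{s_1,\dotsc,s_\ell\}$ (see \cref{sigma-ell}) and each $\rho_\lambda$ vanishes on this set, every $s_k$ fails to lie in $C_0^*$; hence $s \in C_0^*$ forces $s \notin \{s_1,\dotsc,s_\ell\}$, and in particular $x(s) \neq 0$. Next, apply \cref{global-relation-applied} to $s$ (using hypotheses (i), (ii) and the indices $i_1,i_2,i_3,i_4$) to produce a non-zero homogeneous polynomial $Q$, not involving the auxiliary $H$, that is a global relation between the evaluations of $\mathcal{G}' = \{F_\lambda, G_\lambda : \lambda\in\Lambda\}\cup\{H\}$ at $\xi$, of degree
\[ \deg(Q) \leq 288\,\ell\,[K(s):\QQ] + 2\,\ell\,d(MT^2). \]

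The second step is to check that $Q$ is a non-trivial relation in the sense required by \cref{hasse-principle-E}. By \cref{prop:trivial}, there are no non-zero functional relations between the $F_\lambda, G_\lambda$, so $Q$ is non-trivial as a relation between $\{F_\lambda, G_\lambda\}$. To promote this to non-triviality as a relation between the larger set $\mathcal{G}'$, I would invoke \cref{trivial-relations-extra-variables}: outside a finite exceptional set $W \subset \Qbar$ depending only on $\mathcal{G}'$, non-triviality is preserved when extra G-functions are added. Points $s$ with $x(s) \in W$ contribute only finitely many bounded heights, which can be absorbed into the constant $\refC{andre-bound2-mult}$.

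Next, apply \cref{hasse-principle-E} to the collection $\mathcal{G}'$ (which is a set of G-functions by \cref{G-functions} and the fact that $H = \zeta/(\zeta - X)$ is visibly a G-function). This yields constants $\newC*, \newC*$, depending only on $\mathcal{G}'$, such that
\[ h(\xi) \leq \refC{*}\deg(Q)^{\refC{*}} \ll \bigl( [K(s):\QQ] + d(MT^2) \bigr)^{\refC{*}} \ll [K(s):K]^{\refC{*}} + d(MT^2)^{\refC{*}}. \]
Finally, by \eqref{eqn:dN-bound} applied with $\epsilon/\refC{*}$ in place of $\epsilon$ (and since $T$ is a fixed constant depending only on $C$), we have $d(MT^2)^{\refC{*}} \ll_\epsilon M^\epsilon$. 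Combining these bounds and using $\max\{a,b\} \geq (a+b)/2$ gives the required estimate $h(x(s)) < \refC{andre-bound2-mult}\max\{[K(s):K]^{\refC{andre-bound2-exp}}, M^\epsilon\}$.

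The only conceptually delicate point is step two: ensuring that the relation $Q$, obtained by multiplying together the local relations of \cref{global-relation}, is genuinely non-trivial once all of $\mathcal{G}'$ is taken into account. This is where the crucial input from \cref{prop:trivial} (monodromy is full, hence no functional relations exist between the period G-functions) and the technical \cref{trivial-relations-extra-variables} both come in. Everything else is a matter of tracking constants and applying the divisor bound.
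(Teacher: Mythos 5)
Your proposal is correct and follows essentially the same route as the paper's proof: apply \cref{global-relation-applied} to produce the global relation $Q$, use \cref{prop:trivial} together with \cref{trivial-relations-extra-variables} to see it is non-trivial as a relation between the elements of $\mathcal{G}'$, then conclude via \cref{hasse-principle-E} and the divisor bound \eqref{eqn:dN-bound}. Your extra checks (that $x(s)\neq 0$ for $s\in C_0^*$, that $H$ is a G-function, and the absorption of the finite exceptional set $W$ into the constants) are sound refinements of details the paper leaves implicit.
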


\begin{proof}
Let $s$ be a point satisfying the hypotheses of the proposition.
By \cref{global-relation-applied}, there is a non-zero global relation~$Q$ between the evaluations at~$x(s)$ of the elements of~$\mathcal{G}'$.
By \cref{cor:trivial-with-H}, this relation is non-trivial.
By \cref{hasse-principle-E}, we have
\[ h(x(s)) \leq \refC{bombieri-mult} \deg(Q)^{\refC{bombieri-exp}} \]
where the constants $\newC{bombieri-mult}$, $\newC{bombieri-exp}$ do not depend on $s$ or $\epsilon$.
From \cref{global-relation-applied} and \eqref{eqn:dN-bound}, we have
\[ \deg(Q) \leq 288\ell[K(s):\QQ] + 2\ell d(MT^2) \leq \refC{relation-degree-mult} \max\{ [K(s):K], M^{\epsilon/\refC{bombieri-exp}} \} \]
where the constant $\newC{relation-degree-mult}$ depends on $\epsilon$ but not on~$s$.
Putting these together proves the proposition.
\end{proof}

\begin{remark} \label{rmk:andre-bound2-effectivity}
For the purposes of effectivity, we note that \cref{andre-bound2} can be proved using \cref{hasse-principle} directly instead of \cref{hasse-principle-E}.
Indeed, the elements of~$\cG'$ satisfy a system of differential equations of the form~\eqref{eqn:diff} because, for each~$\lambda$, $F_\lambda$ and $G_\lambda$ satisfy a differential equation
\[ \frac{d}{dX}
\begin{pmatrix} F_\lambda \\ G_\lambda \end{pmatrix}
=
\Gamma_\lambda
\begin{pmatrix} F_\lambda \\ G_\lambda \end{pmatrix}
\]
for some matrix $\Gamma_\lambda \in \rM_2(\Qbar(X))$ because $\omega_\lambda$ and~$\eta_\lambda$ form a $\Qbar(S)$-basis for the $\nabla_{d/dx}$-module $H^1_{DR}(\cA_{\lambda,\Qbar(C_0)}/\Qbar(C_0))$,
and $H$ is a rational function so satisfies a homogeneous $\Qbar(X)$-linear differential equation of order~$1$.
\end{remark}

\begin{proposition} \label{galois-bound2}
There exist constants $\newC{galois-bound2-mult}$ and $\newC{galois-bound2-exp}$ such that the following holds:
Let $M$, $N$ be positive integers.
Let $s \in C_0^*(\ov\bQ)$ be a point such that there exist $i_1, i_2, i_3, i_4 \in \{ 1, \dotsc, n \}$ such that $i_1 \neq i_2$, $i_3 \neq i_4$, $\{i_1,i_2\} \neq \{i_3,i_4\}$, and:
\begin{enumerate}[(i)]
\item there exists a cyclic isogeny $\cA_{i_1,s} \to \cA_{i_2,s}$ of degree~$M$;
\item there exists a cyclic isogeny $\cA_{i_3,s} \to \cA_{i_4,s}$ of degree~$N$;
\item $\cA_{i_1,s}$ and $\cA_{i_3,s}$ do not have CM.
\end{enumerate}
Then
\[ [K(s):K] > \refC{galois-bound2-mult} \max\{M,N\}^{\refC{galois-bound2-exp}}. \]
\end{proposition}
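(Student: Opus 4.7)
The plan is to deduce Proposition \ref{galois-bound2} from the height bound of Proposition \ref{andre-bound2} by combining it with Masser--W\"ustholz isogeny estimates, following the strategy by which \cite[Lemma~4.2]{HP12} is derived from \eqref{eqn:HP-height-bound}.

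First, I would apply Proposition \ref{andre-bound2} with some $\epsilon > 0$ to be fixed later, obtaining
\[ h(x(s)) \leq c_0 \max\{ [K(s):K]^{c_0'}, M^\epsilon \}. \]
Because the hypotheses of that proposition are symmetric in the pairs $(i_1,i_2)$ and $(i_3,i_4)$, swapping their roles yields the analogous bound with $M$ replaced by $N$. Next, I would translate this into an estimate on Faltings heights. By construction $\cA_{i,s} = \cE_{\rho_i(s)}$, so its $j$-invariant is $1/\rho_i(s)$, giving $h(j(\cA_{i,s})) = h(\rho_i(s))$. Since $\rho_i$ and $x$ are both non-constant morphisms $C_4 \to \PP^1$ from a smooth projective curve, standard comparison of heights attached to their pullback line bundles yields $h(\rho_i(s)) \leq c_1 h(x(s)) + c_1'$, and the Faltings--Silverman inequality for non-CM elliptic curves then gives $h_F(\cA_{i_j,s}) \leq c_2 h(x(s)) + c_2'$ for each $j=1,2,3,4$.

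Now hypothesis (iii) asserts that $\cA_{i_1,s}$ is non-CM, so $\Hom(\cA_{i_1,s}, \cA_{i_2,s})$ is a free $\ZZ$-module of rank one; if $\phi_0$ is a generator, then for $n \geq 2$ the isogeny $n\phi_0$ has non-cyclic kernel (containing $\cA_{i_1,s}[n]$), so the given cyclic isogeny of degree $M$ must be $\pm\phi_0$, and $M$ is the minimal degree of any isogeny $\cA_{i_1,s} \to \cA_{i_2,s}$. The Masser--W\"ustholz isogeny theorem applied over $K(s)$ then produces absolute constants $c_3, \kappa > 0$ such that
\[ M \leq c_3 \bigl([K(s):K](h(x(s))+1)\bigr)^\kappa, \]
and an identical argument, using the non-CM hypothesis on $\cA_{i_3,s}$, bounds $N$ in the same way. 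Substituting the height bound into both inequalities and choosing $\epsilon$ small enough that $\epsilon\kappa < 1$, routine rearrangement yields $\max\{M,N\} \leq c_4 [K(s):K]^{c_5}$, equivalent to the conclusion of the proposition.

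The main obstacle has in fact already been overcome by Proposition \ref{andre-bound2}; the present deduction closely mirrors \cite[Lemma~4.2]{HP12}. The one subtlety worth emphasising is how cyclicity together with the non-CM hypothesis forces the given degree-$M$ isogeny to realise the \emph{minimal} isogeny degree, so that the Masser--W\"ustholz bound controls $M$ itself rather than merely some potentially much smaller divisor of $M$.
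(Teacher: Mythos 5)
Your proposal is correct and follows essentially the same route as the paper's own proof: combine the height bound of \cref{andre-bound2} with an effective Masser--W\"ustholz-type isogeny estimate (the paper cites Gaudron--R\'emond), compare $h_F(\cA_{i_1,s})$ to $h(x(s))$ via Silverman's Faltings/modular height comparison and the height machine, and use cyclicity plus the non-CM hypothesis to identify $M$ with the minimal isogeny degree so the isogeny bound controls $M$ itself. The only differences are cosmetic (you treat $M$ and $N$ symmetrically where the paper assumes $M \geq N$ without loss of generality), and your emphasis on the minimality point is exactly the step the paper also relies on.
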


\begin{proof}
This argument is essentially the one appearing in the proof of \cite[Lemma 4.2]{HP12}.
Throughout this proof, the notation $c_k$ will denote suitably chosen positive constants which depend on the curve~$C$ but not on $M$, $N$ or~$s$.

By swapping $i_1,i_2$ with $i_3,i_4$ if necessary, we may assume without loss of generality that $M \geq N$.

By \cite[Thm.~1.4]{GR14EC} (an effective version of the main theorem of \cite{MW90}), there exists an isogeny $\cA_{i_1,s}\to\cA_{i_2,s}$ of degree
\[ M' \leq \newC* \max\{ [K(s):K], h_F(\cA_{i_1,s}) \}^{\newC*},\]
where $h_F(\cdot)$ denotes the semistable Faltings height.
By Silverman's comparison estimate between the Faltings and modular heights of elliptic curves \cite[Prop.~2.1]{SilHEC}, we have
\[ \abs{h(1/\rho_{i_1}(s))-12h_F(\cA_{i_1,s})} \leq \newC{faltings-mult} \log\max\{ 2, h(1/\rho_{i_1}(s)) \}. \]
By the standard height machine \cite[Theorem~B.3.2]{HS00}, bearing in mind that $\operatorname{NS}(\ov{C}) \cong \ZZ$, we have
\[ \abs{h(x(s)) - \newC* h(1/\rho_{i_1}(s))} \leq \newC* h(x(s)). \]

Combining the above inequalities, we obtain
\[ M' \leq \newC* \max \{ [K(s):K], h(x(s)) \}^{\refC{isog-M-exp2}}. \]
Applying \cref{andre-bound2} with $\epsilon = 1/2\newC{isog-M-exp2}$, we get
\[ M' \leq \newC* \max \{ [K(s):K]^{\newC*}, M^{1/2} \}. \]
Since $\cA_{i_1,s}$ does not have CM, the $\ZZ$-module of isogenies $\cA_{i_1,s}\to\cA_{i_2,s}$ is generated by any cyclic isogeny. Therefore, $M\leq M'$ and the proposition follows.
\end{proof}

To deduce \cref{galois-bound}, it suffices to note that if $s \in C(\ov\QQ) \setminus \Sigma$ satisfies $\Phi_M(s_{i_1}, s_{i_2}) = \Phi_N(s_{i_3}, s_{i_4}) = 0$, then we can choose $\tilde{s} \in C_4(\ov\QQ)$ with $\nu(\tilde{s}) = s$.  After removing finitely many points from $C$, we may assume that $\tilde{s} \in C_0(\ov\QQ)$.
Now $\Phi_M(s_{i_1}, s_{i_2})$ implies that there is a cyclic isogeny $\cE_{1/s_{i_1}} \to \cE_{1/s_{i_2}}$ of degree~$M$.
We have
\[ \cA_{i_1,\tilde{s}} \cong \cE_{\rho_{i_1}(\tilde{s})} = \cE_{1/s_{i_1}} \]
and similarly for $i_2$ and for $i_3$, $i_4$.
If $\cA_{i_1,\tilde{s}}$ has CM, then so does $\cA_{i_2,\tilde{s}}$ (because they are isogenous), contradicting the assumption that $s \not\in \Sigma$.
Similarly $\cA_{i_3,\tilde{s}}$ does not have CM.
Thus $\tilde{s}$, $i_1$, $i_2$, $i_3$, $i_4$ satisfy the conditions of \cref{galois-bound2}.
Since $[\QQ(s):\QQ]/[K(\tilde{s}):K]$ is bounded below by a positive constant independent of~$s$, the bound in \cref{galois-bound2} implies the bound in \cref{galois-bound}.

\subsection{Proof of Zilber--Pink for lines in \texorpdfstring{$Y(1)^n$}{Y1n}}

The following proposition involves nothing new beyond the ideas about ``modular Mordell-Lang'' from \cite{HP12}.

\begin{proposition} \label{ZP-non-const-proj}
Let $C$ be an irreducible curve in $Y(1)^n$ which is defined over $\Qbar$.
Let $I \subset \{1,\dotsc,n\}$ denote the set of indices for which the $i$-th coordinate is non-constant on~$C$ and let $\ell = \# I$.
Let $p \colon Y(1)^n \to Y(1)^\ell$ denote the projection onto the coordinates indexed by~$I$.

Let $\Xi$ denote the set of points $s \in Y(1)^n$ which are contained in some special subvariety of $Y(1)^n$ of codimension at least~$2$, but for which $p(s)$ is not contained in a special subvariety of $Y(1)^\ell$ of codimension at least~$2$.

If $C$ is not contained in a special subvariety of $Y(1)^n$ of positive codimension, then $C \cap \Xi$ is finite.
\end{proposition}

\begin{proof}
If $s \in \Xi$ lies in a special subvariety of~$Y(1)^n$ of codimension at least~$2$, but $p(s)$ does not lie in a special subvariety of~$Y(1)^\ell$ of codimension at least~$2$, then $s$ must satisfy at least one relation of type (i) or~(ii) (as stated at the beginning of section~\ref{subsec:bounds-intro}) involving at least one index which is not in~$I$.

If $s \in C$ satisfies a relation of type~(i) (a modular relation) involving two indices $i,j \not\in I$, or if it satisfies a relation of type~(ii) (a CM coordinate) involving an index $i \not\in I$, then $C$ is contained in a special subvariety of $Y(1)^n$ of positive codimension, contradicting the hypothesis of the proposition.

Thus each point in $s \in C \cap \Xi$ must satisfy a relation of type~(i) involving one index in~$I$ and one index not in~$I$, that is, a relation of the form
\[ \Phi_M(x_i(s), u_j) = 0 \]
where $i \in I$, $j \not\in I$, and $u_j \in Y(1)^n$ denotes the constant value of the coordinate $x_j$ on~$C$.
By the argument in the first paragraph of \cite[sec.~6]{HP12}, we have
\[ [\QQ(x_i(s)):\QQ] \geq \newC{U-special-mult}(u_j) M^{1/6}. \]
Since $u_j$ (for $j \not\in I$) are constant, this implies that $[\QQ(s):\QQ] \geq \newC* M^{1/6}$ for a suitable constant.
This is the analogue of \cref{galois-bound} for points~$s \in C \cap \Xi$.
The finiteness of such points follows by the usual Pila--Zannier method, as in \cite{HP12}.
\end{proof}

To prove \cref{ZP-lines}, let $C$ be a line in~$Y(1)^n$ defined over~$\Qbar$.
As in \cref{ZP-non-const-proj}, let $p \colon Y(1)^n \to Y(1)^\ell$ denote the projection onto the coordinates which are non-constant on~$C$.
Then $p(C) \subset Y(1)^\ell$ intersects infinity.
So applying \cref{mainZP} to~$p(C)$ establishes that there are only finitely many points~$s \in C$ for which $p(s)$ is contained in a special subvariety of~$Y(1)^\ell$ of codimension at least~$2$.
Combining this with \cref{ZP-non-const-proj} proves \cref{ZP-lines}.

\bibliographystyle{amsalpha}
\bibliography{ZP}

\providecommand{\noopsort}[1]{}
\providecommand{\bysame}{\leavevmode\hbox to3em{\hrulefill}\thinspace}
\providecommand{\MR}{\relax\ifhmode\unskip\space\fi MR }
\providecommand{\MRhref}[2]{%
  \href{http://www.ams.org/mathscinet-getitem?mr=#1}{#2}
}
\providecommand{\href}[2]{#2}
\begin{thebibliography}{FvdP04}

\bibitem[And89]{And89}
Y.~Andr\'{e}, \emph{{$G$}-functions and geometry}, Aspects of Mathematics, E13,
  Friedr. Vieweg \& Sohn, Braunschweig, 1989.

\bibitem[And92]{And92}
Y.~Andr\'e, \emph{{Mumford--Tate groups of mixed Hodge structures and the
  theorem of the fixed part}}, Compositio Math. \textbf{82} (1992), no.~1,
  1--24.

\bibitem[And95]{And95}
Y.~Andr\'{e}, \emph{Th\'{e}orie des motifs et interpr\'{e}tation
  g\'{e}om\'{e}trique des valeurs {$p$}-adiques de {$G$}-functions (une
  introduction)}, Number theory ({P}aris, 1992--1993), London Math. Soc.
  Lecture Note Ser., vol. 215, Cambridge Univ. Press, Cambridge, 1995,
  pp.~37--60.

\bibitem[BLR95]{BLR95}
S.~Bosch, W.~L\"{u}tkebohmert, and M.~Raynaud, \emph{Formal and rigid geometry.
  {III}. {T}he relative maximum principle}, Math. Ann. \textbf{302} (1995),
  no.~1, 1--29.

\bibitem[BM21]{BM:AO}
G.~Binyamini and D.~Masser, \emph{Effective {Andr\'e{\textendash}Oort} for
  non-compact curves in {Hilbert} modular varieties}, Comptes Rendus.
  Math\'ematique \textbf{359} (2021), no.~3, 313--321.

\bibitem[Bom81]{Bom81}
E.~Bombieri, \emph{On {$G$}-functions}, Recent progress in analytic number
  theory, {V}ol. 2 ({D}urham, 1979), Academic Press, London-New York, 1981,
  pp.~1--67.

\bibitem[BSY23]{BSY}
G.~Binyamini, H.~Schmidt, and A.~Yafaev, \emph{Lower bounds for {G}alois orbits
  of special points on {S}himura varieties: a point-counting approach}, Math.
  Ann. \textbf{385} (2023), no.~1-2, 961--973.

\bibitem[CLO15]{CLO15}
D.~A. Cox, J.~Little, and Do. O'Shea, \emph{Ideals, varieties, and algorithms},
  fourth ed., Undergraduate Texts in Mathematics, Springer, Cham, 2015.

\bibitem[Die57]{Die57}
P.~Dienes, \emph{The {T}aylor series: an introduction to the theory of
  functions of a complex variable}, Dover Publications, Inc., New York, 1957.

\bibitem[DO]{DOAg}
C.~Daw and M.~Orr, \emph{The large {Galois} orbits conjecture under
  multiplicative degeneration}, preprint, available at
  \href{https://arxiv.org/abs/2306.13463}{arXiv:2306.13463}.

\bibitem[DO21]{ExCM}
\bysame, \emph{Unlikely intersections with {$E\times{\rm CM}$} curves in
  {$\mathcal{A}_2$}}, Ann. Sc. Norm. Super. Pisa Cl. Sci. (5) \textbf{22}
  (\noopsort{2021a}2021), no.~4, 1705--1745.

\bibitem[DO22]{QRTUI}
\bysame, \emph{Quantitative reduction theory and unlikely intersections}, Int.
  Math. Res. Not. (2022), no.~20, 16138--16195.

\bibitem[FC90]{FC90}
G.~Faltings and C.-L. Chai, \emph{Degeneration of abelian varieties},
  Ergebnisse der Mathematik und ihrer Grenzgebiete (3), vol.~22,
  Springer-Verlag, Berlin, 1990, With an appendix by David Mumford.

\bibitem[FvdP04]{FvdP04}
J.~Fresnel and M.~van~der Put, \emph{Rigid analytic geometry and its
  applications}, Progress in Mathematics, vol. 218, Birkh\"{a}user Boston,
  Inc., Boston, MA, 2004.

\bibitem[GP23]{GP23}
W.~Gajda and S.~Petersen, \emph{Local to global principles for homomorphisms of
  abelian schemes}, Israel J. Math. \textbf{257} (2023), no.~1, 281--312.

\bibitem[GR14]{GR14EC}
\'{E}. Gaudron and G.~R\'{e}mond, \emph{Th\'{e}or\`eme des p\'{e}riodes et
  degr\'{e}s minimaux d'isog\'{e}nies}, Comment. Math. Helv. \textbf{89}
  (2014), no.~2, 343--403.

\bibitem[Hab10]{Hab10}
P.~Habegger, \emph{Weakly bounded height on modular curves}, Acta Math.
  Vietnam. \textbf{35} (2010), no.~1, 43--69.

\bibitem[HP12]{HP12}
P.~Habegger and J.~Pila, \emph{Some unlikely intersections beyond
  {Andr\'e--Oort}}, Compos. Math. \textbf{148} (2012), no.~1, 1--27.

\bibitem[HS00]{HS00}
M.~Hindry and J.~H. Silverman, \emph{Diophantine geometry}, Graduate Texts in
  Mathematics, vol. 201, Springer-Verlag, New York, 2000, An introduction.

\bibitem[HW08]{HardyWright}
G.~H. Hardy and E.~M. Wright, \emph{An introduction to the theory of numbers},
  sixth ed., Oxford University Press, Oxford, 2008, Revised by D. R.
  Heath-Brown and J. H. Silverman, With a foreword by A. Wiles.

\bibitem[K{\"u}h12]{Kuh12}
L.~K{\"u}hne, \emph{An effective result of {A}ndr\'{e}--{O}ort type}, Ann. of
  Math. (2) \textbf{176} (2012), no.~1, 651--671.

\bibitem[MW90]{MW90}
D.~Masser and G.~W\"{u}stholz, \emph{Estimating isogenies on elliptic curves},
  Invent. Math. \textbf{100} (1990), no.~1, 1--24.

\bibitem[Papa]{PapasII}
G.~Papas, \emph{{Effective Brauer-Siegel on some curves in $Y(1)^n$}},
  preprint, available at
  \href{https://arxiv.org/abs/2310.04943}{arXiv:2310.04943}.

\bibitem[Papb]{Papas}
\bysame, \emph{{Unlikely intersections in the Torelli locus and the G-functions
  method}}, preprint, available at
  \href{arxiv.org/abs/2201.11240v4}{arXiv:2201.11240}.

\bibitem[Pap24]{Papas24}
\bysame, \emph{Some cases of the {Z}ilber-{P}ink conjecture for curves in
  {$\mathcal A_g$}}, Int. Math. Res. Not. IMRN (2024), no.~5, 4160--4206.

\bibitem[PST]{PST+}
J.~Pila, A.~Shankar, and J~Tsimerman., \emph{{Canonical Heights on Shimura
  Varieties and the Andr\'e--Oort Conjecture}}, preprint, available at
  \href{arxiv.org/abs/2109.08788}{arXiv:2109.08788}.

\bibitem[Rob00]{Rob00}
A.~M. Robert, \emph{A course in {$p$}-adic analysis}, Graduate Texts in
  Mathematics, vol. 198, Springer-Verlag, New York, 2000.

\bibitem[Sil86]{SilHEC}
J.~H. Silverman, \emph{Heights and elliptic curves}, Arithmetic geometry
  ({S}torrs, {C}onn., 1984), Springer, New York, 1986, pp.~253--265.

\bibitem[Sil94]{SilATAEC}
\bysame, \emph{Advanced topics in the arithmetic of elliptic curves}, Graduate
  Texts in Mathematics, vol. 151, Springer-Verlag, New York, 1994.

\bibitem[Sil09]{SilAEC2ed}
\bysame, \emph{The arithmetic of elliptic curves}, second ed., Graduate Texts
  in Mathematics, vol. 106, Springer, Dordrecht, 2009.

\bibitem[{Sta}]{stacks}
The {Stacks Project Authors}, \emph{\textit{Stacks Project}},
  \url{https://stacks.math.columbia.edu}.

\bibitem[Tat95]{TateCurves}
J.~Tate, \emph{A review of non-{A}rchimedean elliptic functions}, Elliptic
  curves, modular forms, \& {F}ermat's last theorem ({H}ong {K}ong, 1993), Ser.
  Number Theory, I, Int. Press, Cambridge, MA, 1995, pp.~162--184.

\bibitem[Urb]{Urbanik}
D.~Urbanik, \emph{{Geometric G-functions and Atypicality}}, preprint, available
  at \url{https://arxiv.org/abs/2301.01857}.

\end{thebibliography}

\end{document}